\numberwithin{equation}{section} \DeclareMathSizes{2}{10}{12}{13}
\newtheorem{thm}{Proposition}[section]
\newtheorem{Thm}[thm]{Theorem}
\newtheorem{cor}[thm]{Corollary}
\newtheorem{lem}[thm]{Lemma}
\newtheorem{defn}[thm]{Definition}
\title{Weak comp algebras and cup products in secondary Hochschild cohomology of entwining structures}
\author{Mamta Balodi \footnote{Department of Mathematics, Indian Institute of Science, Bangalore, India. Email : mamta.balodi@gmail.com} $\qquad$ Abhishek Banerjee \footnote{Department of Mathematics, Indian Institute of Science, Bangalore, India. Email : abhishekbanerjee1313@gmail.com} \footnote{AB was partially supported by SERB Matrics fellowship MTR/2017/000112} $\qquad$ Anita Naolekar \footnote{Theoretical Statistics and Mathematics Unit,  Indian Statistical Institute, Bangalore, India. Email : anita.naolekar@gmail.com}}
\date{}
\begin{document}

\maketitle 

\medskip

\begin{abstract} We define the secondary Hochschild complex for an entwining structure over a commutative $k$-algebra $B$. We show that this complex
carries the structure of a weak comp algebra. We obtain two distinct cup product structures  for the secondary cohomology groups. We also consider a subcomplex
on which the two cup products coincide and which satisfies the axioms for being a comp algebra. The  cohomology of this subcomplex then forms a Gerstenhaber algebra. We also 
construct a bicomplex that controls the deformations of the entwining structure over $B$.
\end{abstract}

\medskip
MSC(2010) Subject Classification:   16E40, 16W30.

\medskip
Keywords : Secondary Hochschild cohomology, entwining structures, cup products, Gerstenhaber algebra

\section{Introduction}

\medskip
Let $k$ be a field and let $B$ be a commutative $k$-algebra. Let $A$ be a $k$-algebra and $\zeta: B\longrightarrow A$ be a morphism of $k$-algebras
such that the image of $\zeta$ lies in the center of $A$. In \cite{St1}, Staic introduced Hochschild cohomology groups for such a datum 
$(A,B,\zeta)$ with coefficients in an $A$-bimodule $M$, which he referred to as the ``secondary Hochschild cohomology'' $HH^\bullet ((A,B,\zeta);M)$. In \cite{St1}, it was shown that the secondary cohomology controls the deformations of a family of products $\{\mu_b:A\otimes A\longrightarrow A\}_{b\in B}$ on $A$ satisfying a generalized associativity condition.
In later papers (see Staic and Stancu \cite{St2}, Corrigan-Salter and Staic \cite{St3}, Laubacher, Staic and Stancu \cite{St4}), it was revealed that the secondary cohomology  groups carry a very rich structure, as a result of which they may be treated as an object of study in their  own right. In \cite{BBN}, we have studied Batalin-Vilkovisky operators on secondary  cohomology groups. The purpose of this paper is to introduce and study secondary Hochschild cohomology groups for an entwining structure over a commutative $k$-algebra $B$. 

\smallskip
 The concept of an entwining structure,  introduced by Brzezi\'{n}ski and Majid \cite{Brz1}, has been developed widely in the literature as a unifying formalism for studying multiple theories such as
relative Hopf modules, Doi-Hopf and Yetter-Drinfeld modules as well as coalgebra Galois extensions (see, for instance, \cite{Abu}, \cite{BBR},  \cite{Brz2.5}, \cite{Brz3}, \cite{BCT},   \cite{CaDe}, \cite{Jia}, \cite{Schbg}). An entwining structure $(A,C,\psi)$ consists of an algebra $A$, a coalgebra $C$ and a map $\psi: C\otimes A\longrightarrow A\otimes C$ satisfying certain conditions. In particular, such a datum
$(A,C,\psi)$ behaves in many ways like a bialgebra or more generally like a comodule algebra over a bialgebra and Brzezi\'{n}ski introduced a Hochschild type
cohomology theory for entwining structures in \cite{Brz2}. The fundamental idea that underlies Brzezi\'{n}ski's construction in \cite{Brz2} is his notion
of a weak comp algebra, introduced by generalizing comp algebras \cite{GS2} or pre-Lie systems \cite{G1}, \cite{GerSh} (see also the exposition in \cite{KPS}). This leads to the notion of not one but two natural cup product structures on the Hochschild complex of an entwning structure, both of which descend to the level of cohomology. 

\smallskip
Accordingly, we are motivated in this paper to study a secondary cohomology theory for entwining structures over a given commutative $k$-algebra $B$. This consists of an entwining structure $(A,C,\psi)$ along with a map $\zeta : B\longrightarrow A$ of algebras such that $\psi(c\otimes \zeta(b))=\zeta (b)\otimes c$ for every $b\in B$ and $c\in C$. We introduce a secondary Hochschild complex $\mathscr C^\bullet_\psi((A,B,C,\zeta);M)$ with coefficients in an $A$-bimodule $M$ satisfying certain conditions. When $M=A$, this leads to
a secondary Hochschild complex $\mathscr C^\bullet_\psi(A,B,C,\zeta)$   on which we study comp structures and cup products  in a manner similar to the complex of Brzezi\'{n}ski \cite{Brz2}.
 The cohomology groups of $\mathscr C^\bullet_\psi(A,B,C,\zeta)$  will be denoted by $HH_\psi^\bullet(A,B,C,\zeta)$.  We also combine $\mathscr C^\bullet_\psi((A,B,C,\zeta);M)$  with the Cartier complex of a coalgebra in a manner similar to \cite{Brz2} to obtain a bicomplex which controls the deformations of the entwining structure
 $(A,B,C,\psi,\zeta)$ over $B$.
 
 \smallskip We mention that the secondary Hochschild complex of Staic \cite{St1} admits a canonical inclusion into  $\mathscr C^\bullet_\psi((A,B,C,\zeta);M)$. This induces a morphism from the secondary cohomology of Staic \cite{St1} to the secondary cohomology of the entwining structure over $B$ with coefficients in $M$. Our first main result is as follows. 
 
\begin{Thm} (see Theorem \ref{Th3} and Corollary \ref{C3.8}) Let $B$ be a commutative $k$-algebra and let $(A,B,C,\psi,\zeta)$ be an entwining structure over $B$.  Then, 

\smallskip
(a) $\mathscr C_\psi^\bullet(A,B,C,\zeta)$ carries the structure of a weak comp algebra.

\smallskip
(b) There are cup products on the secondary Hochschild cohomology of the entwining structure
\begin{equation*}
\begin{array}{c}
\cup : HH^m_\psi(A,B,C,\zeta)\otimes HH_\psi^n(A,B,C,\zeta) \longrightarrow HH_\psi^{m+n}(A,B,C,\zeta) \\ \sqcup: HH^m_\psi(A,B,C,\zeta)\otimes HH_\psi^n(A,B,C,\zeta) \longrightarrow HH_\psi^{m+n}(A,B,C,\zeta)
\end{array}
\end{equation*} which are related as follows: for cohomology classes $\bar{f}\in HH^m_\psi(A,B,C,\zeta)$ and $\bar{g}\in HH^n_\psi(A,B,C,\zeta)$, we have
\begin{equation*}
\bar{f}\cup \bar{g}=(-1)^{mn}\bar{g}\sqcup \bar{f} 
\end{equation*}

\end{Thm}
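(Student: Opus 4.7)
The plan is to follow Brzezi\'{n}ski's framework from \cite{Brz2} for the Hochschild cohomology of entwining structures, now augmented to carry the secondary $B$-data and the compatibility $\psi(c\otimes\zeta(b))=\zeta(b)\otimes c$. For part (a), I would first define comp operations $\circ_i:\mathscr C^m_\psi\otimes\mathscr C^n_\psi\to\mathscr C^{m+n-1}_\psi$ for $1\le i\le m$ by substituting a cochain $g$ into the $i$-th $A$-slot of $f$, using $\psi$ to shuffle the relevant $C$-factors past the intervening $A$-factors, and redistributing the $B$-inputs (the condition on $\zeta$ guarantees this redistribution is unambiguous). I would then verify the weak comp axioms one by one: the pre-Lie-type associativity for $(f\circ_i g)\circ_j h$ (splitting into the cases $j<i$, $i\le j\le i+n-1$, and $j>i+n-1$), the existence of a distinguished element coming from the multiplication $\mu_A$, and the compatibility identity relating $\circ_i$ with the coboundary $\partial$ of $\mathscr C^\bullet_\psi(A,B,C,\zeta)$. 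Each of these reduces to a bookkeeping-heavy but direct computation, with the $B$-slots behaving essentially passively because $\psi$ fixes the image of $\zeta$.

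For part (b), once the weak comp structure is in place, the two cup products arise canonically from the general theory of weak comp algebras: for $f\in\mathscr C^m_\psi$ and $g\in\mathscr C^n_\psi$ one defines $f\cup g$ and $f\sqcup g$ as two different composites of $f$, $g$ and $\mu_A$, one obtained by inserting $g$ to the right of $f$ and the other by inserting $f$ to the left of $g$, with the $C$-factors rearranged through iterated applications of $\psi$. I would then check that both operations descend to $HH^\bullet_\psi(A,B,C,\zeta)$ by using the $\partial$-compatibility axiom from part (a), which forces $\partial(f\cup g)$ and $\partial(f\sqcup g)$ to vanish when $f$, $g$ are cocycles.

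To establish the skew-commutativity identity $\bar f\cup\bar g=(-1)^{mn}\bar g\sqcup\bar f$ I would construct an explicit cochain $h(f,g)\in\mathscr C^{m+n-1}_\psi$ as a signed sum $\sum_i(-1)^{\epsilon_i}f\circ_i g$ and compute $\partial h(f,g)$ directly via the weak comp axioms; after the telescoping, the result should equal $f\cup g-(-1)^{mn}g\sqcup f$ modulo correction terms proportional to $\partial f$ and $\partial g$, which vanish when $f$ and $g$ are cocycles. The hard part will be Step~1: establishing the weak comp axioms in the simultaneous presence of the entwining twist $\psi$ and the $B$-arguments requires tracking a large number of applications of $\psi$ in order to bring tensor factors into a canonical order, and verifying at each stage that the relevant diagrams close via the hexagon axioms of the entwining structure together with the centrality of $\zeta(B)$. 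Once this groundwork is laid, the cup product constructions and the commutativity identity in (b) follow by formal manipulation within the weak comp algebra formalism, in direct parallel with \cite{Brz2}.
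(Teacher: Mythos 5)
Your overall strategy (insertion operations built from the tensor-matrix formalism, with $\psi$ shuffling the coalgebra leg and the $B$-entries collapsed multiplicatively, followed by Brzezi\'{n}ski's weak comp algebra machinery for the cup products) is the same as the paper's. However, as written, your Step~1 contains a step that would fail: you propose to verify the pre-Lie associativity of $(f\circ_i g)\circ_j h$ in all three cases $j<i$, $i\le j<n+i$ and $j\ge n+i$. On the full complex $\mathscr C^\bullet_\psi(A,B,C,\zeta)$ only the middle case holds for arbitrary cochains (Lemma \ref{L3.2}). The disjoint-slot identities are false in general: commuting the output of an inserted cochain $h$ past the iterated entwining that carries the coalgebra leg to the slot of $g$ requires $h$ to intertwine the left and right coactions, i.e.\ precisely the equivariance condition \eqref{eqsub}, which a general cochain does not satisfy. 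This is exactly why the paper uses the notion of a \emph{weak} comp algebra: the $j<i$ identity is established only when one of the inserted cochains is the distinguished element $\alpha$ with $\alpha\bigl(c\otimes\begin{pmatrix}a_1 & b_{12}\\ 1 & a_2\end{pmatrix}\bigr)=\varepsilon(c)\zeta(b_{12})a_1a_2$ (Lemma \ref{L3.3}), where the counit removes the extra coalgebra leg and \eqref{eq2.2} handles the $B$-entry; note also that the distinguished element is not just $\mu_A$ but must absorb $\varepsilon$ and $\zeta$ as in \eqref{alpha}. The unrestricted comp identities hold only on the equivariant subcomplex $\mathscr E^\bullet_\psi(A,B,C,\zeta)$ (Lemmas \ref{L4.6} and \ref{L4.7}); indeed, if they held on all of $\mathscr C^\bullet_\psi$, the two cup products would coincide already at the cochain level, which is exactly what fails in general and is recovered only on $\mathscr E^\bullet_\psi$ in Theorem \ref{T4.3}. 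There is moreover no $j\ge n+i$ axiom in a weak comp algebra at all, so that case should be dropped.

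The rest of your plan survives once this is corrected, because the weak axioms are all that part (b) needs: the cup products are $(\alpha\Diamond_0 f)\Diamond_m g$ and $(\alpha\Diamond_1 g)\Diamond_0 f$ (Propositions \ref{P3.6} and \ref{P3.7}), the differential itself is expressed through $\alpha$ and the insertions (Lemma \ref{L3.5}), and in the homotopy computation of $\delta\bigl(\sum_i \pm\, f\Diamond_i g\bigr)$ every disjoint-slot commutation that actually occurs has $\alpha$ as one of its arguments, so axiom (3) of Definition \ref{D3.1} suffices; this is the content of the general results of Brzezi\'{n}ski cited in the paper (associativity and derivation property of $\cup$, $\sqcup$, and the relation $\bar f\cup\bar g=(-1)^{mn}\bar g\sqcup\bar f$ on cohomology), which the paper invokes rather than reproving. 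So you should restate your Step~1 as: verify the case $i\le j<n+i$ for arbitrary cochains, the case $j<i$ only when $g=\alpha$ or $h=\alpha$, and the identity $\alpha\Diamond_0\alpha=\alpha\Diamond_1\alpha$; with that restriction your argument matches the paper's.
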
  

As such, we consider a    subcomplex $\mathscr E^\bullet_\psi(A,B,C,\zeta)\subseteq \mathscr C^\bullet_\psi(A,B,C,\zeta)$ on which the two cup products coincide, inducing a graded commutative structure on the  cohomology groups. Moreover, we show that this graded commutative product forms part of a Gerstenhaber algebra structure on the cohomology 
of  $\mathscr E^\bullet_\psi(A,B,C,\zeta)$.

\begin{Thm} (see Theorem \ref{T4.3} and  Theorem \ref{thfn})   Let $B$ be a commutative $k$-algebra and let $(A,B,C,\psi,\zeta)$ be an entwining structure over $B$. Then,

(a) The cup products $\cup$ and $\sqcup$ on the complex  $\mathscr E_\psi^\bullet(A,B,C,\zeta)$ coincide.

\smallskip
(b)  $\mathscr E_\psi^\bullet(A,B,C,\zeta)$ is  a right comp algebra over $k$.
 
\smallskip
(c) The cohomology $(H^\bullet(\mathscr E_\psi^\bullet(A,B,C,\zeta)),\cup,[.,.])$ carries the structure of a Gerstenhaber algebra. 

\end{Thm}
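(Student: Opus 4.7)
The plan is to establish (a), (b), (c) in sequence, with (c) emerging as a formal consequence of (b) via the classical construction of a Gerstenhaber structure on the cohomology of a right comp algebra. By construction, the subcomplex $\mathscr E^\bullet_\psi(A,B,C,\zeta)\subseteq\mathscr C^\bullet_\psi(A,B,C,\zeta)$ should be cut out by compatibility conditions with $\psi$ that force the ``twisting'' obstructions distinguishing the two products to vanish. For (a), the substance is therefore to check that $\mathscr E^\bullet_\psi$ is stable under the Hochschild-type differential and under both $\cup$ and $\sqcup$, and then to observe from the explicit formulas that on this stable subspace the $\psi$-insertions that differentiated the two products become trivial, so $\cup$ and $\sqcup$ literally coincide. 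Combined with the skew-commutation relation $\bar f\cup\bar g=(-1)^{mn}\bar g\sqcup\bar f$ from Theorem \ref{Th3}, this at once yields graded commutativity of $\cup$ on $H^\bullet(\mathscr E_\psi^\bullet)$, which is the first pillar of the Gerstenhaber structure.

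For (b), the goal is to upgrade the weak comp algebra structure on $\mathscr C^\bullet_\psi$ inherited from Theorem \ref{Th3}(a) to a strict right comp algebra structure on $\mathscr E^\bullet_\psi$. A weak comp algebra in the sense of Brzezi\'nski \cite{Brz2} satisfies the pre-Lie identities $(f\circ_i g)\circ_j h=f\circ_i(g\circ_{j-i}h)$ (in the appropriate index range) only modulo correction terms generated by permutations of $\psi$ past the inserted cochains. I would first unwind the defining compatibility of cochains in $\mathscr E^\bullet_\psi$ with $\psi$, and then verify by a direct calculation, parallel in spirit to the proof of Theorem \ref{Th3}(a) but cleaner because fewer $\psi$-twists survive, that when $f,g,h$ all lie in $\mathscr E^\bullet_\psi$ these correction terms collapse, so the strict pre-Lie identity holds.

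For (c), I would invoke the classical theorem of Gerstenhaber (see \cite{G1}, \cite{GS2}, \cite{KPS}) producing a Gerstenhaber algebra on the cohomology of any right comp algebra equipped with a distinguished $2$-cocycle $\mu$ satisfying $\mu\circ_1\mu=\mu\circ_2\mu$ and inducing the given differential via $d(f)=\mu\circ f-(-1)^{|f|}f\circ\mu$. Here the role of $\mu$ is played by the multiplication on $A$, viewed through $\zeta$ as a $2$-cochain; one must first verify that $\mu$ lies in $\mathscr E^\bullet_\psi$ and induces the correct differential. Given this, the bracket
\[
[f,g]=\sum_i(-1)^{(i-1)(n-1)}f\circ_i g-(-1)^{(m-1)(n-1)}\sum_j(-1)^{(j-1)(m-1)}g\circ_j f
\]
for $f\in\mathscr E_\psi^m,\,g\in\mathscr E_\psi^n$ satisfies the graded Jacobi identity and the graded Leibniz rule with respect to $\cup$ by purely formal manipulations from the right comp algebra axioms of (b), yielding the Gerstenhaber structure.

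The principal obstacle will be step (b): pinpointing precisely the compatibility with $\psi$ which defines $\mathscr E^\bullet_\psi$, and then bookkeeping the combinatorics of $\psi$-twists and $B$-insertions in the operations $\circ_i$ so that all potential corrections to the pre-Lie identity cancel. Parts (a) and (c) are largely formal once the combinatorics in (b) have been controlled, with (c) in particular being an application of standard machinery.
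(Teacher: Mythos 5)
Your overall route --- cut out the equivariant subcomplex, show it is closed under the differential and the insertion operations $\Diamond_i$, use the equivariance to collapse the $\psi$-twists, and then feed the resulting comp algebra into the standard machinery of \cite{GS2} --- is the same as the paper's, and your plans for (a) and (c) match the proofs of Theorem \ref{T4.3}(3) and Theorem \ref{thfn}(2), including the need to check that the distinguished $2$-cochain $\alpha$ lies in $\mathscr E^2_\psi(A,B,C,\zeta)$ (Lemma \ref{L4.1}) and that it induces the differential (Lemma \ref{L3.5}).

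In (b), however, you have misidentified which axioms separate a weak comp algebra from a comp algebra, and as written your verification targets an identity that needs no proof while omitting the two that do. In Definition \ref{D3.1} (Brzezi\'{n}ski's weak comp algebra) the identity $(f\Diamond_i g)\Diamond_j h=f\Diamond_i(g\Diamond_{j-i}h)$ for $i\leq j<n+i$ already holds \emph{unrestrictedly} on all of $\mathscr C^\bullet_\psi(A,B,C,\zeta)$ --- this is Lemma \ref{L3.2}, with no correction terms and no equivariance hypothesis. What is ``weak'' about the structure is the disjoint-insertion commutation: the identity $(f\Diamond_i g)\Diamond_j h=(f\Diamond_j h)\Diamond_{i+p-1}g$ for $j<i$ is only assumed when $g=\alpha$ or $h=\alpha$, and the other disjoint case $(f\Diamond_ig)\Diamond_jh=(f\Diamond_{j-n+1}h)\Diamond_ig$ for $j\geq n+i$ (axiom (4) of Definition \ref{Def4.4}) is not assumed at all. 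So the actual content of (b) is to prove these two commutation identities for arbitrary $f,g,h\in\mathscr E^\bullet_\psi(A,B,C,\zeta)$, which is what the paper does in Lemmas \ref{L4.6} and \ref{L4.7}: one uses the defining condition \eqref{eqsub} to slide the $\psi$-twist produced by the inner insertion past the cochain inserted at the disjoint position. Your tool (collapsing $\psi$-twists via \eqref{eqsub}) is the right one, but it must be aimed at these identities; you also need the closure statement of Lemma \ref{L4.2} (that $f\Diamond_i g$ stays in $\mathscr E^\bullet_\psi$), both to make $\mathscr E^\bullet_\psi$ a subcomplex and to have the comp operations defined on it at all.
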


Finally, in Section 5, we come to the deformations of  the entwining structure
 $(A,B,C,\psi,\zeta)$ over $B$. A deformation $(\mu_t,\Delta_t,\psi_t)$ of  $(A,B,C,\psi,\zeta)$ over $B$ (see Definition \ref{Dinfdef}) will consist of three components:
 
 \smallskip
 (a) A deformation of a family of products on $A$ parametrized by elements of $B$ and  satisfying a generalized associativity condition. 
 
 \smallskip
 (b) A coassociative deformation of the coalgebra $C$.
 
 \smallskip
 (c)  A deformation of the entwining map $\psi$ satisfying certain compatibility conditions.
 
 \smallskip
 We consider on the one hand the secondary Hochschild complexes $\mathscr C^\bullet_\psi((A,B,C,\zeta);A\otimes C^{\otimes k})$ with coefficients in the $A$-bimodules
 $A\otimes C^{\otimes k}$. On the other hand, we consider the complexes determining the entwined cohomology of $C$ with coefficients in the $C$-bicomodules 
 $C\otimes A^{\otimes l}\otimes B^{\otimes \frac{l(l-1)}{2}}$. We combine them to form a bicomplex $C^{\bullet, \bullet}(A,B,C,\psi,\zeta)$ whose total cohomology is denoted by $H^\bullet(A,B,C,\psi,\zeta)$. Our final result is as follows.

\begin{Thm} (see Theorems \ref{MThinf} and \ref{obsthmi}) 
Let $(A,B,C,\psi,\zeta)$ be an entwining structure over $B$. 

\smallskip
(a) There is a one-to-one correspondence between equivalence classes of infinitesimal deformations of  $(A,B,C,\psi,\zeta)$ and the cohomology group $H^2(A,B,C,\psi,\zeta)$.

\smallskip
(b) Let $(\mu_t,\Delta_t,\psi_t)$ be a  deformation of    $(A,B,C,\psi,\zeta)$ modulo $t^{n+1}$. Then, $(\mu_t,\Delta_t,\psi_t)$ can be lifted to a deformation
modulo $t^{n+2}$ if and only if the obstruction is a coboundary, i.e.,
$
Obs^{n+1} =0\in H^3(A,B,C,\psi,\zeta)
$.

\end{Thm}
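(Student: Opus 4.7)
The plan is to follow the classical Gerstenhaber framework of formal deformation theory, adapted here to the tricomponent data $(\mu_t,\Delta_t,\psi_t)$ of a deformation of the entwining structure over $B$. I would proceed by expanding each of the three defining equations of a deformation in powers of $t$ and matching the coefficient relations against the differential of the total complex of the bicomplex $C^{\bullet,\bullet}(A,B,C,\psi,\zeta)$.

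For part (a), let $(\mu + t\mu_1,\,\Delta + t\Delta_1,\,\psi + t\psi_1)$ be an infinitesimal deformation. Expanding the generalized associativity of the $B$-parametrized family of products, the coassociativity of $\Delta_t$, and the entwining compatibility axioms of $\psi_t$ modulo $t^2$, the coefficients of $t$ yield three equations: $\mu_1$ is a $2$-cocycle of $\mathscr C^\bullet_\psi((A,B,C,\zeta);A)$, $\Delta_1$ is a Cartier $2$-cocycle with values in the appropriate $C$-bicomodule, and $\psi_1$ satisfies a mixed relation which is exactly the horizontal-vertical compatibility between the two edges of the bicomplex. Together, these three conditions are precisely the statement that $(\mu_1,\psi_1,\Delta_1)$ is a $2$-cocycle in the total complex. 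For the equivalence statement, I would consider a triple of $k[t]/(t^2)$-linear maps (on $A$, $C$, and implicitly on $\psi$) reducing to the identity modulo $t$, transform the deformation by this triple, and check by direct computation that the difference of the resulting infinitesimal cocycles is precisely the total coboundary of the order-$t$ part of this triple. This yields the bijection with $H^2(A,B,C,\psi,\zeta)$.

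For part (b), given a deformation modulo $t^{n+1}$, lifting to a deformation modulo $t^{n+2}$ amounts to finding data $(\mu_{n+1},\Delta_{n+1},\psi_{n+1})$ in degree $t^{n+1}$ that absorbs the failure of each of the three defining equations at that order. Collecting these failures, each of which is built from the lower-order data $\mu_i,\Delta_i,\psi_i$ ($i \leq n$) via the comp products coming from the weak comp algebra structure on $\mathscr C^\bullet_\psi(A,B,C,\zeta)$, the analogous operations on the Cartier side, and the entwining pairings that glue the two edges together, produces a triple $Obs^{n+1}$ of total degree $3$ in $C^{\bullet,\bullet}(A,B,C,\psi,\zeta)$. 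By the very definition of the differential of the total complex, a lift exists if and only if $Obs^{n+1}$ is a coboundary, equivalently if and only if its class in $H^3(A,B,C,\psi,\zeta)$ vanishes.

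The main technical obstacle is to verify that $Obs^{n+1}$ is automatically a $3$-cocycle, so that the obstruction class is well-defined. I expect this to follow by induction on $n$ from the weak comp algebra identities established in Theorem \ref{Th3}, together with the compatibility of these identities with the Cartier differential on the coalgebra side. Concretely, I would expand $d(Obs^{n+1})$ using the pre-Jacobi-type relations of the comp products; the terms either cancel in pairs by the graded symmetry of the comp structure, or they vanish because the deformation equations are assumed to hold modulo $t^{n+1}$. This is the standard Gerstenhaber maneuver, but carrying it out across all three components of the deformation simultaneously -- product, coproduct, and entwining -- and tracking the interaction of the horizontal and vertical differentials in the bicomplex, is where the bulk of the calculation lies.
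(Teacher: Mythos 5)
Your proposal is correct and follows essentially the same route as the paper: expand the defining equations of a deformation in powers of $t$, identify the order-by-order relations with the total differential of the bicomplex $C^{\bullet,\bullet}(A,B,C,\psi,\zeta)$, match infinitesimal deformations with total $2$-cocycles and equivalences (via the pair of isomorphisms on $A[[t]]$ and $C[[t]]$) with coboundaries, and read off the degree-three obstruction to lifting modulo $t^{n+2}$. The only cosmetic difference is that the paper delegates the verification $D\,Obs^{n+1}=0$ to standard deformation-theoretic arguments rather than spelling out the comp-algebra computation you sketch, and its degree-$2$ and degree-$3$ cocycle conditions split into four components (two of them mixed) since the entwining map satisfies two compatibility axioms.
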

 
 \smallskip

{\bf Acknowledgements:} We are grateful to Tomasz Brzezi\'{n}ski for useful discussions on the deformation complex of an entwining structure.

\smallskip

\section{Secondary Hochschild cohomology of an entwining structure}

\medskip
Let $k$ be a field. Throughout, we let $A$  be an algebra over $k$ having product structure $\mu:A\otimes A\longrightarrow A$ and unit map $u:k\longrightarrow A$. Also, we let
$C\ne 0$ be a $k$-coalgebra, with coproduct structure $\Delta: C\longrightarrow C\otimes C$ and counit map $\varepsilon : C\longrightarrow k$. We will generally suppress the summation in Sweedler notation and simply write $\Delta(c)=c_1\otimes c_2$ for any $c\in C$. For a map $\psi: C\otimes A\longrightarrow A\otimes C$, we will also suppress the summation and write $\psi(c\otimes a)=a_\psi\otimes c^\psi$ for any $a\in A$ and $c\in C$.

\smallskip We begin by recalling the notion of an entwining structure over $k$, which was introduced  by Brzezi\'{n}ski and Majid \cite{Brz1}. 

\begin{defn}\label{D2.1} Let $k$ be a field. An entwining structure over $k$ is a triple $(A,C,\psi)$ consisting of the following data

\smallskip
(1) An algebra $A$ over $k$.

\smallskip
(2) A coalgebra $C$ over $k$.

\smallskip
(3) A $k$-linear morphism $\psi: C\otimes A\longrightarrow A\otimes C$ satisfying the following conditions for any $a$, $b\in A$ and $c\in C$.
\begin{equation}\label{ent}
\begin{array}{c}
\psi(c\otimes \mu(a\otimes b))=\psi(c\otimes ab)=(ab)_\psi\otimes c^\psi = a_\psi b_\psi\otimes {c^\psi}^\psi=((\mu\otimes 1)\circ (1\otimes \psi)\circ (\psi\otimes 1))(c\otimes a\otimes b)\\
(1\otimes \Delta)(\psi (c\otimes a))=a_\psi\otimes \Delta(c^\psi)={a_{\psi}}_{\psi}\otimes c_1^\psi \otimes c_2^\psi =((\psi\otimes 1)\circ (1\otimes \psi))(\Delta(c)\otimes a) \\
a_\psi\varepsilon (c^\psi)=\varepsilon(c)a \qquad 1_\psi\otimes c^\psi = 1\otimes c\\
\end{array}
\end{equation}

\end{defn}

Let $B$ be a commutative $k$-algebra and let $\zeta: B\longrightarrow A$ be a morphism of $k$-algebras such that the image of $\zeta$ lies in the center $Z(A)$ of $A$, i.e., 
$\zeta(B)\subseteq Z(A)$. Let $M$ be an $A$-bimodule such that $\zeta(b)m=m\zeta(b)$ for each $b\in B$ and $m\in M$. The `secondary Hochschild cohomology' of such a triple $(A,B,\zeta)$ with coefficients in $M$ was introduced by Staic \cite{St1} and studied further in a series
of papers \cite{St2}, \cite{St3}, \cite{St4}. When $B=k$ and $\zeta$ is the unit map of the $k$-algebra $A$, this reduces to the ordinary Hochschild cohomology $HH^\bullet(A,M)$. 

\begin{defn}\label{D2.2} Let $B$ be a commutative $k$-algebra. An entwining structure over $B$ is a tuple $(A,B,C,\psi,\zeta)$ consisting of the following data

\smallskip
(1) $\zeta: B\longrightarrow A$ is a morphism of $k$-algebras such that  
$\zeta(B)\subseteq Z(A)$.

\smallskip
(2) The triple $(A,C,\psi)$ is an entwining structure over $k$ satisfying the additional condition that  
\begin{equation}\label{eq2.2} \zeta(b)_\psi\otimes c^\psi=\psi(c\otimes \zeta(b))=\zeta(b)\otimes c
\end{equation}
for each $b\in B$ and $c\in C$.

\end{defn}

For an $A$-bimodule $M$, the Hochschild complex of an entwining structure $(A,C,\psi)$ with coefficients in $M$ was introduced by Brzezi\'{n}ski in \cite[$\S$ 2]{Brz2}. From now onwards we will always suppose that $M$ satisfies the following condition: 
\begin{equation}\label{sym}
\zeta(b)m=m\zeta(b) \qquad \forall\textrm{ }b\in B, m\in M
\end{equation} We are now ready to introduce the secondary Hochschild complex $\mathscr C^\bullet_\psi((A,B,C,\zeta);M)$ of an entwining structure $(A,B,C,\psi,\zeta)$ over $B$ with coefficients in $M$. We set
\begin{equation}\label{secondary}
\mathscr C^n_\psi((A,B,C,\zeta);M)=Hom_k(C\otimes A^{\otimes n}\otimes B^{\otimes \frac{n(n-1)}{2}},M)
\end{equation} For the sake of convenience, we will write an element of $C\otimes A^{\otimes n}\otimes B^{\otimes \frac{n(n-1)}{2}}$ as a ``tensor matrix''
\begin{equation}\label{tensmat}
c\otimes M=c\otimes ((m_{ij}))_{1\leq i,j,\leq n}=c\bigotimes \begin{pmatrix}
a_1 & b_{12} & b_{13} & b_{14} & ...& b_{1,n-2} & b_{1,n-1} & b_{1n} \\
1 & a_2 & b_{23}  & b_{24} &  ...& b_{2,n-2} & b_{2,n-1} & b_{2n} \\
1 & 1 & a_3  & b_{34} &  ...& b_{3,n-2} & b_{3,n-1} & b_{3n} \\
. & . & .  & . &  ...& . & . & .\\
1 & 1 & 1 & 1 &  ...& 1 & a_{n-1} & b_{n-1,n}\\
1 & 1 & 1 & 1 &  ...& 1 & 1 & a_n \\
\end{pmatrix}
\end{equation} with $c\in C$, $a_i\in A$, $b_{ij}\in B$ and $1\in k$.  We need to describe the differential
\begin{equation}
\delta^n : \mathscr C^n_\psi((A,B,C,\zeta);M)\longrightarrow \mathscr C^{n+1}_\psi((A,B,C,\zeta);M)
\end{equation} For this, we consider $f\in \mathscr C^n_\psi((A,B,C,\zeta);M)=Hom_k(C\otimes A^{\otimes n}\otimes B^{\otimes \frac{n(n-1)}{2}},M)$. Then, we define $\delta(f)=\delta^n(f)\in 
Hom_k(C\otimes A^{\otimes n+1}\otimes B^{\otimes \frac{(n+1)n}{2}},M)$ as follows

\begin{equation}\label{maindiff}
\begin{array}{l}
\delta^n(f)\left(c\bigotimes \begin{pmatrix}
a_1 & b_{12} & b_{13} & b_{14} & ...& b_{1,n-1} & b_{1,n} & b_{1,n+1} \\
1 & a_2 & b_{23}  & b_{24} &  ...& b_{2,n-1} & b_{2,n} & b_{2,n+1} \\
1 & 1 & a_3  & b_{34} &  ...& b_{3,n-1} & b_{3,n} & b_{3,n+1} \\
. & . & .  & . &  ...& . & . & .\\
1 & 1 & 1 & 1 &  ...& 1 & a_{n} & b_{n,n+1}\\
1 & 1 & 1 & 1 &  ...& 1 & 1 & a_{n+1}\\
\end{pmatrix}\right) \\\\
= \zeta\left( \underset{j=2}{\overset{n+1}{\prod}}b_{1j}\right) a_{1\psi}\cdot f\left(c^\psi\bigotimes \begin{pmatrix}
 a_2 & b_{23}  & b_{24} &  ...& b_{2,n-1} & b_{2,n} & b_{2,n+1} \\
 1 & a_3  & b_{34} &  ...& b_{3,n-1} & b_{3,n} & b_{3,n+1} \\
 . & .  & . &  ...& . & . & .\\
 1 & 1 & 1 &  ...& 1 & a_{n} & b_{n,n+1}\\
 1 & 1 & 1 &  ...& 1 & 1 & a_{n+1} \\
\end{pmatrix}\right)
\end{array}
\end{equation}
\begin{equation*}
\begin{array}{l}
+\underset{i=1}{\overset{n}{\sum}}(-1)^if\left(c\bigotimes \begin{pmatrix}
a_1 & b_{12} & \dots & b_{1i}b_{1,i+1} & \dots & b_{1,n+1}\\
1 & a_2   & \dots & b_{2i}b_{2,i+1} & \dots & b_{2,n+1} \\
. & .   & \dots &\dots & \dots & . \\
1 & 1  & \dots & \zeta(b_{i,i+1})a_{i}a_{i+1} &  \dots & b_{i,n+1}b_{i+1,n+1}\\
. & .  & \dots & \dots & \dots & . \\
1 & 1  & \dots &\dots &  \dots & b_{n,n+1}\\
1 & 1  & \dots & \dots &\dots & a_{n+1} \\
\end{pmatrix}\right) \\\\
+(-1)^{n+1}\zeta\left(\underset{k=1}{\overset{n}{\prod}}b_{k,n+1}\right)f\left(c\bigotimes \begin{pmatrix}
a_1 & b_{12} & b_{13} & b_{14} & ...& b_{1,n-1} & b_{1,n}  \\
1 & a_2 & b_{23}  & b_{24} &  ...& b_{2,n-1} & b_{2,n}   \\
1 & 1 & a_3  & b_{34} &  ...& b_{3,n-1} & b_{3,n}   \\
. & . & .  & . &  ...& . & . \\
1 & 1 & 1 & 1 &  ...& 1 & a_{n}  \\
\end{pmatrix}\right)\cdot a_{n+1}\\
\end{array}
\end{equation*} We need to show that $\delta^{n+1}\circ \delta^n=0$. For this, we use the canonical isomorphisms
\begin{equation} \label{thetcan}
\begin{array}{c}
\begin{CD} \theta^n : \mathscr C^n_\psi((A,B,C,\zeta);M)=Hom(C\otimes A^{\otimes n}\otimes B^{\otimes \frac{n(n-1)}{2}},M) @>\cong >> Hom( A^{\otimes n}\otimes B^{\otimes \frac{n(n-1)}{2}},Hom(C,M))\\ \end{CD} \\ \\
 \theta^n(f)\begin{pmatrix}
a_1 & b_{12} & b_{13} & b_{14} & ...& b_{1,n-1} &   b_{1n} \\
1 & a_2 & b_{23}  & b_{24} &  ...& b_{2,n-1} &  b_{2n} \\
1 & 1 & a_3  & b_{34} &  ...& b_{3,n-1} &  b_{3n} \\
. & . & .  & . &  ...& .   & .\\
1 & 1 & 1 & 1 &  ...& 1 & a_n\\
\end{pmatrix}(c)=f\left(c\otimes \begin{pmatrix}
a_1 & b_{12} & b_{13} & b_{14} & ...& b_{1,n-1} &   b_{1n} \\
1 & a_2 & b_{23}  & b_{24} &  ...& b_{2,n-1} &  b_{2n} \\
1 & 1 & a_3  & b_{34} &  ...& b_{3,n-1} &  b_{3n} \\
. & . & .  & . &  ...& .   & .\\
1 & 1 & 1 & 1 &  ...& 1 & a_n\\
\end{pmatrix} \right)\\
 \end{array}
\end{equation} We now recall (see \cite[$\S$ 2]{Brz2}) that $Hom(C,M)$ becomes  an $A$-bimodule by setting
\begin{equation}\label{eq2.6}
(g\cdot a)(c)=g(c) \cdot a \qquad (a\cdot g)(c)=a_\psi\cdot g(c^\psi)
\end{equation} for any $g\in Hom(C,M)$, $a\in A$ and $c\in C$.

\begin{thm}\label{P2.3} Let $B$ be a commutative $k$-algebra and let $(A,B,C,\psi,\zeta)$ be an entwining structure over $B$. Let $M$ be an $A$-bimodule such that $\zeta(b)m=m\zeta (b)$ 
for each $b\in B$ and $m\in M$. Then, $(\mathscr C_\psi^\bullet((A,B,C,\zeta);M),\delta^\bullet)$ is a cochain complex, i.e., $\delta\circ \delta=0$. 

\end{thm}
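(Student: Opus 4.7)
The plan is to reduce the claim to the fact, due to Staic \cite{St1}, that the differential on the secondary Hochschild complex of a triple $(A,B,\zeta)$ with coefficients in a suitable bimodule squares to zero. The reduction is made by transporting $\delta^\bullet$ across the canonical isomorphism $\theta^\bullet$ of \eqref{thetcan}, and identifying the transported differential with Staic's differential on the complex $\mathscr C^\bullet((A,B,\zeta);Hom(C,M))$, where $Hom(C,M)$ is endowed with the $A$-bimodule structure \eqref{eq2.6} of Brzezi\'nski.

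First, I would verify that $Hom(C,M)$, together with $\zeta$, satisfies the centrality hypothesis required of coefficient bimodules in Staic's theory, namely $\zeta(b)\cdot g = g\cdot \zeta(b)$ for every $b\in B$ and $g\in Hom(C,M)$. Evaluating at $c\in C$ gives
\begin{equation*}
(\zeta(b)\cdot g)(c)=\zeta(b)_\psi\, g(c^\psi)=\zeta(b)\,g(c)=g(c)\,\zeta(b)=(g\cdot \zeta(b))(c),
\end{equation*}
where the second equality uses \eqref{eq2.2} and the third uses \eqref{sym}. So $(A,B,\zeta)$ together with $Hom(C,M)$ is a legitimate input for Staic's construction.

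The core step is to check the intertwining identity $\theta^{n+1}\circ \delta^n = \delta^n_{\mathrm{St}}\circ \theta^n$, where $\delta^\bullet_{\mathrm{St}}$ is Staic's differential on $\mathscr C^\bullet((A,B,\zeta); Hom(C,M))$. This amounts to comparing the three groups of summands in \eqref{maindiff} with the corresponding groups in Staic's formula. The leading term $\zeta(\prod_{j=2}^{n+1} b_{1j})\,a_{1\psi}\cdot f(c^\psi\otimes \dots)$ matches Staic's leading term because the left $A$-action on $Hom(C,M)$ is, by \eqref{eq2.6}, exactly $(a\cdot g)(c)= a_\psi g(c^\psi)$; the alternating middle sum passes through unchanged since none of its summands touches the $C$-variable after it is pulled out by $\theta^n$; and the trailing term $(-1)^{n+1}\zeta(\prod_{k=1}^n b_{k,n+1})\,f(\dots)\cdot a_{n+1}$ matches Staic's trailing term via $(g\cdot a)(c)=g(c)\cdot a$.

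Once the intertwining identity is established, $\delta^{n+1}\circ \delta^n=0$ follows from $\delta_{\mathrm{St}}\circ \delta_{\mathrm{St}}=0$ proved in \cite{St1}, together with the invertibility of each $\theta^n$. The principal obstacle is essentially notational: one must track carefully that every $b_{ij}$-factor and every $\psi$-twist appearing in \eqref{maindiff} lands in the correct slot of Staic's differential applied to $\theta^n(f)$. This is a purely combinatorial index-by-index verification that uses no new structural ingredient beyond the compatibilities \eqref{ent}, \eqref{eq2.2} and the bimodule definition \eqref{eq2.6}.
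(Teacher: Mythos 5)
Your proposal is correct and follows essentially the same route as the paper: the paper's proof likewise verifies $\zeta(b)\cdot g=g\cdot\zeta(b)$ on $Hom(C,M)$ via \eqref{eq2.2} and \eqref{sym}, then transports $\delta$ across the isomorphism $\theta^\bullet$ of \eqref{thetcan} and identifies it with Staic's differential $\delta'$ on $\mathscr C^\bullet((A,B,\zeta);Hom(C,M))$, checking the leading and trailing terms against the bimodule structure \eqref{eq2.6}. No substantive difference in strategy or in the ingredients used.
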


\begin{proof}Since $(A,B,C,\psi,\zeta)$ is an entwining structure over $B$, it follows from \eqref{eq2.2} and \eqref{sym} that
\begin{equation}\label{eq2.7}
(\zeta(b)\cdot g)(c)=\zeta(b)_\psi \cdot g(c^\psi) =\zeta(b)\cdot g(c)=g(c)\cdot \zeta(b)=(g\cdot \zeta(b))(c)
\end{equation} for any $b\in B$, $c\in C$ and $g\in Hom(C,M)$. 
From \eqref{eq2.7}, it now follows that we can construct as in \cite[Proposition 2]{St1} a secondary Hochschild complex 
\begin{equation*}(\mathscr C^\bullet((A,B,\zeta);Hom(C,M)),\delta')
\end{equation*}for the triple $(A,B,\zeta)$  with coefficients in the $A$-bimodule $Hom(C,M)$. The terms $Hom( A^{\otimes n}\otimes B^{\otimes \frac{n(n-1)}{2}},Hom(C,M))$ of this complex are canonically identified with
those of $\mathscr C_\psi^\bullet((A,B,C,\zeta);M)$. In order to show that $(\mathscr C_\psi^\bullet((A,B,C,\zeta);M),\delta^\bullet)$ is a cochain complex, it suffices therefore to check that the differential $\delta$ can be identified with $\delta'$. 

\smallskip
Using \eqref{eq2.6} and maintaining the notation in \eqref{maindiff}, we see that

\begin{equation*}
\tiny
 \zeta\left( \underset{j=2}{\overset{n+1}{\prod}}b_{1j}\right) a_{1\psi}\cdot f\left(c^\psi\bigotimes \begin{pmatrix}
 a_2 & b_{23}  & b_{24} &  ...&   b_{2,n} & b_{2,n+1} \\
 1 & a_3  & b_{34} &  ...&   b_{3,n} & b_{3,n+1} \\
 . & .  & . &  ...&  . & .\\
 1 & 1 & 1 &  ...  & a_{n} & b_{n,n+1}\\
 1 & 1 & 1 &  ...  & 1 & a_{n+1} \\
\end{pmatrix}\right) 
 =\zeta\left( \underset{j=2}{\overset{n+1}{\prod}}b_{1j}\right)\left(a_1\cdot \theta(f) \begin{pmatrix}
 a_2 & b_{23}  & b_{24} &  ...&   b_{2,n} & b_{2,n+1} \\
 1 & a_3  & b_{34} &  ...&   b_{3,n} & b_{3,n+1} \\
 . & .  & . &  ...&  . & .\\
 1 & 1 & 1 &  ...  & a_{n} & b_{n,n+1}\\
 1 & 1 & 1 &  ...  & 1 & a_{n+1} \\
\end{pmatrix}\right)(c)
\end{equation*}
 Looking at the other terms on the right hand side of \eqref{maindiff}, it is now evident that
\begin{equation*}
\tiny
\delta^n(f)\left(c\bigotimes \begin{pmatrix}
a_1 & b_{12} & b_{13} & b_{14} & ...& b_{1,n-1} & b_{1,n} & b_{1,n+1} \\
1 & a_2 & b_{23}  & b_{24} &  ...& b_{2,n-1} & b_{2,n} & b_{2,n+1} \\
1 & 1 & a_3  & b_{34} &  ...& b_{3,n-1} & b_{3,n} & b_{3,n+1} \\
. & . & .  & . &  ...& . & . & .\\
1 & 1 & 1 & 1 &  ...& 1 & a_{n} & b_{n,n+1}\\
1 & 1 & 1 & 1 &  ...& 1 & 1 & a_{n+1}\\
\end{pmatrix}\right)= \delta'(\theta(f))\begin{pmatrix}
a_1 & b_{12} & b_{13} & b_{14} & ...& b_{1,n-1} & b_{1,n} & b_{1,n+1} \\
1 & a_2 & b_{23}  & b_{24} &  ...& b_{2,n-1} & b_{2,n} & b_{2,n+1} \\
1 & 1 & a_3  & b_{34} &  ...& b_{3,n-1} & b_{3,n} & b_{3,n+1} \\
. & . & .  & . &  ...& . & . & .\\
1 & 1 & 1 & 1 &  ...& 1 & a_{n} & b_{n,n+1}\\
1 & 1 & 1 & 1 &  ...& 1 & 1 & a_{n+1}\\
\end{pmatrix}(c)\\
\end{equation*} This proves the result.
\end{proof}

We conclude this section by showing that the secondary Hochschild complex of an entwining structure contains the secondary Hochschild complex of Staic \cite{St1}.

\begin{thm}\label{P2.4} There is a canonical inclusion of complexes
\begin{equation}\label{eq2.11}
j: \mathscr C^\bullet((A,B,\zeta);M)\hookrightarrow \mathscr C^\bullet_\psi((A,B,C,\zeta);M)\qquad f\mapsto f\circ (\varepsilon \otimes id)
\end{equation} where $\varepsilon : C\longrightarrow k$ is the counit morphism of the coalgebra $C$.

\end{thm}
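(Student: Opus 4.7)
The plan is to verify three things in sequence: that $j$ is well-defined on $k$-linear maps, that it is injective, and most importantly that it is a chain map, i.e.\ commutes with the differentials of the two complexes.

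\textbf{Well-definedness and injectivity.} The map $f \mapsto f \circ (\varepsilon \otimes id)$ clearly sends a $k$-linear map on $A^{\otimes n}\otimes B^{\otimes n(n-1)/2}$ to a $k$-linear map on $C\otimes A^{\otimes n}\otimes B^{\otimes n(n-1)/2}$, with value $j(f)(c\otimes M)=\varepsilon(c)\cdot f(M)$ on a tensor matrix. For injectivity, I use that $\varepsilon$ must be surjective because the counit axiom $(\varepsilon\otimes id)\circ \Delta = id$ forces $\varepsilon\neq 0$ whenever $C\neq 0$; thus there exists $c_0\in C$ with $\varepsilon(c_0)=1$, and from $j(f)(c_0\otimes M)=f(M)$ we conclude $j(f)=0 \Rightarrow f=0$.

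\textbf{Compatibility with differentials.} Writing $\delta_S$ for Staic's differential on $\mathscr C^\bullet((A,B,\zeta);M)$ and $\delta=\delta^n$ for the differential on $\mathscr C^\bullet_\psi((A,B,C,\zeta);M)$ from \eqref{maindiff}, the goal is the identity $\delta(j(f))=j(\delta_S f)$ evaluated on a tensor matrix $c\otimes M$ of length $n+1$. Because $\varepsilon\otimes id$ is inserted inside $f$, the middle terms of $\delta(j(f))$ immediately produce $\varepsilon(c)\cdot (\text{middle terms of }\delta_S f)$, and the last term produces $\varepsilon(c)\cdot (\text{last term of }\delta_S f)$, since the coalgebra element $c$ is untouched. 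The only term that requires real work is the first one: here $\Delta$ acts via $\psi$, giving
\begin{equation*}
\zeta\!\left(\textstyle\prod_{j=2}^{n+1}b_{1j}\right) a_{1\psi}\cdot j(f)\!\left(c^\psi\otimes \widehat M\right)
=\zeta\!\left(\textstyle\prod_{j=2}^{n+1}b_{1j}\right)\, a_{1\psi}\,\varepsilon(c^\psi)\cdot f(\widehat M).
\end{equation*}
Applying the entwining identity $a_\psi\,\varepsilon(c^\psi)=\varepsilon(c)\,a$ from \eqref{ent} collapses this to $\varepsilon(c)\cdot\zeta(\prod b_{1j})a_1\cdot f(\widehat M)$, which is precisely $\varepsilon(c)$ times the first term of $\delta_S f$. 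Summing all contributions yields $\delta(j(f))(c\otimes M)=\varepsilon(c)\cdot (\delta_S f)(M)=j(\delta_S f)(c\otimes M)$, as required.

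\textbf{Expected obstacle.} The only conceptual point is the first term, which requires the key entwining axiom $a_\psi\varepsilon(c^\psi)=\varepsilon(c)a$; the remaining verifications are bookkeeping. This is exactly the place where the hypothesis that $(A,C,\psi)$ is an entwining structure (rather than just an arbitrary $k$-linear map $\psi$) is used, so it is natural that this is where the proof is concentrated. The condition \eqref{eq2.2} on $\zeta$ plays no explicit role here; it will instead be used in the results about cup products proved later in the paper via Theorem \ref{P2.3}.
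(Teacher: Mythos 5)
Your proposal is correct and follows essentially the same route as the paper: injectivity is deduced from the surjectivity of $\varepsilon$ (hence of $\varepsilon\otimes id$) when $C\neq 0$, and compatibility with the differentials reduces to the entwining counit axiom $a_\psi\varepsilon(c^\psi)=\varepsilon(c)a$ applied to the first term of \eqref{maindiff}, with the remaining terms commuting trivially. The paper leaves this last verification as an easy check, and your term-by-term computation is exactly that check spelled out.
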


\begin{proof} We have assumed that $C\ne 0$ and hence $\varepsilon : C\longrightarrow k$ is an epimorphism. Then, the morphisms $\varepsilon \otimes id : C\otimes A^{\otimes n}\otimes B^{\otimes \frac{n(n-1)}{2}}\longrightarrow A^{\otimes n}\otimes B^{\otimes \frac{n(n-1)}{2}}$  are all epimorphisms and hence they induce a monomorphism $j:\mathscr C^\bullet((A,B,\zeta);M)\hookrightarrow \mathscr C^\bullet_\psi((A,B,C,\zeta);M)$. Using the fact that 
$a_\psi\varepsilon (c^\psi)=\varepsilon (c)a$ for all $a\in A$, $c\in C$, it may be verified easily that $\delta (f\circ (\varepsilon \otimes id))=\delta(f)\circ (\varepsilon \otimes id)$ for 
each $f\in \mathscr C^\bullet((A,B,\zeta);M)$.
\end{proof}

\section{Cup products on the secondary complex of an entwining structure}

\medskip
In this section, we will always assume that $M=A$. In that case, the complex $\mathscr C^\bullet_\psi((A,B,C,\zeta);A)$ will be written simply
as $\mathscr C^\bullet_\psi(A,B,C,\zeta)$. Our purpose is to produce cup products on the complex $\mathscr C^\bullet_\psi(A,B,C,\zeta)$ as well as the induced
products on the Hochschild cohomology $HH^\bullet_\psi(A,B,C,\zeta)$. 

\smallskip
In order to proceed further, we will need some simplifying notation. 

\smallskip
(A) For an element $M=((m_{ij}))_{1\leq i,j\leq n}\in A^{\otimes n}\otimes B^{\otimes \frac{n(n-1)}{2}}$ as in \eqref{tensmat} we denote by $M((k,l);(k',l'))$
the $(k'-k+1)\times (l'-l+1)$-tensor matrix consisting of those entries $m_{ij}$ such that $k\leq i\leq k'$ and $l\leq j\leq l'$. 

\medskip
(B) Given a $k\times l$-tensor matrix $T=((t_{ij}))\in B^{\otimes kl}$:

\begin{enumerate}
\item We let $\Pi^rT$ be the $(k\times 1)$-column matrix in $B^{\otimes k}$ whose $i$-th element is given by $\prod_{j=1}^{l}t_{ij}$. 

\item We let $\Pi^cT$ be the $(1\times l)$-row matrix in $B^{\otimes l}$ whose $j$-th element is given by $\prod_{i=1}^{k}t_{ij}$. 

\item We let $\Pi T\in B$ be the single element obtained by multiplying together all entries in $T$, i.e., $\Pi T=\Pi^r(\Pi^cT)=\Pi^c(\Pi^rT)$. 
\end{enumerate} For any $k\leq n$, the map $C\otimes A^{\otimes n}\otimes B^{\otimes \frac{n(n-1)}{2}}
\longrightarrow A^{\otimes k}\otimes B^{\otimes \frac{k(k-1)}{2}}\otimes B^{\otimes k(n-k)}\otimes C\otimes A^{\otimes (n-k)}\otimes B^{\otimes \frac{(n-k)(n-k-1)}{2}}$ induced  by applying $k$-times the entwining $\psi: C\otimes A\longrightarrow A\otimes C$ will be written as
\begin{equation*}
c\otimes \begin{pmatrix}
M((1,1);(k,k)) & M((1,k+1);(k,n))\\
M((k+1,1);(n,k)) & M((k+1,k+1);(n,n)) \\
\end{pmatrix}\mapsto \begin{pmatrix}
M((1,1);(k,k))_\psi & M((1,k+1);(k,n))\\
M((k+1,1);(n,k)) & c^{\psi^k}\otimes M((k+1,k+1);(n,n)) \\
\end{pmatrix}
\end{equation*} We will now describe two different cup products on $\mathscr C^\bullet_\psi(A,B,C,\zeta)$. The first is given by taking for any $f\in \mathscr C^m_\psi(A,B,C,\zeta)$, 
$g\in \mathscr C^n_\psi(A,B,C,\zeta)$, the element $f\cup g\in \mathscr C^{m+n}_\psi(A,B,C,\zeta)$  described by setting
\begin{equation}\label{cup1}
\begin{array}{l}
(f\cup g)(c\otimes M)\\ =\zeta\left(\Pi M((1,m+1);(m,m+n))\right)\cdot f(c_1\otimes M((1,1);(m,m))_\psi)\cdot g(c_2^{\psi^m}\otimes M((m+1,m+1);(n,n)))\\
\end{array}
\end{equation} for any $c\otimes M\in C\otimes A^{\otimes (m+n)}\otimes B^{\otimes \frac{(m+n)(m+n-1)}{2}}$. The second cup product  $f\sqcup g\in \mathscr C^{m+n}_\psi(A,B,C,\zeta)$ is described by setting
\begin{equation}\label{cup2}
\begin{array}{l}
(f\sqcup g)(c\otimes M)\\ =\zeta\left(\Pi M((1,m+1);(m,m+n))\right)\cdot f(c_2\otimes M((1,1);(m,m)))_\psi \cdot g(c_1^\psi \otimes M((m+1,m+1);(n,n)))\\
\end{array}
\end{equation} In order to study the cup products in \eqref{cup1} and \eqref{cup2}, we will show more generally that $\mathscr C_\psi(A,B,C,\zeta)$ is a ``weak comp algebra'' in
the sense of \cite[Definition 4.4]{Brz2}. This notion was introduced by Brzezi\'{n}ski \cite{Brz2} as a generalization of the notion of a comp algebra introduced in  \cite{G1}, \cite{GS2}. 

\begin{defn} \label{D3.1} (see \cite[Definition 4.4]{Brz2})
Let $k$ be a field. A (right) weak comp algebra $(V^\bullet,\Diamond,\alpha)$ over $k$ consists of the following data:

\smallskip
(A) A graded vector space $V=\underset{i\geq 0}{\bigoplus}V^i$ and a given element $\alpha\in V^2$

\smallskip
(B) A family $\Diamond$ of  operations 
\begin{equation}
\Diamond_i:V^m\otimes V^n\longrightarrow V^{m+n-1} \qquad \forall\textrm{ }i\geq 0
\end{equation} satisfying the following conditions for any $f\in V^m$, $g\in V^n$, $h\in V^p$:

\smallskip
\begin{enumerate}
\item $f\Diamond_ig =0$ if $i>m-1$.

\item $(f\Diamond_i g)\Diamond_j h=f\Diamond_i(g\Diamond_{j-i}h)$ if $i\leq j<n+i$

\item if either $g=\alpha$ or $h=\alpha$, then $(f\Diamond_i g)\Diamond_j h=(f\Diamond_j h)\Diamond_{i+p-1}g$ if $j<i$

\item $\alpha\Diamond_0\alpha = \alpha\Diamond_1\alpha$.
\end{enumerate}
\end{defn}

For $f\in \mathscr C^m_\psi(A,B,C,\zeta)$, 
$g\in \mathscr C^n_\psi(A,B,C,\zeta)$ and $0\leq i<m$, we now define $f\Diamond_ig\in \mathscr C_\psi^{m+n-1}(A,B,C,\zeta)$ by setting
\begin{equation*}\small
\begin{array}{l}
(f\Diamond_ig)(c\otimes M)\\
=f\left(c_1\otimes \begin{pmatrix} 
M((1,1);(i,i))_\psi & \Pi^rM((1,i+1);(i,i+n))& M((1,i+n+1);(i,m+n-1))\\
1 & g(c_2^{\psi^i}\otimes M((i+1,i+1);(i+n,i+n))) &\Pi^c M((i+1,i+n+1);(i+n,m+n-1))\\
1 & 1 & M((i+n+1,i+n+1);(m+n-1,m+n-1)) \\
\end{pmatrix}\right)\\
\end{array}
\end{equation*} For all other values of $i$, we take $f\Diamond_ig=0$. We also fix an element $\alpha\in \mathscr C_\psi^2(A,B,C,\zeta)$ defined by setting 
\begin{equation}\label{alpha}
\alpha\left(c\otimes \begin{pmatrix} a_1 & b_{12} \\ 1 & a_2 \\ \end{pmatrix}\right):=\varepsilon(c)\zeta(b_{12})a_1a_2
\end{equation} 

\begin{lem}\label{L3.2} For $f\in \mathscr C^m_\psi(A,B,C,\zeta)$, 
$g\in \mathscr C^n_\psi(A,B,C,\zeta)$, $h\in \mathscr C_\psi^p(A,B,C,\zeta)$, we have 
\begin{equation}\label{3.5eq}
 (f\Diamond_i g)\Diamond_j h=f\Diamond_i(g\Diamond_{j-i}h) \in \mathscr C_\psi^{m+n+p-2}(A,B,C,\zeta)
 \end{equation} for $i\leq j< n+i$. 

\end{lem}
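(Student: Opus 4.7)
\smallskip
The plan is to verify \eqref{3.5eq} by unfolding both sides directly from the definition of $\Diamond_i$ on a generic tensor matrix $c\otimes N\in C\otimes A^{\otimes(m+n+p-2)}\otimes B^{\otimes \binom{m+n+p-2}{2}}$, and checking that the two resulting expressions agree entry by entry. The hypothesis $i\leq j<n+i$ is precisely what guarantees that on the left-hand side the $h$-argument, when inserted at position $j$, falls \emph{inside} the block that was occupied by $g$ after $f\Diamond_i g$ was formed. On the right-hand side, the cochain $g\Diamond_{j-i}h$ of arity $n+p-1$ is inserted as a single block into the $i$-th slot of $f$. In both cases the three cochains $f$, $g$, $h$ end up being evaluated on the same three contiguous sub-blocks of $N$: a top-left block of size $i$, an inner block of size $n+p-1$ containing $h$ at its position $j-i$, and a bottom-right block of size $m-i-1$.

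\smallskip
To match the two sides I would use the following ingredients. First, coassociativity of $\Delta$, so that $c_1\otimes c_{21}\otimes c_{22}$ and $c_{11}\otimes c_{12}\otimes c_2$ both equal the iterated coproduct $c_{(1)}\otimes c_{(2)}\otimes c_{(3)}$; this identifies the coproducts produced by applying $\Diamond_i$ twice. Second, the pentagon-type compatibility $(1\otimes \Delta)\circ \psi=(\psi\otimes 1)\circ (1\otimes \psi)\circ (\Delta\otimes 1)$ from \eqref{ent}, together with its iterates, which allow one to commute $(\cdot)^{\psi^i}$ and $(\cdot)^{\psi^{j-i}}$ consistently across both sides so that $c^{\psi^j}$ appears on both. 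Third, the multiplicativity $\psi(c\otimes ab)=a_\psi b_\psi\otimes c^{\psi\psi}$, which is needed when the entwinings of the $f$-block are pushed past the inner $g$-block. Finally, commutativity of $B$, which ensures that the row-wise and column-wise products $\Pi^r(\cdot)$, $\Pi^c(\cdot)$, $\Pi(\cdot)$ collected from the off-diagonal $B$-rectangles of $N$ coincide regardless of the order in which the three blocks are carved out.

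\smallskip
The main obstacle is purely bookkeeping: both sides are a nest of $\psi$'s, $\Delta$'s and row/column products drawn from the off-diagonal part of the tensor matrix, and one must keep careful track of which $B$-entries are absorbed into the $\zeta$-prefactor for $f$, which for $g$, and which for $h$. A convenient organisation is to fix the three blocks of $N$ by their absolute positions $[1,i]$, $[i+1,i+n+p-1]$, $[i+n+p,m+n+p-2]$, and to record the $\zeta$-prefactor coming from each off-diagonal rectangle separately, so that each $B$-entry contributes to both sides of \eqref{3.5eq} through the same factor. Under this organisation the identity reduces to the analogous weak comp algebra identity of Brzezi\'nski \cite[$\S$ 4]{Brz2} for the (ordinary) Hochschild complex of an entwining structure, with the extra combinatorial data contributed by $B$ absorbed by commutativity of $B$ and the centrality condition $\zeta(B)\subseteq Z(A)$ which lets the $\zeta$-prefactors be moved freely through $f$, $g$ and $h$.
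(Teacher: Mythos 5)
Your proposal follows essentially the same route as the paper: both sides are unfolded directly on a block-decomposed upper triangular tensor matrix, and the two expressions are matched using coassociativity of $\Delta$ together with the $i$-fold iterate of the compatibility $(1\otimes \Delta)\circ\psi=(\psi\otimes 1)\circ(1\otimes\psi)\circ(\Delta\otimes 1)$ from \eqref{ent}, exactly as in the paper's $5\times 5$ block computation. The only (harmless) discrepancies are that the multiplicativity axiom $\psi(c\otimes ab)=a_\psi b_\psi\otimes c^{\psi\psi}$ and the centrality of $\zeta(B)$ are not actually needed here, since the operations $\Diamond_i$ produce no $\zeta$-prefactors and never multiply $A$-entries across an entwining; the $B$-entries are merely collected by $\Pi^r$ and $\Pi^c$.
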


\begin{proof}
For $i\geq m$, both sides of \eqref{3.5eq} are $0$. So we take $i<m$.  We notice that this implies $j<n+i\leq m+n-1$. We now consider a dimension 
$m+n+p-2$  square  ``upper triangular tensor matrix'' $M$ (i.e., entries below the diagonal are all $1$) in $A^{\otimes (m+n+p-2)}\otimes 
B^{\otimes \frac{(m+n+p-2)(m+n+p-3)}{2}}$. For the sake of convenience, we will write $M$ as a $5\times 5$ matrix by subdividing
it into the following blocks
\begin{equation}\label{3.6eq}
M=\begin{pmatrix}
U(i) & X^{12} & X^{13} & X^{14} & X^{15} \\
1 & U(j-i) & X^{23} & X^{24} & X^{25} \\
1 & 1 & U(p) & X^{34} & X^{35} \\
1& 1 & 1 & U(n-j+i-1) & X^{45} \\
1 & 1 & 1 & 1 & U(m-i-1) \\
\end{pmatrix}
\end{equation} where each $U(k)$ in \eqref{3.6eq} is a square block of dimension $k$.  For $c\in C$, we now have
\begin{equation*}
\begin{array}{l}
f\Diamond_i(g\Diamond_{j-i}h)(c\otimes M) \\
 = f\left(c_1\otimes \left(
 \begin{matrix}
 U(i)_\psi & \Pi^r(X^{12}; X^{13} ; X^{14}) &  X^{15} \\ 
 1 & (g\Diamond_{j-i}h) \left(c_2^{\psi^i}\otimes \left(
 \begin{matrix}
 U(j-i) & X^{23} & X^{24}\\
 1 & U(p) & X^{34}\\ 
 1 & 1 &  U(n-j+i-1) \\
 \end{matrix} \right) \right)& \Pi^c(X^{25}; X^{35} ; X^{45})   \\
 1 & 1 & U(m-i-1)\\ 
 \end{matrix} \right) \right)\\  \\
 = f\left(c_1\otimes \left(
 \begin{matrix}
 U(i)_\psi & \Pi^r(X^{12}; X^{13} ; X^{14}) &  X^{15} \\ 
 1 & g\left((c_2^{\psi^i})_1\otimes \left(
 \begin{matrix}
 U(j-i)_\psi & \Pi^r X^{23} & X^{24}\\
 1 & h\left((c_2^{\psi^i})_2^{\psi^{j-i}}\otimes U(p)\right) & \Pi^c X^{34}\\ 
 1 & 1 &  U(n-j+i-1) \\
 \end{matrix} \right) \right)& \Pi^c(X^{25}; X^{35} ; X^{45})   \\
 1 & 1 & U(m-i-1)\\ 
 \end{matrix} \right) \right)\\ 
\end{array}
\end{equation*}  On the other hand, we have
\begin{equation*}
\begin{array}{l}
( (f\Diamond_i g)\Diamond_j h)(c\otimes M)\\
= (f\Diamond_ig)\left(c_1\otimes \left(\begin{matrix}
U(i)_\psi & X^{12} & \Pi^r X^{13} & X^{14} & X^{15} \\
1 & U(j-i)_\psi & \Pi^r X^{23} & X^{24} & X^{25} \\
1 & 1 & h(c_2^{\psi^j}\otimes U(p)) & \Pi^c X^{34} & \Pi^c X^{35} \\
1& 1 & 1 & U(n-j+i-1) & X^{45} \\
1 & 1 & 1 & 1 & U(m-i-1) \\
\end{matrix} \right) \right)\\ 
= f\left(c_{1}\otimes \left(\begin{matrix}
U(i)_{\psi\psi} & \Pi^r (X^{12}; X^{13}; X^{14}) & X^{15}   \\
1 & g\left(c_{2}^{\psi^i}\otimes \left(
\begin{matrix}
U(j-i)_\psi & \Pi^r X^{23} & X^{24}\\ 
1 & h(c_3^{\psi^j}\otimes U(p)) & \Pi^c X^{34} \\
1 & 1 & U(n-j+i-1) \\
\end{matrix} \right)\right) & \Pi^c(X^{25}; X^{35} ; X^{45})   \\ 
1 & 1 & U(m-i-1) \\
\end{matrix} \right) \right)
\end{array}
\end{equation*} From \eqref{ent}, we know that $a_\psi\otimes \Delta({d}^\psi)={a_{\psi}}_{\psi}\otimes {d}_1^\psi \otimes {d}_2^\psi $  
for
$a\in A$ and $d\in C$. Applying the $i$-th iterate of this condition, the result is now clear. 
\end{proof}

Given an entwining structure $(A,C,\psi)$, the conditions in Definition \ref{D2.1} imply that $\psi$ interacts with the multiplication $\mu: A\otimes A
\longrightarrow A$ and the comultiplication $\Delta: C\longrightarrow C\otimes C$ in the following manner: for any $n\geq 1$, set
\begin{equation}\label{coac}
\rho^n_R: C\otimes A^{\otimes n} \xrightarrow{\textrm{ }\Delta\otimes A^{\otimes n}} C\otimes C\otimes A^{\otimes n}
\xrightarrow{C\otimes ((A^{\otimes n-1}\otimes \psi)\circ (A^{\otimes n-2}\otimes \psi \otimes A)\circ \dots \circ (\psi\otimes A^{\otimes n-1}))} C\otimes A^{\otimes n}\otimes C
\end{equation} Then, for any $0\leq j\leq n-1$, the following diagram commutes (see \cite[Lemma 1.3]{Brz2})
\begin{equation}\label{lemeq}
\begin{CD}
C\otimes A^{\otimes n+1} @>C\otimes A^{\otimes j}\otimes \mu\otimes A^{\otimes n-j-1} >>  C\otimes A^{\otimes n}\\
@V\rho^{n+1}_R VV @V\rho^{n}_RVV \\
C\otimes A^{\otimes n+1}\otimes C@>C\otimes A^{\otimes j}\otimes \mu\otimes A^{\otimes n-j-1}\otimes C>> C\otimes A^{\otimes n}\otimes C
\end{CD}
\end{equation}

\begin{lem}\label{L3.3} For $f\in \mathscr C^m_\psi(A,B,C,\zeta)$, 
$g\in \mathscr C^n_\psi(A,B,C,\zeta)$, $h\in \mathscr C_\psi^p(A,B,C,\zeta)$ and $j<i$, we have 
\begin{equation}\label{3.5eqx}
(f\Diamond_i g)\Diamond_j h=(f\Diamond_j h)\Diamond_{i+p-1}g
 \end{equation} whenever $g=\alpha$ or $h=\alpha$.  

\end{lem}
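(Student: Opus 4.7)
The plan is to expand both sides of \eqref{3.5eqx} from the definition of $\Diamond$ in block-matrix form, as in the proof of Lemma \ref{L3.2}, and to show they agree once the coproduct splittings of $c$ and the iterated $\psi$-applications are reorganized using the entwining axioms together with the counit property that appears the moment $\alpha$ shows up.

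After discarding the trivial cases (so I may assume $0\le j<i\le m-1$), I would write $M$ as an upper triangular tensor matrix of dimension $m+n+p-2$ and break it into a $5\times 5$ block decomposition analogous to \eqref{3.6eq}, this time with diagonal blocks of sizes $j$, $p$, $i-j-1$, $n$ and $m-i-1$; because $j<i$, the $h$-insertion block (size $p$) sits strictly to the left of the $g$-insertion block (size $n$), and the two blocks do not overlap any position of $f$'s argument. The row- and column-products of the off-diagonal blocks are absorbed via $\Pi^r$ and $\Pi^c$ exactly as in Lemma \ref{L3.2}, and the $B$-entries only produce central $\zeta$-factors by \eqref{eq2.2}, so their placement does not obstruct the comparison.

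Expanding $(f\Diamond_i g)\Diamond_j h$ yields $f$ applied to a tensor matrix whose $h$-slot receives $c_{12}^{\psi^j}$ and whose $g$-slot receives $c_2^{\psi^i}$ (with $c\mapsto c_{11}\otimes c_{12}\otimes c_2$ obtained from iterating $\Delta$). Expanding $(f\Diamond_j h)\Diamond_{i+p-1}g$ produces $f$-calls of the same shape, but now $\psi$ has been iterated $i+p-1$ times along the outermost block feeding the $g$-slot instead of $i$ times. Coassociativity of $\Delta$ together with the $\psi$-$\psi$ compatibility in the second line of \eqref{ent} let me identify the two iterated coproducts of $c$, after which the discrepancy is concentrated in a surplus of $p-1$ applications of $\psi$ attached to the $g$-slot (or, symmetrically, $n-1$ applications attached to the $h$-slot).

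The hypothesis $g=\alpha$ or $h=\alpha$ is now decisive: the $\alpha$-slot evaluates to $\varepsilon(c)\zeta(b)a_1a_2$ by \eqref{alpha}, and the counit axiom $a_\psi\varepsilon(c^\psi)=\varepsilon(c)a$ together with the normalization $1_\psi\otimes c^\psi=1\otimes c$ from \eqref{ent} let me strip every surplus $\psi$ from the $\alpha$-side. What remains on both sides is $f$ evaluated on the same tensor matrix, with identical central $\zeta$-factors from the $b$-entries and from the $\alpha$-evaluation. I expect the main obstacle to be the careful bookkeeping of which factor of the iterated coproduct of $c$ carries which string of $\psi$'s through the block decomposition; the argument runs parallel to the non-secondary case in \cite{Brz2}, and the additional $B$-entries add no new difficulty since $\zeta(B)$ is central and $B$ is commutative.
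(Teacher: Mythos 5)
Your overall setup is the same as the paper's: after discarding the trivial cases you expand both sides on an upper triangular tensor matrix written in blocks of sizes $j,p,i-j-1,n,m-i-1$, absorb the off-diagonal blocks through $\Pi^r,\Pi^c$, and use that the $B$-entries only contribute central $\zeta$-factors. The problem is the mechanism you propose for closing the comparison. Take the case $h=\alpha$ (so $p=2$), which is the case the paper works out in detail. After the counit has removed the $\varepsilon(c^{\psi^j})$-factors produced by the $\alpha$-evaluation, the two sides are \emph{not} yet equal: $(f\Diamond_i g)\Diamond_j\alpha$ leaves the entry $\zeta(b)\,(a_{j+1}a_{j+2})_\psi$ with $g$ receiving $c_2^{\psi^{i}}$, whereas $(f\Diamond_j\alpha)\Diamond_{i+1}g$ leaves $\zeta(b)\,a_{j+1,\psi}a_{j+2,\psi}$ with $g$ receiving $c_2^{\psi^{i+1}}$. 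The surplus application of $\psi$ is attached to the coalgebra leg that is fed into $g$, not into $\varepsilon$, and the corresponding algebra-side discrepancy is ``$\psi$ of the product'' versus ``product of the $\psi$'s''. Neither the counit axiom $a_\psi\varepsilon(c^\psi)=\varepsilon(c)a$ nor the normalization $1_\psi\otimes c^\psi=1\otimes c$ — the only tools you invoke at this point — can remove it; indeed by your own bookkeeping the surplus sits at the $g$-slot while it is $h$ that equals $\alpha$, so ``stripping every surplus $\psi$ from the $\alpha$-side'' is not available here.

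What is genuinely needed is the multiplicativity relation in the first line of \eqref{ent}, $(ab)_\psi\otimes c^\psi=a_\psi b_\psi\otimes c^{\psi\psi}$, applied through the preceding blocks — this is exactly the commutative diagram \eqref{lemeq} that the paper cites to finish, identifying $\zeta(b)(a_{j+1}a_{j+2})_\psi\otimes c_2^{\psi^{i}}$ with $\zeta(b)a_{j+1,\psi}a_{j+2,\psi}\otimes c_2^{\psi^{i+1}}$. Your counit-based argument does work for the other case $g=\alpha$, where the extra $\psi$ lands on the output of $h$ paired with a coalgebra factor that is subsequently killed by $\varepsilon$; but since you nowhere invoke the $\psi$--$\mu$ compatibility, your proposal as written does not establish the case $h=\alpha$, and the proof is incomplete at precisely the step where the paper appeals to \eqref{lemeq}.
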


\begin{proof}
Suppose first that $h=\alpha$. Then, $p=2$ and we have to establish that
\begin{equation}\label{3.w} (f\Diamond_i g)\Diamond_j\alpha=(f\Diamond_j \alpha)\Diamond_{i+1}g
\end{equation} for $j<i$. We now consider a dimension 
$m+n$  square  ``upper triangular tensor matrix'' $M$ (i.e., entries below the diagonal are all $1$) in $A^{\otimes (m+n)}\otimes 
B^{\otimes \frac{(m+n)(m+n-1)}{2}}$. For the sake of convenience, we will write $M$ as a $6\times 6$ matrix by subdividing
it into the following blocks
\begin{equation}\label{3.10eq}
M=\begin{pmatrix}
U(j) & X^{12} & X^{13} & X^{14} & X^{15}  & X^{16} \\
1 & a_{j+1} & b  & X^{24} & X^{25} &  X^{26} \\
1 & 1 & a_{j+2} & X^{34} & X^{35}  & X^{36} \\
1& 1 & 1 & U(i-j-1) & X^{45} & X^{46} \\
1 & 1 & 1 & 1 & U(n) & X^{56} \\
1 & 1 & 1 & 1 & 1& U(m-i-1) \\
\end{pmatrix}
\end{equation}

where each $U(k)$ is a square matrix of dimension $k$. For $c\in C$, we now have
\begin{equation*}
\begin{array}{l}
((f\Diamond_i g)\Diamond_j\alpha)(c\otimes M)\\ \\ 
= (f\Diamond_i g)\left(c_1\otimes \left(
\begin{matrix}
U(j)_\psi & \Pi^r(X^{12}; X^{13}) & X^{14} & X^{15} & X^{16}  \\
1 & \varepsilon (c_2^{\psi^j}) \zeta (b) a_{j+1}a_{j+2} & \Pi^c(X^{24}; X^{34}) & \Pi^c(X^{25};X^{35}) &\Pi^c(X^{26};X^{36}) \\
1 & 1 & U(i-j-1) & X^{45} & X^{46} \\ 
1 & 1 & 1 & U(n) & X^{56} \\
1 & 1 & 1 & 1 & U(m-i-1) \\
\end{matrix} \right)\right)\\
= (f\Diamond_i g)\left(c\otimes \left(
\begin{matrix}
U(j) & \Pi^r(X^{12}; X^{13}) & X^{14} & X^{15} & X^{16}  \\
1 &  \zeta (b) a_{j+1}a_{j+2} & \Pi^c(X^{24}; X^{34}) & \Pi^c(X^{25};X^{35}) &\Pi^c(X^{26};X^{36}) \\
1 & 1 & U(i-j-1) & X^{45} & X^{46} \\ 
1 & 1 & 1 & U(n) & X^{56} \\
1 & 1 & 1 & 1 & U(m-i-1) \\
\end{matrix} \right)\right)\\ \\ 
= f\left(c_{1}\otimes \left(
\begin{matrix}
U(j)_{\psi} & \Pi^r(X^{12}; X^{13}) & X^{14} & \Pi^r X^{15} & X^{16}  \\
1 &  \zeta (b) (a_{j+1}a_{j+2})_\psi &\Pi^c(X^{24}; X^{34}) &  \Pi^r\Pi^c(X^{25};X^{35}) &\Pi^c(X^{26};X^{36}) \\
1 & 1 & U(i-j-1)_\psi & \Pi^r X^{45} & X^{46} \\ 
1 & 1 & 1 & g(c_{2}^{\psi^i}\otimes U(n)) & \Pi^c X^{56} \\
1 & 1 & 1 & 1 & U(m-i-1) \\
\end{matrix} \right)\right)
\end{array}
\end{equation*} On the other hand, we have
\begin{equation*}
\begin{array}{l} 
((f\Diamond_j \alpha)\Diamond_{i+1}g)(c\otimes M)\\ \\
=(f\Diamond_j\alpha)\left(c_1\otimes \left(
\begin{matrix}
U(j)_\psi & X^{12} & X^{13} & X^{14} & \Pi^r X^{15}  & X^{16} \\
1 & a_{j+1,\psi} & b  & X^{24} & \Pi^r X^{25}  & X^{26} \\
1 & 1 & a_{j+2,\psi} & X^{34} & \Pi^r X^{35}  & X^{36} \\
1& 1 & 1 & U(i-j-1)_\psi & \Pi^r X^{45} & X^{46} \\
1 & 1 & 1 & 1 & g(c_2^{\psi^{i+1}}\otimes U(n) ) & \Pi^c X^{56} \\
1 & 1 & 1 & 1 & 1& U(m-i-1) \\
\end{matrix} \right)\right) 
\end{array}
\end{equation*}
This further equates to
\begin{equation*}
\begin{array}{l} 
=f\left(c_{11}\otimes \left(\begin{matrix}
U(j)_{\psi\psi} & \Pi^r(X^{12};X^{13}) & X^{14} & \Pi^r X^{15}  & X^{16} \\
1 & \varepsilon(c_{12}^{\psi^j})\zeta(b) a_{j+1,\psi}a_{j+2,\psi} & \Pi^c(X^{24}; X^{34})  &   \Pi^r\Pi^c(X^{25};X^{35}) &\Pi^c(X^{26};X^{36}) \\
 1 & 1 & U(i-j-1)_\psi & \Pi^r X^{45} & X^{46} \\
 1 & 1 & 1 & g(c_2^{\psi^{i+1}}\otimes U(n) ) & \Pi^c X^{56} \\
1 & 1 & 1 & 1& U(m-i-1) \\
\end{matrix}\right) \right) \\ \\
=f\left(c_{1}\otimes \left(\begin{matrix}
U(j)_{\psi} & \Pi^r(X^{12};X^{13}) & X^{14} & \Pi^r X^{15}  & X^{16} \\
1 & \zeta(b) a_{j+1,\psi}a_{j+2,\psi} & \Pi^c(X^{24}; X^{34})  &   \Pi^r\Pi^c(X^{25};X^{35}) &\Pi^c(X^{26};X^{36}) \\
 1 & 1 & U(i-j-1)_\psi & \Pi^r X^{45} & X^{46} \\
 1 & 1 & 1 & g(c_2^{\psi^{i+1}}\otimes U(n) ) & \Pi^c X^{56} \\
1 & 1 & 1 & 1& U(m-i-1) \\
\end{matrix}\right) \right) 
\end{array}
\end{equation*} The result of \eqref{3.w} now follows using the commutative diagram \eqref{lemeq}. The case of $g=\alpha$ may be verified
in a similar manner. 
\end{proof}

\begin{Thm}\label{Th3} Let $B$ be a commutative $k$-algebra and let $(A,B,C,\psi,\zeta)$ be an entwining structure over $B$. Then, the tuple 
$(\mathscr C_\psi^\bullet(A,B,C,\zeta),\Diamond,\alpha)$ is the data of a weak comp algebra.
\end{Thm}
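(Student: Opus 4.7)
The plan is to verify, one by one, the four defining axioms of a weak comp algebra from Definition \ref{D3.1}. Axiom (1) --- that $f\Diamond_i g=0$ for $i>m-1$ with $f\in \mathscr C^m_\psi$ --- is immediate, since our construction declares $\Diamond_i$ to vanish outside the range $0\leq i<m$. Axioms (2) and (3) are precisely the content of Lemmas \ref{L3.2} and \ref{L3.3}: the partial associativity for $i\leq j<n+i$, and the partial commutativity for $j<i$ under the hypothesis that $g=\alpha$ or $h=\alpha$. So for these I would simply invoke those lemmas.

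The only axiom requiring fresh work is axiom (4), the identity $\alpha\Diamond_0\alpha=\alpha\Diamond_1\alpha$ in $\mathscr C^3_\psi(A,B,C,\zeta)$. I would verify it by a direct evaluation on a generic $c\otimes M$ with $M$ the $3\times 3$ upper-triangular tensor matrix with diagonal $(a_1,a_2,a_3)$ and off-diagonal entries $b_{12},b_{13},b_{23}$. For the left-hand side, the formula for $\Diamond_0$ involves no $\psi$-twist, so a double application of \eqref{alpha}, combined with the counit identity $(\varepsilon\otimes\varepsilon)\Delta=\varepsilon$ and the fact that $\zeta$ lands in the centre of $A$, yields $\varepsilon(c)\,\zeta(b_{12}b_{13}b_{23})\,a_1 a_2 a_3$. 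For the right-hand side, one application of $\psi$ introduces a factor of the form $a_{1,\psi}\,\varepsilon(c_2^\psi)$; the key step will be to collapse this, via the counit--entwining axiom $a_\psi\varepsilon(c^\psi)=\varepsilon(c)a$ from \eqref{ent}, to $\varepsilon(c_2)a_1$, after which the two expressions coincide.

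The hard part is not axiom (4) but the already-proved axioms (2) and (3): each requires careful bookkeeping of how the iterated coproduct $\Delta^{(k)}(c)$ threads through the several $\psi$-twists introduced by successive $\Diamond_i$ operations, and in particular uses the commutativity of \eqref{lemeq} to reconcile different orders of applying $\psi$ to the $a$-entries. Once those lemmas are granted, the theorem becomes an assembly rather than a calculation.
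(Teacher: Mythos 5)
Your proposal is correct and follows essentially the same route as the paper: axiom (1) of Definition \ref{D3.1} is immediate from the definition of $\Diamond_i$, axioms (2) and (3) are exactly Lemmas \ref{L3.2} and \ref{L3.3}, and axiom (4) is checked by direct computation, which the paper leaves as an easy verification. Your sketch of that computation is accurate: both $(\alpha\Diamond_0\alpha)(c\otimes M)$ and $(\alpha\Diamond_1\alpha)(c\otimes M)$ reduce to $\varepsilon(c)\zeta(b_{12}b_{13}b_{23})a_1a_2a_3$, the right-hand side via the counit--entwining identity $a_\psi\varepsilon(c^\psi)=\varepsilon(c)a$.
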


\begin{proof}
The condition (1) in Definition \ref{D3.1} is immediate from the definition of the operations $\Diamond_i$ on $\mathscr C_\psi^\bullet(A,B,C,\zeta)$. The conditions (2) and (3) follow from Lemma \ref{L3.2} and Lemma \ref{L3.3} respectively. Finally, the condition (4), i.e., $\alpha\Diamond_0
\alpha=\alpha\Diamond_1\alpha$, may be verified easily by direct computation. The result follows. 
\end{proof}

\begin{lem}\label{L3.5} For any $m\geq 0$, the differential $\delta^m: \mathscr C_\psi^m(A,B,C,\zeta)\longrightarrow \mathscr C_\psi^{m+1}(A,B,C,\zeta)$ can also be expressed as
\begin{equation}\delta^m(f)=(-1)^{m-1}\alpha\Diamond_0f-\sum_{i=1}^{m}(-1)^{i-1} f\Diamond_{i-1}\alpha + \alpha\Diamond_1f
\end{equation}

\end{lem}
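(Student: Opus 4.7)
My plan is to directly verify the identity by matching each of the three pieces on the right-hand side with one of the three components of $\delta^m(f)$ in \eqref{maindiff}: the left boundary term $\zeta(\prod_j b_{1j})a_{1\psi}\cdot f(c^\psi\otimes\cdots)$, the middle alternating sum over inner multiplications, and the right boundary term $(-1)^{m+1}\zeta(\prod_k b_{k,m+1})f(c\otimes\cdots)\cdot a_{m+1}$. Since $(-1)^{m+1}=(-1)^{m-1}$, the sign in front of $\alpha\Diamond_0f$ is consistent with the last boundary term.

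I would begin with the two outer terms. For $\alpha\Diamond_1 f$, specializing the definition of $\Diamond_i$ with $i=1$, the $2\times2$ target matrix handed to $\alpha$ has $a_{1\psi}$ in the upper-left, the row product $\prod_{j=2}^{m+1}b_{1j}$ as the upper-right $B$-entry, and $f(c_2^\psi\otimes M((2,2);(m+1,m+1)))$ in the lower-right. Applying the formula \eqref{alpha} for $\alpha$ with first argument $c_1$ gives $\varepsilon(c_1)\zeta(\prod_{j=2}^{m+1}b_{1j})\, a_{1\psi}\cdot f(c_2^\psi\otimes\cdots)$; using $\varepsilon(c_1)c_2=c$ together with $k$-linearity of $\psi$ produces exactly the left boundary term of \eqref{maindiff}. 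For $\alpha\Diamond_0 f$ with $i=0$, the upper-left block is empty and the resulting $2\times2$ matrix is $\begin{pmatrix} f(c_2\otimes M((1,1);(m,m))) & \prod_{k=1}^{m}b_{k,m+1}\\ 1 & a_{m+1}\end{pmatrix}$; applying $\alpha$ with first argument $c_1$ and collapsing $\varepsilon(c_1)c_2=c$ recovers the right boundary term.

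For the middle terms, I set $j=i-1$ and compute $f\Diamond_{i-1}\alpha$ for $1\leq i\leq m$. The middle entry of the target matrix handed to $f$ is $\alpha(c_2^{\psi^{i-1}}\otimes M((i,i);(i+1,i+1)))=\varepsilon(c_2^{\psi^{i-1}})\zeta(b_{i,i+1})a_i a_{i+1}$, the upper-left $(i-1)\times(i-1)$ block is $M((1,1);(i-1,i-1))_\psi$, and the $\Pi^r$/$\Pi^c$ operations on the $B$-strip around position $i$ produce the merged entries $b_{k,i}b_{k,i+1}$ and $b_{i,k}b_{i+1,k}$ that appear in the $i$-th middle term of \eqref{maindiff}. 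The one nontrivial simplification is removing the $(i-1)$-fold twist: by iterating the counit axiom $a_\psi\varepsilon(c^\psi)=\varepsilon(c)a$ in \eqref{ent}, one obtains $\varepsilon(c_2^{\psi^{i-1}})\cdot (a_1\cdots a_{i-1})_\psi=\varepsilon(c_2)(a_1\cdots a_{i-1})$ at the level of the iterated coaction $\rho^{i-1}_R$ from \eqref{coac}; combined with $\varepsilon(c_2)c_1=c$ and multilinearity of $f$, this removes every $\psi$ in front of the $i-1$ diagonal entries and transforms $c_1$ into $c$. The sign $-(-1)^{i-1}=(-1)^i$ then matches the $i$-th summand in the middle of \eqref{maindiff}.

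The only real obstacle is bookkeeping: carefully tracking the iterated entwinings introduced by $\Diamond_{i-1}$ against the absence of such twistings in the middle terms of \eqref{maindiff}, and verifying that the $\Pi^r,\Pi^c$ operations produce precisely the $B$-products appearing in the differential. Once these iterated counit reductions are established, no further input beyond the entwining axioms of Definition \ref{D2.1} is required, and adding up the three contributions yields the claimed expression for $\delta^m(f)$.
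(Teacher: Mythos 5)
Your proposal is correct and follows essentially the same route as the paper's proof: it evaluates $\alpha\Diamond_0 f$, $\alpha\Diamond_1 f$ and $f\Diamond_{i-1}\alpha$ on a tensor matrix, uses the counit identity $\varepsilon(c_1)c_2=c$ (resp.\ $c_1\varepsilon(c_2)=c$) for the two boundary terms, and removes the $(i-1)$-fold entwining in the middle terms by iterating $a_\psi\varepsilon(c^\psi)=\varepsilon(c)a$, exactly as in \eqref{etns}. The sign bookkeeping, including $(-1)^{m-1}=(-1)^{m+1}$ and $-(-1)^{i-1}=(-1)^i$, also matches the paper.
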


\begin{proof}
As in \eqref{tensmat}, we will write an element of $C\otimes A^{\otimes m+1}\otimes B^{\otimes \frac{(m+1)m}{2}}$ as a ``tensor matrix''
\begin{equation}\label{tensmatw}
c\otimes M=c\bigotimes \begin{pmatrix}
a_1 & b_{12} & b_{13} & b_{14} & ...& b_{1,m-1} & b_{1,m} & b_{1,m+1} \\
1 & a_2 & b_{23}  & b_{24} &  ...& b_{2,m-1} & b_{2,m} & b_{2,m+1} \\
1 & 1 & a_3  & b_{34} &  ...& b_{3,m-1} & b_{3,m} & b_{3,m+1} \\
. & . & .  & . &  ...& . & . & .\\
1 & 1 & 1 & 1 &  ...& 1 & a_{m} & b_{m,m+1}\\
1 & 1 & 1 & 1 &  ...& 1 & 1 & a_{m+1} \\
\end{pmatrix}
\end{equation} with $c\in C$, $a_i\in A$, $b_{ij}\in B$ and $1\in k$. It is easily observed that 
\begin{equation}
\begin{array}{ll}
(\alpha\Diamond_0 f)(c\otimes M) &=\zeta\left(\underset{k=1}{\overset{m}{\prod}}b_{k,m+1}\right)f\left(c\bigotimes \begin{pmatrix}
a_1 & b_{12} & b_{13} & b_{14} & ...& b_{1,m-1} & b_{1,m}  \\
1 & a_2 & b_{23}  & b_{24} &  ...& b_{2,m-1} & b_{2,m}   \\
1 & 1 & a_3  & b_{34} &  ...& b_{3,m-1} & b_{3,m}   \\
. & . & .  & . &  ...& . & . \\
1 & 1 & 1 & 1 &  ...& 1 & a_{m}  \\
\end{pmatrix}\right)\cdot a_{m+1}
\end{array}
\end{equation}
\begin{equation}
\begin{array}{ll}
(\alpha\Diamond_1 f)(c\otimes M) & = \varepsilon(c_1)\zeta\left( \underset{j=2}{\overset{m+1}{\prod}}b_{1j}\right) a_{1\psi}\cdot f\left(c_2^\psi\bigotimes \begin{pmatrix}
 a_2 & b_{23}  & b_{24} &  ...& b_{2,m-1} & b_{2,m} & b_{2,m+1} \\
 1 & a_3  & b_{34} &  ...& b_{3,m-1} & b_{3,m} & b_{3,m+1} \\
 . & .  & . &  ...& . & . & .\\
 1 & 1 & 1 &  ...& 1 & a_{m} & b_{m,m+1}\\
 1 & 1 & 1 &  ...& 1 & 1 & a_{m+1} \\
\end{pmatrix}\right)\\ \\ 
 & = \zeta\left( \underset{j=2}{\overset{m+1}{\prod}}b_{1j}\right) a_{1\psi}\cdot f\left(c^\psi\bigotimes \begin{pmatrix}
 a_2 & b_{23}  & b_{24} &  ...& b_{2,m-1} & b_{2,m} & b_{2,m+1} \\
 1 & a_3  & b_{34} &  ...& b_{3,m-1} & b_{3,m} & b_{3,m+1} \\
 . & .  & . &  ...& . & . & .\\
 1 & 1 & 1 &  ...& 1 & a_{m} & b_{m,m+1}\\
 1 & 1 & 1 &  ...& 1 & 1 & a_{m+1}
\end{pmatrix}\right)
\end{array}
\end{equation} On the other hand, we have
\begin{equation}\label{etns}
\begin{array}{ll}
(f\Diamond_{i-1}\alpha)(c\otimes M) &= f\left(c_1\bigotimes \begin{pmatrix}
a_{1\psi} & b_{12} & \dots & b_{1i}b_{1,i+1} & \dots & b_{1,m+1}\\
1 & a_{2\psi}   & \dots & b_{2i}b_{2,i+1} & \dots & b_{2,m+1} \\
. & .   & \dots &\dots & \dots & . \\
1 & 1  & \dots & \varepsilon(c_2^{\psi^{i-1}})\zeta(b_{i,i+1})a_{i}a_{i+1} &  \dots & b_{i,m+1}b_{i+1,m+1}\\
. & .  & \dots & \dots & \dots & . \\
1 & 1  & \dots &\dots &  \dots & b_{m,m+1}\\
1 & 1  & \dots & \dots &\dots & a_{m+1} 
\end{pmatrix}\right) 
\end{array}
\end{equation}
\begin{equation*}
\begin{array}{ll}
= f\left(c_1\bigotimes \begin{pmatrix}
a_{1} & b_{12} & \dots & b_{1i}b_{1,i+1} & \dots & b_{1,m+1}\\
1 & a_{2}   & \dots & b_{2i}b_{2,i+1} & \dots & b_{2,m+1} \\
. & .   & \dots &\dots & \dots & . \\
1 & 1  & \dots & \varepsilon(c_2)\zeta(b_{i,i+1})a_{i}a_{i+1} &  \dots & b_{i,m+1}b_{i+1,m+1}\\
. & .  & \dots & \dots & \dots & . \\
1 & 1  & \dots &\dots &  \dots & b_{m,m+1}\\
1 & 1  & \dots & \dots &\dots & a_{m+1} 
\end{pmatrix}\right) 
\end{array}
\end{equation*}
\begin{equation*}
\begin{array}{ll}
&= f\left(c\bigotimes \begin{pmatrix}
a_{1} & b_{12} & \dots & b_{1i}b_{1,i+1} & \dots & b_{1,m+1}\\
1 & a_{2}   & \dots & b_{2i}b_{2,i+1} & \dots & b_{2,m+1} \\
. & .   & \dots &\dots & \dots & . \\
1 & 1  & \dots &\zeta(b_{i,i+1})a_{i}a_{i+1} &  \dots & b_{i,m+1}b_{i+1,m+1}\\
. & .  & \dots & \dots & \dots & . \\
1 & 1  & \dots &\dots &  \dots & b_{m,m+1}\\
1 & 1  & \dots & \dots &\dots & a_{m+1} \\
\end{pmatrix}\right) \\ 
\end{array}
\end{equation*} Comparing with the expression for the differential $\delta^m: \mathscr C_\psi^m(A,B,C,\zeta)\longrightarrow \mathscr C_\psi^{m+1}(A,B,C,\zeta)$ in \eqref{maindiff}, the result is clear. 
\end{proof}

We now come back to the two cup products $\cup$ and $\sqcup$ on $\mathscr C_\psi^\bullet(A,B,C,\zeta)$ defined in \eqref{cup1} and \eqref{cup2} and describe them in terms of the operations $\Diamond_i$. This will allow us to produce associative algebra structures on  $\mathscr C_\psi^\bullet(A,B,C,\zeta)$ equipped with graded differential $\delta$ by using the general properties of weak comp algebras. 

\begin{thm}\label{P3.6}  Let $B$ be a commutative $k$-algebra and let $(A,B,C,\psi,\zeta)$ be an entwining structure over $B$.  Then, the operation
$\cup$  on $\mathscr C_\psi^\bullet(A,B,C,\zeta)$ may also be described as follows: for $f\in \mathscr C_\psi^m(A,B,C,\zeta)$, $g\in 
\mathscr C_\psi^n(A,B,C,\zeta)$, we have
\begin{equation}\label{cupcom}
f\cup g=(\alpha\Diamond_0 f)\Diamond_mg 
\end{equation}
Further, $\mathscr C^\bullet_\psi(A,B,C,\zeta)$ is a graded associative algebra for the $\cup$ product. The differential $\delta$ is a graded derivation
of degree $1$ for this algebra structure, i.e., 
\begin{equation}\label{diffcup}
\delta^{m+n}(f\cup g)=\delta^m(f)\cup g + (-1)^mf\cup \delta^n(g)
\end{equation}
\end{thm}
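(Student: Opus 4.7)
First, I would verify the alternate formula $f \cup g = (\alpha \Diamond_0 f) \Diamond_m g$ by direct computation. A short unpacking using the defining formula $\alpha(c \otimes [\,\cdot\,]) = \varepsilon(c)\zeta(b_{12})a_1 a_2$ and the counit identity $\varepsilon(c_1)c_2 = c$ gives $(\alpha \Diamond_0 f)(c \otimes N) = \zeta(\prod_{k=1}^m b_{k,m+1}) \cdot f(c \otimes N((1,1);(m,m))) \cdot a_{m+1}$, which is exactly the ``last-term'' pattern of the differential in \eqref{maindiff}. Composing with $\Diamond_m g$ then reassembles the tensor-matrix formula \eqref{cup1} for $f \cup g$ factor by factor, with the splitting $c_1 \otimes c_2^{\psi^m}$ arising from the inner $g$ and the $\zeta(\Pi M((1,m+1);(m,m+n)))$ factor coming from the row-product $\Pi^r M((1,m+1);(m,m+n))$.

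With \eqref{cupcom} in hand, graded associativity of $\cup$ is a purely formal manipulation inside the weak comp algebra guaranteed by Theorem \ref{Th3}. Starting from $(f \cup g) \cup h = (\alpha \Diamond_0 (f \cup g)) \Diamond_{m+n} h$, axiom (2) of Definition \ref{D3.1} commutes $\alpha \Diamond_0$ past $\Diamond_m g$, and a second application rewrites $\alpha \Diamond_0 (\alpha \Diamond_0 f)$ as $(\alpha \Diamond_0 \alpha) \Diamond_0 f$. The unit axiom (4) replaces $\alpha \Diamond_0 \alpha$ by $\alpha \Diamond_1 \alpha$, and axiom (3) (whose hypothesis is met, the middle argument being $\alpha$) converts $(\alpha \Diamond_1 \alpha) \Diamond_0 f$ into $(\alpha \Diamond_0 f) \Diamond_m \alpha$. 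Two final applications of axiom (2) produce $((\alpha \Diamond_0 f) \Diamond_m \alpha) \Diamond_m g = (\alpha \Diamond_0 f) \Diamond_m (\alpha \Diamond_0 g)$ and $((\alpha \Diamond_0 f) \Diamond_m (\alpha \Diamond_0 g)) \Diamond_{m+n} h = (\alpha \Diamond_0 f) \Diamond_m (g \cup h) = f \cup (g \cup h)$.

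For the graded Leibniz identity \eqref{diffcup}, I would substitute Lemma \ref{L3.5} into $\delta(f \cup g)$ and translate every summand via \eqref{cupcom} using the weak comp axioms. The sum $-\sum_{k=1}^{m+n}(-1)^{k-1}(f \cup g)\Diamond_{k-1}\alpha$ splits into two ranges: for $1 \leq k \leq m$, axiom (3) (applicable since the last argument is $\alpha$) rewrites $(f \cup g)\Diamond_{k-1}\alpha$ as $((\alpha \Diamond_0 f)\Diamond_{k-1}\alpha)\Diamond_{m+1}g = (f \Diamond_{k-1}\alpha) \cup g$, matching a summand of $\delta f \cup g$; for $m+1 \leq k \leq m+n$, axiom (2) rewrites it as $(\alpha \Diamond_0 f)\Diamond_m (g \Diamond_{k-m-1}\alpha) = f \cup (g \Diamond_{k-m-1}\alpha)$, matching a summand of $(-1)^m f \cup \delta g$ after the expected index shift produces the sign $(-1)^m$. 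Similarly, the two extremal terms simplify as $\alpha \Diamond_0 (f \cup g) = f \cup (\alpha \Diamond_0 g)$ (from the associativity chain) and $\alpha \Diamond_1 (f \cup g) = (\alpha \Diamond_1 f) \cup g$ (by axioms (2) and (4)), matching the $\alpha \Diamond_0 g$ and $\alpha \Diamond_1 f$ pieces of $\delta g$ and $\delta f$ respectively.

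The principal obstacle is the appearance, after this accounting, of two leftover terms on the right-hand side, $(-1)^{m-1}(\alpha \Diamond_0 f) \cup g$ and $(-1)^m f \cup (\alpha \Diamond_1 g)$, with no direct counterpart on the left-hand side. The Leibniz identity therefore reduces to showing $(\alpha \Diamond_0 f) \cup g = f \cup (\alpha \Diamond_1 g)$, so that these two leftovers cancel. This identity is exactly what the weak comp algebra structure delivers: using $\alpha \Diamond_0 (\alpha \Diamond_0 f) = (\alpha \Diamond_0 f) \Diamond_m \alpha$ (already derived in the associativity step via axioms (4) and (3)) and one further application of axiom (2), one obtains $(\alpha \Diamond_0 f) \cup g = ((\alpha \Diamond_0 f) \Diamond_m \alpha) \Diamond_{m+1} g = (\alpha \Diamond_0 f) \Diamond_m (\alpha \Diamond_1 g) = f \cup (\alpha \Diamond_1 g)$. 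This single identity, hinging on the unit axiom $\alpha \Diamond_0 \alpha = \alpha \Diamond_1 \alpha$, is where the ``weak'' in weak comp algebra does its essential work, and it is the only genuinely delicate step in the verification.
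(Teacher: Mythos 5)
Your proposal is correct and follows essentially the same route as the paper: you verify \eqref{cupcom} by the same direct tensor-matrix computation, and then obtain associativity and the Leibniz rule \eqref{diffcup} from the weak comp algebra structure of Theorem \ref{Th3} together with the expression for $\delta$ in Lemma \ref{L3.5}. The only difference is that where the paper simply cites the general weak comp algebra results \cite[Propositions 4.5 and 4.7]{Brz2}, you reprove them from the axioms of Definition \ref{D3.1}, and your bookkeeping there is sound: the index constraints in axioms (2) and (3) are satisfied at each application, and the cancellation $(\alpha\Diamond_0 f)\cup g=f\cup(\alpha\Diamond_1 g)$, forced through axiom (4), is exactly the crux of Brzezi\'{n}ski's general argument, so your write-up is a correct self-contained inlining of those citations.
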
 

\begin{proof}
We write a square tensor matrix of dimension $(m+n)$ as a $2\times 2$ matrix with the following blocks
\begin{equation}
M=\begin{pmatrix} U(m) & X^{12}  \\ 1 & U(n) \\
\end{pmatrix}
\end{equation} where each $U(k)$ is a square matrix of dimension $k$. For $c\in C$, we have
\begin{equation*}
\begin{array}{ll}
&((\alpha\Diamond_0 f)\Diamond_mg)(c\otimes M) =\alpha\left(c_{11}\otimes 
\left(
\begin{pmatrix}
f(c_{12}\otimes U(m)_\psi)& \Pi^c\Pi^rX^{12} \\
1 & g(c_2^{\psi^m}\otimes U(n))\\
\end{pmatrix}\right)\right)\\ 
&=\zeta\left(\Pi^c\Pi^rX^{12}\right)\varepsilon(c_{11})f(c_{12}\otimes U(m)_\psi) g(c_2^{\psi^m}\otimes U(n))=\zeta\left(\Pi^c\Pi^rX^{12}\right)f(c_{1}\otimes U(m)_\psi) g(c_2^{\psi^m}\otimes U(n))
\end{array}
\end{equation*} It is clear that this coincides with the expression for $(f\cup g)(c\otimes M)$ as defined in \eqref{cup1}. This proves \eqref{cupcom}. From Theorem \ref{Th3}, we know that $(\mathscr C^\bullet_\psi(A,B,C,\zeta),\Diamond,\alpha)$ is a weak comp algebra. It now follows from the general properties of a weak comp algebra in \cite[Proposition 4.5]{Brz2} that $\cup$ defines a graded associative algebra structure on $\mathscr C^\bullet_\psi(A,B,C,\zeta)$. Using the expression for the differential $\delta$ given in Lemma \ref{L3.5}, it also follows from the general properties of a weak
comp algebra (see \cite[Proposition 4.7]{Brz2}) that $\delta$ is a graded derivation of degree $1$ for the cup product $\cup$. 
\end{proof}

\begin{thm}\label{P3.7}  Let $B$ be a commutative $k$-algebra and let $(A,B,C,\psi,\zeta)$ be an entwining structure over $B$.  Then, the operation
$\sqcup$  on $\mathscr C_\psi^\bullet(A,B,C,\zeta)$ may also be described as follows: for $f\in \mathscr C_\psi^m(A,B,C,\zeta)$, $g\in 
\mathscr C_\psi^n(A,B,C,\zeta)$, we have
\begin{equation}\label{sqcupcom}
f\sqcup g=(\alpha\Diamond_1 g)\Diamond_0f
\end{equation}
Further, $\mathscr C^\bullet_\psi(A,B,C,\zeta)$ is an associative algebra with the product $\sqcup$. The differential $\delta$ is a graded derivation
of degree $1$ for this algebra structure, i.e., 
\begin{equation}\label{diffsqcup}
\delta^{m+n}(f\sqcup g)=\delta^m(f)\sqcup g + (-1)^mf\sqcup \delta^n(g)
\end{equation}
\end{thm}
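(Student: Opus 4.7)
The plan is to closely mirror the proof of Proposition~\ref{P3.6}. First I will verify the formula $f\sqcup g=(\alpha\Diamond_1 g)\Diamond_0 f$ by a direct block-matrix computation; then I will deduce the associativity of $\sqcup$ and the graded derivation property of $\delta$ from the weak comp algebra structure established in Theorem~\ref{Th3}.

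Write $M\in A^{\otimes (m+n)}\otimes B^{\otimes \frac{(m+n)(m+n-1)}{2}}$ as the $2\times 2$ block matrix
\[
M=\begin{pmatrix} U(m) & X^{12} \\ 1 & U(n) \end{pmatrix},
\]
so that $X^{12}$ is the $m\times n$ block of $B$-entries. Unfolding the outer $\Diamond_0$ in $(\alpha\Diamond_1 g)\Diamond_0 f$, with outer function $\alpha\Diamond_1 g\in\mathscr C_\psi^{n+1}$ and inner function $f\in\mathscr C_\psi^m$, gives $(\alpha\Diamond_1 g)(c_1\otimes N)$, where
\[
N=\begin{pmatrix} f(c_2\otimes U(m)) & \Pi^c X^{12} \\ 1 & U(n) \end{pmatrix}
\]
is an $(n+1)\times (n+1)$ tensor matrix. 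Next, unfolding $\Diamond_1$ with outer $\alpha\in\mathscr C_\psi^2$ and inner $g\in\mathscr C_\psi^n$ produces $\alpha(c_{11}\otimes N')$ with
\[
N'=\begin{pmatrix} f(c_2\otimes U(m))_\psi & \Pi X^{12} \\ 1 & g(c_{12}^\psi\otimes U(n)) \end{pmatrix}
\]
(using $\Pi^r(\Pi^c X^{12})=\Pi X^{12}$). Applying the defining formula \eqref{alpha} of $\alpha$ yields the scalar $\varepsilon(c_{11})\cdot\zeta(\Pi X^{12})\cdot f(c_2\otimes U(m))_\psi\cdot g(c_{12}^\psi\otimes U(n))$; the counit/coassociativity identity $\varepsilon(c_{11})c_{12}\otimes c_2=c_1\otimes c_2$ then collapses this to $\zeta(\Pi X^{12})\cdot f(c_2\otimes U(m))_\psi\cdot g(c_1^\psi\otimes U(n))$, which is precisely $(f\sqcup g)(c\otimes M)$ as defined in \eqref{cup2}.

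For the second assertion, Theorem~\ref{Th3} supplies the weak comp algebra structure on $\mathscr C_\psi^\bullet(A,B,C,\zeta)$, and I would invoke \cite[Propositions~4.5 and~4.7]{Brz2}: the product $\sqcup=(\alpha\Diamond_1 g)\Diamond_0 f$ is the natural dual of the product $\cup=(\alpha\Diamond_0 f)\Diamond_m g$ used in Proposition~\ref{P3.6}, and the same general machinery applies. Using axioms (2)--(4) of Definition~\ref{D3.1} together with the expression for $\delta$ provided by Lemma~\ref{L3.5}, the associativity $(f\sqcup g)\sqcup h=f\sqcup(g\sqcup h)$ and the derivation identity \eqref{diffsqcup} follow by the same formal manipulations that give \eqref{cupcom} and \eqref{diffcup}.

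The main obstacle is the Sweedler/entwining bookkeeping in the block computation above: one has to track how the iterated coproduct $c_{11}\otimes c_{12}\otimes c_2$ interleaves with the $\psi$-twists introduced by the two nested $\Diamond$-operations, and one needs the coassociativity-entwining compatibility $a_\psi\otimes \Delta(d^\psi)=a_{\psi\psi}\otimes d_1^\psi\otimes d_2^\psi$ from \eqref{ent} to commute $\psi$ past the coproduct. This is the same type of analysis carried out in Lemma~\ref{L3.2}, so with that template in hand the remaining verification is routine.
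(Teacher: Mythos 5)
Your proposal is correct and follows essentially the same route as the paper: the identical $2\times 2$ block computation unfolding $(\alpha\Diamond_1 g)\Diamond_0 f$, the counit collapse $\varepsilon(c_{11})c_{12}=c_1$ to recover the formula \eqref{cup2}, and the appeal to Theorem \ref{Th3} together with \cite[Propositions 4.5 and 4.7]{Brz2} for associativity and the derivation identity \eqref{diffsqcup}. No gaps.
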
 

\begin{proof}
We maintain the notation from the proof of Proposition \ref{P3.6}. For $c\in C$, we have
\begin{equation*}
\begin{array}{ll}
((\alpha\Diamond_1 g)\Diamond_0f)(c\otimes M)& = (\alpha\Diamond_1 g)\left(
c_1\otimes  
\begin{pmatrix}
f(c_2\otimes U(m)) & \Pi^c X^{12} \\ 
1 & U(n) \\
\end{pmatrix}
\right)= \alpha\left(c_{11}\otimes  
\begin{pmatrix}
f(c_2\otimes U(m))_\psi & \Pi^r \Pi^c X^{12} \\ 
1 & g(c_{12}^\psi\otimes U(n)) \\
\end{pmatrix} \right)\\ \\
& = \zeta\left(\Pi^r \Pi^c X^{12}\right)\varepsilon(c_{11})f(c_2\otimes U(m))_\psi g(c_{12}^\psi\otimes U(n))= \zeta\left(\Pi^r \Pi^c X^{12}\right)f(c_2\otimes U(m))_\psi g(c_{1}^\psi\otimes U(n)) 
\end{array}
\end{equation*}  It is clear that this coincides with the expression for $(f\sqcup g)(c\otimes M)$ as defined in \eqref{cup2}. This proves \eqref{sqcupcom}. The rest follows as in the proof of Proposition \ref{P3.6} by using the general properties of a weak comp algebra (see \cite[Proposition 4.5 \& Proposition 4.7]{Brz2}).
\end{proof}

\begin{cor}\label{C3.8}  Let $B$ be a commutative $k$-algebra and let $(A,B,C,\psi,\zeta)$ be an entwining structure over $B$. Then, 

\smallskip
(a) There are cup products on the secondary Hochschild cohomology of the entwining structure
\begin{equation*}
\begin{array}{c}
\cup : HH^m_\psi(A,B,C,\zeta)\otimes HH_\psi^n(A,B,C,\zeta) \longrightarrow HH_\psi^{m+n}(A,B,C,\zeta) \\ \sqcup: HH^m_\psi(A,B,C,\zeta)\otimes HH_\psi^n(A,B,C,\zeta) \longrightarrow HH_\psi^{m+n}(A,B,C,\zeta)
\end{array}
\end{equation*} induced by the corresponding cup product structures on $\mathscr C_\psi^\bullet(A,B,C,\zeta)$. 

\smallskip
(b) For cohomology classes $\bar{f}\in HH^m_\psi(A,B,C,\zeta)$ and $\bar{g}\in HH^n_\psi(A,B,C,\zeta)$, we have
\begin{equation*}
\bar{f}\cup \bar{g}=(-1)^{mn}\bar{g}\sqcup \bar{f} 
\end{equation*}
\end{cor}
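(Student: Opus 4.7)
The plan for part (a) is direct: Propositions \ref{P3.6} and \ref{P3.7} show that $\delta$ is a graded derivation of degree $1$ with respect to both $\cup$ and $\sqcup$. Hence both products send pairs of cocycles to cocycles and send any product in which one factor is a coboundary to a coboundary, so they descend to well-defined products on $HH_\psi^\bullet(A,B,C,\zeta)$.

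For part (b), the task is to realise the graded commutator $f\cup g-(-1)^{mn}\,g\sqcup f$ as an explicit coboundary whenever $f,g$ are cocycles. My plan is to mimic the classical Gerstenhaber argument inside the weak comp algebra structure provided by Theorem \ref{Th3}. For $f\in\mathscr C_\psi^m(A,B,C,\zeta)$ and $g\in\mathscr C_\psi^n(A,B,C,\zeta)$, I set
$$
f\diamond g:=\sum_{i=0}^{m-1}(-1)^{i(n-1)}\,f\Diamond_i g\ \in\ \mathscr C_\psi^{m+n-1}(A,B,C,\zeta).
$$
Using Lemma \ref{L3.5} to replace each occurrence of $\delta$ by a signed sum of $\Diamond$-operations involving $\alpha$, I expand $\delta(f\diamond g)$, $(\delta f)\diamond g$, and $f\diamond(\delta g)$, and then use axiom (2) of Definition \ref{D3.1} (genuine associativity, Lemma \ref{L3.2}) together with axiom (3) (the weak associativity involving $\alpha$, Lemma \ref{L3.3}) to match up and cancel terms. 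The outcome I expect is a telescoping identity of the form
$$
\delta(f\diamond g)=(\delta f)\diamond g+(-1)^{m-1}f\diamond(\delta g)+(-1)^{m-1}\bigl((\alpha\Diamond_0 f)\Diamond_m g-(-1)^{mn}(\alpha\Diamond_1 f)\Diamond_0 g\bigr).
$$
By the alternative descriptions \eqref{cupcom} and \eqref{sqcupcom} of the two cup products, the parenthesised expression is exactly $f\cup g-(-1)^{mn}\,g\sqcup f$, so specialising to cocycles $f,g$ shows that this graded commutator is a coboundary, which yields the desired identity on cohomology.

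The principal obstacle is combinatorial rather than conceptual: tracking the signs through the multiple nested sums and verifying that every invocation of axiom (3) is legal. Axiom (3) applies only when one of the operands is $\alpha$, and the naive commutativity identity $(x\Diamond_i y)\Diamond_j z=(x\Diamond_j z)\Diamond_{i+p-1}y$ does fail for generic elements of a weak comp algebra. I therefore have to check that in every step of the telescoping where the transposition identity is used, one of $y$ or $z$ is $\alpha$; these uses arise precisely from the $f\Diamond_i(g\Diamond_? \alpha)$ summands in the expansion of $f\diamond\delta g$ and from the $(f\Diamond_? \alpha)\Diamond_i g$ summands in $\delta f\diamond g$, so the $\alpha$ restriction is always satisfied. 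All remaining cancellations, in particular those matching $\alpha\Diamond_?(f\Diamond_i g)$ against $f\Diamond_i(\alpha\Diamond_? g)$, are handled by the unrestricted associativity of axiom (2). The calculation is tedious but, once the sign conventions are fixed, mechanical.
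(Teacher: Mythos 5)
Your proposal is correct and in substance follows the paper's route: part (a) is obtained, exactly as in the paper, from the Leibniz rules \eqref{diffcup} and \eqref{diffsqcup} of Propositions \ref{P3.6} and \ref{P3.7}, while for part (b) the paper simply invokes the general weak comp algebra statement \cite[Corollary 4.9]{Brz2}, whose proof is precisely the pre-Lie homotopy argument you outline, so you are unpacking the cited result rather than taking a different path. Your key worry checks out: every disjoint-case transposition needed in the telescoping involves $\alpha$, and note that besides the $j<i$ case the case $j\geq n+i$ with $h=\alpha$ is also covered by axiom (3) of Definition \ref{D3.1}, read with the inner factor equal to $\alpha$ (i.e. as the identity $(f\Diamond_{j-n+1}\alpha)\Diamond_i g=(f\Diamond_i g)\Diamond_j\alpha$), so the weak comp axioms suffice for all the rewritings, leaving only the sign bookkeeping you already flag as routine.
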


\begin{proof} From Propositions \ref{P3.6} and \ref{P3.7}, we know that the differential $\delta$ is a graded derivation for the associative
algebra structures $(\mathscr C_\psi^\bullet(A,B,C,\zeta),\cup)$ and $(\mathscr C_\psi^\bullet(A,B,C,\zeta),\sqcup)$. The result of (a) is now clear from 
\eqref{diffcup} and \eqref{diffsqcup}. The result of (b) now follows from the general property of a weak comp algebra (see \cite[Corollary 4.9]{Brz2}).  
\end{proof}

\section{A graded commutative cup product}

\smallskip
In this section, we will consider a subcomplex of  $\mathscr C_\psi^\bullet(A,B,C,\zeta)$ such that the two cup products on cohomology in Corollary \ref{C3.8} coincide, thus determining a graded commutative structure. We begin
by recalling the morphism 
\begin{equation}\label{coac4}
\rho^n_R: C\otimes A^{\otimes n} \xrightarrow{\textrm{ }\Delta\otimes A^{\otimes n}} C\otimes C\otimes A^{\otimes n}
\xrightarrow{C\otimes ((A^{\otimes n-1}\otimes \psi)\circ (A^{\otimes n-2}\otimes \psi \otimes A)\circ \dots (\psi\otimes A^{\otimes n-1}))} C\otimes A^{\otimes n}\otimes C
\end{equation} described in Section 3. We also set
\begin{equation}\label{coac41}
\rho^n_L:   C\otimes A^{\otimes n} \xrightarrow{\textrm{ }\Delta\otimes A^{\otimes n}} C\otimes C\otimes A^{\otimes n}
\end{equation} In fact, $C\otimes A^{\otimes n}$ is a $C$-bicomodule with left coaction $\rho^n_L$ and right coaction $\rho^n_R$ (see \cite[$\S$ 1]{Brz2}).  By  abuse
of notation, we will also write
\begin{equation}\label{4.3prq}
\begin{array}{c}
\rho^n_L:C\otimes A^{\otimes n}\otimes B^{\otimes \frac{n(n-1)}{2}} \quad \xrightarrow{\rho^n_L\otimes B^{\otimes \frac{n(n-1)}{2}}}\quad C\otimes C\otimes A^{\otimes n}\otimes B^{\otimes \frac{n(n-1)}{2}} \\ \\
\rho^n_R: C\otimes A^{\otimes n}\otimes B^{\otimes \frac{n(n-1)}{2}} \quad \xrightarrow{\rho^n_R\otimes B^{\otimes \frac{n(n-1)}{2}}}\quad  C\otimes A^{\otimes n}\otimes C\otimes  B^{\otimes \frac{n(n-1)}{2}} = C\otimes A^{\otimes n}\otimes  B^{\otimes \frac{n(n-1)}{2}} \otimes C
\end{array}
\end{equation}

\smallskip Motivated by
\cite[$\S$ 5]{Brz2}, we now introduce an ``equivariant subcomplex''   $\mathscr E_\psi^\bullet(A,B,C,\zeta)$ of  $\mathscr C_\psi^\bullet(A,B,C,\zeta)$ as follows: an element
$f\in \mathscr C^n_\psi(A,B,C,\zeta)=Hom_k(C\otimes A^{\otimes n}\otimes B^{\otimes \frac{n(n-1)}{2}},A)$ lies in $\mathscr E_\psi^n(A,B,C,\zeta)$ if it satisfies
\begin{equation}\label{eqsub}
\begin{CD}
C\otimes A^{\otimes n}\otimes B^{\otimes \frac{n(n-1)}{2}} @>\rho^n_R >> C\otimes A^{\otimes n}\otimes  B^{\otimes \frac{n(n-1)}{2}}  \otimes C \\
@V\rho_L^nVV @VVf\otimes id  V \\
C\otimes C\otimes A^{\otimes n}\otimes B^{\otimes \frac{n(n-1)}{2}}  @>\psi\circ (id\otimes  f)>> A\otimes C \\
\end{CD}
\end{equation}

\begin{lem}\label{L4.1} The element $\alpha\in \mathscr E^2_\psi(A,B,C,\zeta)$.

\end{lem}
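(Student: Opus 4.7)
The plan is to verify directly that $\alpha$ makes the square \eqref{eqsub} commute on a general element $c\otimes a_1\otimes a_2\otimes b \in C\otimes A^{\otimes 2}\otimes B$, by computing both composites and matching them using the axioms of an entwining and the central condition \eqref{eq2.2}. Since $\alpha$ is defined via $\varepsilon$ and $\zeta$, almost every manipulation will be an application of either the counit property of $\Delta$, the multiplicativity of $\psi$ in its algebra argument, or the identity $\psi(c\otimes \zeta(b))=\zeta(b)\otimes c$.

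First I would compute the composite along the top-right path. By definition of $\rho^2_R$ (\eqref{coac4}, \eqref{4.3prq}), the element $c\otimes a_1\otimes a_2\otimes b$ maps first to $c_1\otimes c_2\otimes a_1\otimes a_2\otimes b$ and then, after two applications of $\psi$, to $c_1\otimes a_{1\psi}\otimes a_{2\psi}\otimes b\otimes c_2^{\psi\psi}$. Applying $\alpha\otimes id$ and using the definition of $\alpha$ yields
\begin{equation*}
\varepsilon(c_1)\,\zeta(b)\,a_{1\psi}a_{2\psi}\otimes c_2^{\psi\psi}
=\zeta(b)\,a_{1\psi}a_{2\psi}\otimes c^{\psi\psi},
\end{equation*}
where the last equality uses the counit identity $\varepsilon(c_1)c_2=c$ (the factors $\zeta(b), a_{1\psi}, a_{2\psi}$ depend only on $c_2$, so $\varepsilon(c_1)$ can be absorbed into $c_2^{\psi\psi}$).

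Next I would compute the left-bottom composite. The coaction $\rho^2_L$ sends $c\otimes a_1\otimes a_2\otimes b$ to $c_1\otimes c_2\otimes a_1\otimes a_2\otimes b$, and applying $id\otimes \alpha$ gives $c_1\otimes \varepsilon(c_2)\zeta(b)a_1a_2 = c\otimes \zeta(b)a_1a_2$. It remains to apply $\psi$ and simplify $\psi(c\otimes \zeta(b)a_1a_2)$. By the multiplicative axiom in \eqref{ent} applied twice, this equals $\zeta(b)_\psi\, a_{1\psi}\, a_{2\psi}\otimes c^{\psi\psi\psi}$ (three iterations of the entwining). The key step is now to invoke \eqref{eq2.2}: the outermost application of $\psi$ acts on $\zeta(b)$, and $\psi(d\otimes \zeta(b))=\zeta(b)\otimes d$ for every $d\in C$, so that iteration contributes $\zeta(b)\otimes(-)$ trivially and strips one $\psi$ off $c$. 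What remains is exactly $\zeta(b)\, a_{1\psi}\, a_{2\psi}\otimes c^{\psi\psi}$, matching the top-right composite.

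The only real subtlety is keeping track of which $\psi$-copy acts on which algebra factor when invoking \eqref{eq2.2} to peel off the $\zeta(b)$ contribution from a triple $\psi$-iteration; everything else is routine Sweedler-style bookkeeping with $\Delta$, $\varepsilon$, and the multiplicative axiom of the entwining. This confirms that the diagram \eqref{eqsub} commutes for $n=2$ with $f=\alpha$, so $\alpha\in \mathscr E^2_\psi(A,B,C,\zeta)$.
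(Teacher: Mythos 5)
Your verification is correct and matches the paper's own argument: both compute the two composites of \eqref{eqsub} on $c\otimes a_1\otimes a_2\otimes b_{12}$ directly, using the counit identity, the multiplicativity of $\psi$ from \eqref{ent}, and the condition $\psi(c\otimes\zeta(b))=\zeta(b)\otimes c$ of \eqref{eq2.2}. The only cosmetic difference is that you fully expand the left-bottom composite to $\zeta(b)a_{1\psi}a_{2\psi}\otimes c^{\psi\psi}$ before comparing, whereas the paper stops at $\zeta(b_{12})(a_1a_2)_\psi\otimes c^{\psi}$ and cites the entwining axiom for the final identification.
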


\begin{proof} Using the definition of $\alpha$ in \eqref{alpha}, we verify the commutativity of the diagram \eqref{eqsub} as follows:
\begin{equation}
(\alpha \otimes id)\circ \rho^2_R \left(c\otimes \begin{pmatrix}a_1 & b_{12} \\ 1 & a_2 \end{pmatrix}\right)=\varepsilon(c_1)\zeta(b_{12})a_{1\psi}a_{2\psi}\otimes c_2^{\psi\psi}=\zeta(b_{12})a_{1\psi}a_{2\psi}\otimes c^{\psi\psi}
\end{equation} On the other hand, we have
\begin{equation}
(\psi\circ (id\otimes \alpha))\circ \rho^2_L\left(c\otimes \begin{pmatrix}a_1 & b_{12} \\ 1 & a_2 \end{pmatrix}\right)=\varepsilon(c_2)\zeta(b_{12})(a_1a_2)_\psi\otimes c_1^\psi=\zeta(b_{12})(a_1a_2)_\psi\otimes c^\psi
\end{equation} The result is now clear from the properties of an entwining structure as described in Definition \ref{D2.1}.
\end{proof}

\begin{lem}\label{L4.2}  If $f\in \mathscr E^m_\psi(A,B,C,\zeta)$ and $g\in \mathscr E^n_\psi(A,B,C,\zeta)$, then $f\Diamond_ig\in \mathscr E^{m+n-1}_\psi(A,B,C,\zeta)$ for any $i\geq 0$.

\end{lem}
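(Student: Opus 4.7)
The plan is to verify the equivariance diagram \eqref{eqsub} for $h := f\Diamond_i g$ by pushing the outer $\psi$ inward through $f$ and then through the inner occurrence of $g$, using the equivariance of $f$ and $g$ separately together with the entwining axioms \eqref{ent}. The case $i \geq m$ is trivial since $h = 0$ and both paths of \eqref{eqsub} vanish, so I may assume $0 \leq i \leq m-1$.

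First, I would unwind both paths of the square \eqref{eqsub} applied to $h$. Using the definition of $\Diamond_i$ together with the iterated coproduct $c \mapsto c_1 \otimes c_2 \otimes c_3$ (via coassociativity of $\Delta$), the left path of \eqref{eqsub} produces an expression of the form
\begin{equation*}
f\bigl(c_2 \otimes \tilde M(c_3)\bigr)_\psi \otimes c_1^\psi,
\end{equation*}
where $\tilde M(c_3)$ is the $m$-dimensional tensor matrix appearing in the definition of $\Diamond_i$, whose central $A$-entry is $g(c_3^{\psi^i}\otimes N)$ with $N = M((i+1,i+1);(i+n,i+n))$; the right path produces
\begin{equation*}
h\bigl(c_1 \otimes M_\psi\bigr) \otimes c_2^{\psi^{m+n-1}},
\end{equation*}
where the iterated entwining has been distributed across $M$ via $\rho^{m+n-1}_R$.

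The main step is to equate these two expressions. I would first apply the equivariance of $f \in \mathscr E^m_\psi$, treating $\tilde M(c_3)$ as a fixed matrix: this moves the outer $\psi$ inward, distributing one copy across each $A$-entry of $\tilde M(c_3)$ and converting $c_1^\psi$ into a trailing $c_2^{\psi^m}$. In particular, the central entry $g(c_3^{\psi^i}\otimes N)$ acquires an additional entwining on its output. I would then apply the equivariance of $g \in \mathscr E^n_\psi$ to that central entry, pushing the extra $\psi$ through $g$ to obtain $g(c_3^{\psi^i}\otimes N_\psi)$, with the residual passage contributing to the trailing $C$-component. Using coassociativity of $\Delta$ together with the compatibility axiom $(1 \otimes \Delta)\circ \psi = (\psi \otimes 1)\circ (1 \otimes \psi)\circ (\Delta \otimes 1)$ from \eqref{ent}, iterated as in the proof of Lemma \ref{L3.2}, the two separate trailing $C$-factors recombine into the single factor $c_2^{\psi^{m+n-1}}$, while the interior entwinings assemble exactly into $M_\psi$ as prescribed by $\rho^{m+n-1}_R$.

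The main obstacle is the careful bookkeeping across the block structure of $M$: one must track how the iterated entwining acts on the upper-left $i \times i$ block (which already carries $\psi^i$ by the definition of $\Diamond_i$), the central $n \times n$ block $N$ that $g$ consumes, the lower-right $(m-i-1) \times (m-i-1)$ block, and the two off-diagonal row/column strips obtained via $\Pi^r$ and $\Pi^c$, and verify that the total contribution matches precisely the $\rho^{m+n-1}_R$ description. This is structurally the same diagrammatic bookkeeping already used in the proof of Lemma \ref{L3.2}, so the required machinery is in place.
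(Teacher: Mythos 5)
Your proposal is correct and follows essentially the same route as the paper: both reduce to checking the square \eqref{eqsub} for $f\Diamond_i g$ on a block-decomposed tensor matrix, applying the equivariance of $f$ on the $\rho_L$-side and of $g$ on the $\rho_R$-side, with the entwining axiom $(1\otimes\Delta)\circ\psi=(\psi\otimes1)\circ(1\otimes\psi)\circ(\Delta\otimes1)$ and coassociativity handling the trailing $C$-factors exactly as in the bookkeeping of Lemma \ref{L3.2}. The only cosmetic difference is that you push one side all the way into the other, while the paper transforms each side once and lets them meet at a common middle expression.
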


\begin{proof} We know that $f\Diamond_ig=0$ for $i>m-1$, so we suppose that $i\leq m-1$. We now consider an ``upper triangular tensor matrix'' $M$ in $A^{\otimes m+n-1}\otimes B^{\otimes (m+n-1)(m+n-2)/2}$. As usual, we will write $M$ as a $3\times 3$ matrix with the following blocks
\begin{equation}
M=\begin{pmatrix}
U(i) & X^{12} & X^{13} \\
1 & U(n) & X^{23} \\
1 & 1 & U(m-i-1) \\
\end{pmatrix}
\end{equation} where $U(k)$ is a square block of dimension $k$. For $c\in C$, we now have
\begin{equation}\label{eq4.8}
\begin{array}{ll}
((f\Diamond_i g )\otimes id)\circ \rho^{m+n-1}_R(c\otimes M)&= f\left(c_1\otimes \begin{pmatrix}
U(i)_{\psi\psi} & \Pi^rX^{12} & X^{13}\\
1 & g(c_2^{\psi^i}\otimes U(n)_\psi )& \Pi^cX^{23} \\
1&1& U(m-i-1)_\psi\\
\end{pmatrix}\right)\otimes c_3^{\psi^{m+n-1}}\\
& =  f\left(c_1\otimes \begin{pmatrix}
U(i)_{\psi\psi} & \Pi^rX^{12} & X^{13}\\
1 & g(c_3^{\psi^i}\otimes U(n) )_\psi & \Pi^cX^{23} \\
1&1& U(m-i-1)_\psi\\
\end{pmatrix}\right)\otimes c_2^{\psi^{m}}\\
\end{array}
\end{equation} where the second equality in \eqref{eq4.8} follows from the fact that $g\in \mathscr E^n_\psi(A,B,C,\zeta)$. We also have
\begin{equation}\label{eq4.9}
\begin{array}{ll}
(\psi\circ (id\otimes (f\Diamond_ig)))\circ \rho_L^{m+n-1}(c\otimes M)&= (\psi\circ (id \otimes (f\Diamond_ig))) (c_1\otimes c_2\otimes M)= ((f\Diamond_ig)(c_2\otimes M))_\psi\otimes c_1^\psi\\
&= f\left(c_2\otimes \begin{pmatrix}
U(i)_\psi & \Pi^rX^{12} &  X^{13} \\
1 & g(c_3^{\psi^i} \otimes U(n))& \Pi^cX^{23} \\
1 & 1 & U(m-i-1) \\
\end{pmatrix} \right)_\psi\otimes c_1^\psi \\
&= f\left(c_1\otimes \begin{pmatrix}
U(i)_{\psi\psi} & \Pi^rX^{12} &  X^{13} \\
1 & g(c_3^{\psi^i} \otimes U(n))_{\psi} & \Pi^cX^{23} \\
1 & 1 & U(m-i-1)_\psi \\
\end{pmatrix} \right)\otimes c_2^{\psi^m}\\
\end{array}
\end{equation} where the last equality in \eqref{eq4.9} follows from the fact that $f\in \mathscr E^m_\psi(A,B,C,\zeta)$. This proves the result.
\end{proof}

\begin{Thm}\label{T4.3}  Let $B$ be a commutative $k$-algebra and let $(A,B,C,\psi,\zeta)$ be an entwining structure over $B$. Then:

\smallskip
(1)  The tuple 
$(\mathscr E_\psi^\bullet(A,B,C,\zeta),\Diamond,\alpha)$ is the data of a weak comp subalgebra of $(\mathscr C_\psi^\bullet(A,B,C,\zeta),\Diamond,\alpha)$.

\smallskip
(2) $(\mathscr E_\psi^\bullet(A,B,C,\zeta),\delta^\bullet)$ is a subcomplex of $(\mathscr C_\psi^\bullet(A,B,C,\zeta),\delta^\bullet)$.

\smallskip
(3) The cup products $\cup$ and $\sqcup$ on the complex  $\mathscr E_\psi^\bullet(A,B,C,\zeta)$ coincide. In particular, these induce a graded commutative cup product on the cohomology
groups of $\mathscr E_\psi^\bullet(A,B,C,\zeta)$. 

\end{Thm}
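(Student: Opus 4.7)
The plan is to deduce all three parts from the lemmas already established in this section together with the comp-algebraic framework developed in Section 3.

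First I would dispatch parts (1) and (2) as formal consequences of Lemmas \ref{L4.1}--\ref{L4.2}, Theorem \ref{Th3} and Lemma \ref{L3.5}. Lemma \ref{L4.1} supplies $\alpha \in \mathscr E^2_\psi(A,B,C,\zeta)$ and Lemma \ref{L4.2} shows that $\mathscr E^\bullet_\psi(A,B,C,\zeta)$ is stable under every operation $\Diamond_i$, so the four axioms of Definition \ref{D3.1} hold in $\mathscr E^\bullet_\psi(A,B,C,\zeta)$ simply by restriction from $\mathscr C^\bullet_\psi(A,B,C,\zeta)$, giving (1). For (2) I would invoke Lemma \ref{L3.5} to write
\begin{equation*}
\delta^m(f) = (-1)^{m-1}\alpha \Diamond_0 f - \sum_{i=1}^m (-1)^{i-1} f \Diamond_{i-1} \alpha + \alpha \Diamond_1 f,
\end{equation*}
and observe that each summand lies in $\mathscr E^{m+1}_\psi(A,B,C,\zeta)$ since $\alpha \in \mathscr E^2_\psi(A,B,C,\zeta)$ and $\mathscr E^\bullet_\psi(A,B,C,\zeta)$ is $\Diamond_i$-closed.

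The heart of the argument is part (3), and the key step is to prove that $f \cup g = f \sqcup g$ already at the cochain level for $f \in \mathscr E^m_\psi(A,B,C,\zeta)$ and $g \in \mathscr E^n_\psi(A,B,C,\zeta)$. To this end I would fix $c \otimes M$ and decompose $M$ into the three blocks $U(m) = M((1,1);(m,m))$, $U(n) = M((m+1,m+1);(m+n,m+n))$ and $X = M((1,m+1);(m,m+n))$ exactly as in the proof of Proposition \ref{P3.6}. By \eqref{cup1} and \eqref{cup2},
\begin{equation*}
\begin{array}{l}
(f \cup g)(c \otimes M) = \zeta(\Pi X) \cdot f(c_1 \otimes U(m)_\psi) \cdot g(c_2^{\psi^m} \otimes U(n)), \\
(f \sqcup g)(c \otimes M) = \zeta(\Pi X) \cdot f(c_2 \otimes U(m))_\psi \cdot g(c_1^\psi \otimes U(n)).
\end{array}
\end{equation*}
The equivariance condition \eqref{eqsub} applied to $f$ at $c \otimes U(m)$ is precisely the identity
\begin{equation*}
f(c_1 \otimes U(m)_\psi) \otimes c_2^{\psi^m} = f(c_2 \otimes U(m))_\psi \otimes c_1^\psi \quad \in A \otimes C.
\end{equation*}
Applying the $k$-linear map $A \otimes C \to A$, $a \otimes d \mapsto a \cdot g(d \otimes U(n))$, to both sides and multiplying by the central element $\zeta(\Pi X)$ yields $f \cup g = f \sqcup g$ at once. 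Graded commutativity on cohomology then drops out of Corollary \ref{C3.8}(b): $\bar{f} \cup \bar{g} = (-1)^{mn} \bar{g} \sqcup \bar{f} = (-1)^{mn}\bar{g} \cup \bar{f}$.

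The main obstacle will be conceptual rather than computational. One must recognize that the only difference between the two cup products lies in the interchange of the Sweedler components of $c$ and the placement of the entwining $\psi$, and that the equivariance axiom \eqref{eqsub} cutting out $\mathscr E^\bullet_\psi(A,B,C,\zeta)$ is engineered to equate exactly these two configurations. The $B$-entries do not interfere because they sit in the image of $\zeta$ and pass trivially through $\psi$ by \eqref{eq2.2}, which is what allows the secondary argument to reduce to the primary one at this point.
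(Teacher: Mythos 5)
Your proposal is correct and follows essentially the same route as the paper: parts (1) and (2) are deduced from Lemmas \ref{L4.1}, \ref{L4.2} and \ref{L3.5} exactly as in the text, and for part (3) you make explicit the computation the paper leaves implicit, namely that the equivariance condition \eqref{eqsub} applied to $f$ at $c\otimes M((1,1);(m,m))$ gives $f(c_1\otimes U(m)_\psi)\otimes c_2^{\psi^m}=f(c_2\otimes U(m))_\psi\otimes c_1^\psi$, from which $f\cup g=f\sqcup g$ follows at the cochain level. The only minor point is that the relation $\bar f\cup\bar g=(-1)^{mn}\bar g\sqcup\bar f$ should be invoked for the weak comp algebra $\mathscr E^\bullet_\psi(A,B,C,\zeta)$ itself (the general property of weak comp algebras from \cite{Brz2}, as the paper does) rather than via Corollary \ref{C3.8}(b), which is stated for the cohomology of the full complex; with that adjustment your argument coincides with the paper's.
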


\begin{proof}
(1) From Lemma \ref{L4.1}, we know that $\alpha\in \mathscr E^2_\psi(A,B,C,\zeta)$. From Lemma \ref{L4.2}, we know that $f\Diamond_ig\in \mathscr E_\psi^\bullet(A,B,C,\zeta)$ whenever
$f$, $g\in \mathscr E_\psi^\bullet(A,B,C,\zeta)$. This proves the result.

\smallskip
(2) From Lemma \ref{L3.5}, we know that $(\mathscr C^\bullet_\psi(A,B,C,\zeta),\delta^\bullet)$ is the cochain complex induced by the weak comp algebra $(\mathscr C_\psi^\bullet(A,B,C,\zeta),\Diamond,\alpha)$. Since $(\mathscr E_\psi^\bullet(A,B,C,\zeta),\Diamond,\alpha)$ is   a weak comp subalgebra of $(\mathscr C_\psi^\bullet(A,B,C,\zeta),\Diamond,\alpha)$, the result follows.

\smallskip
(3) From \eqref{cup1} and \eqref{cup2}, we recall that for  $f\in \mathscr C^m_\psi(A,B,C,\zeta)$, 
$g\in \mathscr C^n_\psi(A,B,C,\zeta)$, we have
\begin{equation}\label{cup1z}
\begin{array}{l}
(f\cup g)(c\otimes M)\\ =\zeta\left(\Pi M((1,m+1);(m,m+n))\right)\cdot f(c_1\otimes M((1,1);(m,m))_\psi)\cdot g(c_2^{\psi^m}\otimes M((m+1,m+1);(n,n)))
\end{array}
\end{equation} and 
\begin{equation}\label{cup2z}
\begin{array}{l}
(f\sqcup g)(c\otimes M)\\ =\zeta\left(\Pi M((1,m+1);(m,m+n))\right)\cdot f(c_2\otimes M((1,1);(m,m)))_\psi \cdot g(c_1^\psi \otimes M((m+1,m+1);(n,n)))
\end{array}
\end{equation} Now if $f\in \mathscr E_\psi^m(A,B,C,\zeta)$, applying the condition in \eqref{eqsub} makes it clear that the expressions in \eqref{cup1z} and
\eqref{cup2z} are equal. From the general properties of a weak comp algebra, we know that these cup products descend to cohomology groups and $  \bar{f}\cup \bar{g}=(-1)^{mn}\bar{g}\sqcup \bar{f} $ for cohomology classes $\bar{f}\in H^m(\mathscr E^\bullet_\psi(A,B,C,\zeta))$ and $\bar{g}\in H^n(\mathscr E^\bullet_\psi(A,B,C,\zeta))$. The result is now clear.
\end{proof}

In Theorem \ref{Th3}, we showed that $(\mathscr C_\psi^\bullet(A,B,C,\zeta),\Diamond,\alpha)$ carries the structure of a weak comp algebra. We will now show that the tuple $(\mathscr E_\psi^\bullet(A,B,C,\zeta),\Diamond,\alpha)$ satisfies a stronger condition, i.e., it is a comp algebra in the sense of Gerstenhaber and Schack \cite[$\S$ 4]{GS2}. Further, we will show  that the graded commutative structure on  the cohomology
groups of $\mathscr E_\psi^\bullet(A,B,C,\zeta)$ is part of a Gerstenhaber algebra structure on $H^\bullet(\mathscr E_\psi^\bullet(A,B,C,\zeta))$.   First, we recall the following two definitions.

\begin{defn}\label{Def4.4} (see \cite[$\S$ 4]{GS2}  Let $k$ be a field. A (right)  comp algebra $(V^\bullet,\Diamond,\alpha)$ over $k$ consists of the following data:

\smallskip
(A) A graded vector space $V=\underset{i\geq 0}{\bigoplus}V^i$ and a given element $\alpha\in V^2$

\smallskip
(B) A family $\Diamond$ of operations 
\begin{equation}
\Diamond_i:V^m\otimes V^n\longrightarrow V^{m+n-1} \qquad \forall\textrm{ }i\geq 0
\end{equation} satisfying the following conditions for any $f\in V^m$, $g\in V^n$, $h\in V^p$:

\smallskip
\begin{enumerate}
\item $f\Diamond_ig =0$ if $i>m-1$.

\item  $(f\Diamond_i g)\Diamond_j h=(f\Diamond_j h)\Diamond_{i+p-1}g$ if $j<i$

\item $(f\Diamond_i g)\Diamond_j h=f\Diamond_i(g\Diamond_{j-i}h)$ if $i\leq j<n+i$

\item $(f\Diamond_ig)\Diamond_jh=(f\Diamond_{j-n+1}h)\Diamond_ig$ if $j\geq n+i$

\item $\alpha\Diamond_0\alpha = \alpha\Diamond_1\alpha$.
\end{enumerate}

\end{defn}  

\begin{defn}\label{Def4.5} (see \cite{G1}) Let $k$ be a field. A Gerstenhaber algebra $(V,\cup,[.,.])$ over $k$ consists of the following data:

\smallskip
(A) A graded vector space $V=\underset{i\geq 0}{\bigoplus}V^i$.

\smallskip
(B) A cup product  $\cup: V^m\otimes V^n\longrightarrow V^{m+n}$, $\forall$ $m$, $n\geq 0$ so that $(V,\cup)$ carries the structure of a graded
commutative algebra. 

\smallskip
(C) A degree $-1$ Lie bracket $[.,.]:V^m\otimes V^n\longrightarrow V^{m+n-1}$, $\forall$ $m$, $n\geq 0$ so that $(V,[.,.])$ carries the structure of a graded Lie algebra.

\smallskip
(D) The bracket is a graded derivation for  the cup product, i.e., 
\begin{equation*}
[f,g\cup h]=[f,g]\cup h+(-1)^{m(n+1)}g\cup [f,h]
\end{equation*} for $f\in V^m$, $g\in V^n$ and $h\in V^p$. 

\end{defn}

In particular, given a comp algebra $(V^\bullet,\Diamond,\alpha)$, one may consider the following operations defined on it
\begin{equation}\label{eq4.13h}
\begin{array}{c}
f\cup g:=(\alpha\Diamond_0f)\Diamond_mg  \qquad f\Diamond g:=\underset{i\geq 0}{\sum} (-1)^{i(n-1)}f\Diamond_ig\qquad 
\mbox{$[f,g]:=f\Diamond g-(-1)^{(m-1)(n-1)}g\Diamond f$} \\
\end{array}
\end{equation} for $f\in V^m$ and $g\in V^n$. Then, it follows (see \cite[$\S$ 5]{GS2}) that the operations $\cup$ and $[.,.]$ descend to  cohomology and $(H^\bullet(V),\cup, [.,.])$ carries the structure of a Gerstenhaber algebra. 

\begin{lem}\label{L4.6} For $f\in \mathscr E_\psi^m(A,B,C,\zeta)$, $g\in \mathscr E^n_\psi(A,B,C,\zeta)$ and $h\in \mathscr E_\psi^p(A,B,C,\zeta)$,
we have
\begin{equation*}
(f\Diamond_ig)\Diamond_jh=(f\Diamond_jh)\Diamond_{i+p-1}g 
\end{equation*} for $j<i$. 

\end{lem}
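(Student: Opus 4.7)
The plan is to adapt the computation in the proof of Lemma \ref{L3.3}, replacing the specific properties of $\alpha$ (particularly the counit identity $a_\psi\varepsilon(c^\psi)=\varepsilon(c)a$) with the equivariance property \eqref{eqsub} enjoyed by every element of $\mathscr{E}_\psi^\bullet$. Indeed, Lemma \ref{L3.3} required $g=\alpha$ or $h=\alpha$ precisely because, as Lemma \ref{L4.1} records, the element $\alpha$ itself belongs to the equivariant subcomplex; the present statement promotes this to arbitrary equivariant cochains.

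Concretely, I would fix an upper-triangular tensor matrix $M$ of dimension $m+n+p-2$ and decompose it as a $5\times 5$ block matrix with diagonal blocks $U(j)$, $U(p)$, $U(i-j-1)$, $U(n)$, $U(m-i-1)$ (summing to $m+n+p-2$). Unwinding the definitions of the $\Diamond$-operations, both sides of \eqref{3.5eqx} evaluate on $c\otimes M$ to expressions of the form $f(c^{(1)}\otimes N)$, where $N$ is an $m$-dimensional block tensor matrix containing an evaluation of $h$ at the slot corresponding to the original $U(p)$-block and an evaluation of $g$ at the slot corresponding to the original $U(n)$-block; the remaining diagonal blocks carry various $\psi$-twists. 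The positions of the $h$- and $g$-slots inside $N$ coincide on both sides (this is the standard pre-Lie insertion identity for $j<i$), and the $\zeta$- and $B$-decorations agree by the commutativity of $B$ together with \eqref{eq2.2}.

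The two sides genuinely differ in two respects: (a) which component of the iterated coproduct $\Delta^{(2)}(c)=c^{(1)}\otimes c^{(2)}\otimes c^{(3)}$ is fed to $h$ versus $g$ (the roles are swapped), and (b) whether the extra $\psi$-twist produced by the ``outer'' $\Diamond$-operation sits on $h$'s output or on the $U(p)$-entries inside $h$'s argument (and symmetrically for $g$ and the $U(n)$-block). To reconcile these, I would invoke the equivariance condition \eqref{eqsub} for $h$, which can be rewritten as $h(c_1\otimes T_\psi)\otimes c_2^{\psi^p}=h(c_2\otimes T)_\psi\otimes c_1^\psi$ for any $p$-dimensional tensor matrix argument $T$, in order to pull a $\psi$-twist from $h$'s output onto the $U(p)$-entries of its input while simultaneously relabeling the coproduct components. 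The coassociativity of $\Delta$ and the iterated entwining identities \eqref{ent}, recorded in the commutative diagram \eqref{lemeq}, then deliver the remaining matching of $\psi$-counts on $c^{(2)}$ and $c^{(3)}$ in the $g$-slot.

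The main obstacle is the careful bookkeeping of $\psi$-applications and of coproduct labels across the five blocks; conceptually, however, every required algebraic step is a single instance of equivariance of $h$ (or of $g$), of one of the entwining compatibilities in \eqref{ent}, or of coassociativity of $\Delta$, and the overall structure mirrors that of Lemma \ref{L3.3}.
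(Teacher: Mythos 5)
Your proposal is correct and follows essentially the same route as the paper: the paper's proof of Lemma \ref{L4.6} uses exactly the $5\times 5$ block decomposition with diagonal blocks $U(j)$, $U(p)$, $U(i-j-1)$, $U(n)$, $U(m-i-1)$, unwinds both sides of the identity directly, and reconciles them by a single application of the equivariance condition \eqref{eqsub} to $h$ (in precisely the form $h(c_1\otimes T_\psi)\otimes c_2^{\psi^p}=h(c_2\otimes T)_\psi\otimes c_1^\psi$ you state), together with coassociativity of $\Delta$. The only cosmetic difference is that the matching of the $B$-entries needs nothing beyond commutativity of $B$ (the condition \eqref{eq2.2} you mention plays no role here), which does not affect the argument.
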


\begin{proof}
We write an
$m+n+p-2$  square  ``upper triangular tensor matrix'' $M$ in $A^{\otimes (m+n+p-2)}\otimes 
B^{\otimes \frac{(m+n+p-2)(m+n+p-3)}{2}}$  as a $5\times 5$ matrix  
\begin{equation}\label{3.6eqzr}
M=\begin{pmatrix}
U(j) & X^{12} & X^{13} & X^{14} & X^{15} \\
1 & U(p) & X^{23} & X^{24} & X^{25} \\
1 & 1 & U(i-j-1) & X^{34} & X^{35} \\
1& 1 & 1 & U(n) & X^{45} \\
1 & 1 & 1 & 1 & U(m-i-1) \\
\end{pmatrix}
\end{equation} where each $U(k)$ in \eqref{3.6eqzr} is a square block of dimension $k$.  For $c\in C$, we now have
\begin{equation*}
\begin{array}{ll}
((f\Diamond_ig)\Diamond_jh)(c\otimes M)&=(f\Diamond_ig)\left(c_1\otimes \begin{pmatrix}
U(j)_\psi &\Pi^r X^{12} & X^{13} & X^{14} & X^{15} \\
1 & h(c_2^{\psi^j}\otimes U(p)) &\Pi^c X^{23} & \Pi^c X^{24} & \Pi^c X^{25} \\
1 & 1 & U(i-j-1) & X^{34} & X^{35} \\
1& 1 & 1 & U(n) & X^{45} \\
1 & 1 & 1 & 1 & U(m-i-1) \\
\end{pmatrix}\right) \\
& = f\left(c_1\otimes \begin{pmatrix}
U(j)_{\psi\psi} &\Pi^r X^{12} & X^{13} & \Pi^rX^{14} & X^{15} \\
1 & h(c_2^{\psi^j}\otimes U(p)_\psi) &\Pi^c X^{23} &\Pi^r \Pi^c X^{24} & \Pi^c X^{25} \\
1 & 1 & U(i-j-1)_\psi &  \Pi^rX^{34} & X^{35} \\
1& 1 & 1 & g(c_3^{\psi^{i+p-1}}\otimes U(n)) &\Pi^c X^{45} \\
1 & 1 & 1 & 1 & U(m-i-1) \\
\end{pmatrix}\right) \\
\end{array}
\end{equation*} where the last equality follows from applying the condition in \eqref{eqsub} to $h\in \mathscr E^p_\psi(A,B,C,\zeta)$. On the other hand, we have
\begin{equation*}
\begin{array}{ll}
((f\Diamond_jh)\Diamond_{i+p-1}g)(c\otimes M)& =(f\Diamond_jh)\left(c_1\otimes \begin{pmatrix}
U(j)_\psi & X^{12} & X^{13} & \Pi^rX^{14} & X^{15} \\
1 & U(p)_\psi & X^{23} & \Pi^rX^{24} & X^{25} \\
1 & 1 & U(i-j-1)_\psi & \Pi^rX^{34} & X^{35} \\
1& 1 & 1 & g(c_2^{\psi^{i+p-1}}\otimes U(n)) & \Pi^c X^{45} \\
1 & 1 & 1 & 1 & U(m-i-1) \\
\end{pmatrix}\right)\\
\end{array}
\end{equation*} The result is now clear.
\end{proof}

\begin{lem} \label{L4.7} For $f\in \mathscr E_\psi^m(A,B,C,\zeta)$, $g\in \mathscr E^n_\psi(A,B,C,\zeta)$ and $h\in \mathscr E_\psi^p(A,B,C,\zeta)$,
we have
\begin{equation*}
(f\Diamond_ig)\Diamond_jh=  (f\Diamond_{j-n+1}h)\Diamond_ig
\end{equation*} for $j\geq i+n$. 

\end{lem}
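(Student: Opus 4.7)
The proof runs parallel to that of Lemma~\ref{L4.6}, but with the roles of the two inner insertions swapped: here $g$ occupies positions $i+1,\ldots,i+n$ while $h$ occupies the strictly later positions $j+1,\ldots,j+p$, since $j\geq i+n$.

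First, the plan is to write an arbitrary upper-triangular tensor matrix $M\in A^{\otimes(m+n+p-2)}\otimes B^{\otimes(m+n+p-2)(m+n+p-3)/2}$ as the $5\times 5$ block array
$$
M=\begin{pmatrix}
U(i) & X^{12} & X^{13} & X^{14} & X^{15} \\
1 & U(n) & X^{23} & X^{24} & X^{25} \\
1 & 1 & U(j-i-n) & X^{34} & X^{35} \\
1 & 1 & 1 & U(p) & X^{45} \\
1 & 1 & 1 & 1 & U(m+n-j-2) \\
\end{pmatrix}
$$
with the second diagonal block $U(n)$ reserved for $g$ and the fourth $U(p)$ for $h$; all five sizes are nonnegative since $i\leq m-1$, $j\geq i+n$, and $j\leq m+n-2$ (otherwise both sides vanish).

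Next, I expand $((f\Diamond_i g)\Diamond_j h)(c\otimes M)$ by applying the outer $\Diamond_j h$ first: this collapses $U(p)$ to $h(c_2^{\psi^j}\otimes U(p))$, places a single $\psi$-twist on each of $U(i)$, $U(n)$ and $U(j-i-n)$, and carries out the appropriate row- and column-products on the $B$-blocks $X^{\ast 4}$ and $X^{45}$. The subsequent application of $f\Diamond_i g$ then collapses $U(n)_\psi$ to $g(c_{12}^{\psi^i}\otimes U(n)_\psi)$ and applies a second $\psi$-twist to $U(i)_\psi$, leaving $f$ evaluated at $c_{11}\otimes N$ for a matrix $N$ whose diagonal reads $U(i)_{\psi\psi}$, $g(c_{12}^{\psi^i}\otimes U(n)_\psi)$, $U(j-i-n)_\psi$, $h(c_2^{\psi^j}\otimes U(p))$, $U(m+n-j-2)$. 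Expanding $((f\Diamond_{j-n+1}h)\Diamond_i g)(c\otimes M)$ in the same manner, the outer $\Diamond_i g$ now collapses $U(n)$ to $g(c_2^{\psi^i}\otimes U(n))$ (with no $\psi$ on $U(n)$) while the subsequent $f\Diamond_{j-n+1}h$ collapses $U(p)$ to $h(c_{12}^{\psi^{j-n+1}}\otimes U(p))$ and $\psi$-twists the $g$-entry, producing $f$ evaluated at $c_{11}\otimes\widetilde N$, whose diagonal is $U(i)_{\psi\psi}$, $g(c_2^{\psi^i}\otimes U(n))_\psi$, $U(j-i-n)_\psi$, $h(c_{12}^{\psi^{j-n+1}}\otimes U(p))$, $U(m+n-j-2)$, with $B$-blocks matching those of $N$.

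Finally, one must match $N$ with $\widetilde N$. The $B$-entries and the blocks $U(i)_{\psi\psi}$, $U(j-i-n)_\psi$, $U(m+n-j-2)$ coincide directly (using commutativity of $B$). The remaining identification reduces to
\begin{equation*}
g(c_{12}^{\psi^i}\otimes U(n)_\psi)\otimes h(c_2^{\psi^j}\otimes U(p))\;=\;g(c_2^{\psi^i}\otimes U(n))_\psi\otimes h(c_{12}^{\psi^{j-n+1}}\otimes U(p)),
\end{equation*}
which is the equivariance condition \eqref{eqsub} applied to $g\in\mathscr E_\psi^n(A,B,C,\zeta)$: it trades a $\psi$ between the $U(n)$-factor and the $A$-valued output of $g$ while simultaneously swapping the two coproduct components available to $g$ and $h$. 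The compensating shift from $\psi^{j-n+1}$ to $\psi^j$ on the $h$-argument is then supplied by applying the identity $a_\psi\otimes\Delta(c^\psi)=a_{\psi\psi}\otimes c_1^\psi\otimes c_2^\psi$ from \eqref{ent} a total of $n-1$ times to propagate $\psi$-twists through the intervening $U(j-i-n)$-block, in direct analogy with the last step of Lemma~\ref{L4.6}. The main obstacle is precisely this bookkeeping of $\psi$-twists on the iterated coproducts of $c$; once carried out, the conclusion follows.
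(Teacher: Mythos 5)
Your proposal is correct and takes essentially the same route as the paper: the identical $5\times 5$ block decomposition with diagonal blocks $U(i)$, $U(n)$, $U(j-i-n)$, $U(p)$, $U(m+n-j-2)$, the same two expansions of $((f\Diamond_ig)\Diamond_jh)(c\otimes M)$ and $((f\Diamond_{j-n+1}h)\Diamond_ig)(c\otimes M)$, and the same key step, namely the equivariance condition \eqref{eqsub} applied to $g$, which swaps the coproduct components feeding $g$ and $h$ while trading the $\psi$ on $U(n)$ for a $\psi$ on the output of $g$. One small correction to your bookkeeping: the drop from $\psi^{j}$ to $\psi^{j-n+1}$ on the $h$-argument is already fully produced by that single application of \eqref{eqsub} (the $n$ twists through $U(n)$ become one twist on $g(\cdot)$, while the $U(j-i-n)$-block contributes $j-i-n$ twists on both sides), so the iterated identity $a_\psi\otimes\Delta(c^\psi)=a_{\psi\psi}\otimes c_1^\psi\otimes c_2^\psi$ is not what supplies this shift; it is only needed, as in Lemma \ref{L3.2}, to commute the coproduct past the $\psi^i$-twists coming from the $U(i)$-block so that \eqref{eqsub} can be applied.
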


\begin{proof} We know that $j\geq i+n$. If $j-n+1>m-1$, both sides are already zero. We assume therefore that
$j-n+1\leq m-1$ and hence $m+n-2-j\geq 0$. We write an
$m+n+p-2$  square  ``upper triangular tensor matrix'' $M$ in $A^{\otimes (m+n+p-2)}\otimes 
B^{\otimes \frac{(m+n+p-2)(m+n+p-3)}{2}}$  as a $5\times 5$ matrix  
\begin{equation}\label{3.6eqzrt}
M=\begin{pmatrix}
U(i) & X^{12} & X^{13} & X^{14} & X^{15} \\
1 & U(n) & X^{23} & X^{24} & X^{25} \\
1 & 1 & U(j-i-n) & X^{34} & X^{35} \\
1& 1 & 1 & U(p) & X^{45} \\
1 & 1 & 1 & 1 & U(m+n-j-2) \\
\end{pmatrix}
\end{equation}
 where each $U(k)$ in \eqref{3.6eqzrt} is a square block of dimension $k$. 
 
  For $c\in C$, we now have
 \begin{equation*}
\begin{array}{ll}
((f\Diamond_ig)\Diamond_jh)(c\otimes M)&=(f\Diamond_ig)\left(c_1\otimes \begin{pmatrix}
U(i)_\psi & X^{12} & X^{13} & \Pi^rX^{14} & X^{15} \\
1 & U(n)_\psi & X^{23} & \Pi^rX^{24} & X^{25} \\
1 & 1 & U(j-i-n)_\psi & \Pi^r X^{34} & X^{35} \\
1& 1 & 1 & h(c_2^{\psi^j}\otimes U(p)) & \Pi^cX^{45} \\
1 & 1 & 1 & 1 & U(m+n-j-2) \\
\end{pmatrix}\right) \\\\
& = f\left(c_1\otimes \begin{pmatrix}
U(i)_{\psi\psi} &\Pi^r X^{12} & X^{13} & \Pi^rX^{14} & X^{15} \\
1 & g(c_3^{\psi^i}\otimes U(n))_\psi &\Pi^c X^{23} &\Pi^r \Pi^c X^{24} & \Pi^c X^{25} \\
1 & 1 & U(j-i-n)_\psi &  \Pi^rX^{34} & X^{35} \\
1& 1 & 1 & h(c_2^{\psi^{j-n+1}}\otimes U(p)) &\Pi^c X^{45} \\
1 & 1 & 1 & 1 & U(m+n-j-2) \\
\end{pmatrix}\right) 
\end{array}
\end{equation*} where the last equality follows from applying the condition in \eqref{eqsub} to $g\in \mathscr E^n_\psi(A,B,C,\zeta)$.   On the other hand, we have
\begin{equation*}
\begin{array}{l}
((f\Diamond_{j-n+1}h)\Diamond_{i}g)(c\otimes M)\\ =(f\Diamond_{j-n+1}h)\left(c_1\otimes \begin{pmatrix}
U(i)_\psi & \Pi^rX^{12} & X^{13} & X^{14} & X^{15} \\
1 & g(c_2^{\psi^i}\otimes U(n)) & \Pi^cX^{23} & \Pi^cX^{24} & \Pi^cX^{25} \\
1 & 1 & U(j-i-n) & X^{34} & X^{35} \\
1& 1 & 1 & U(p) & X^{45} \\
1 & 1 & 1 & 1 & U(m+n-j-2) \\
\end{pmatrix}\right)\\
\end{array}
\end{equation*} The result is now clear.
\end{proof}

\begin{Thm}\label{thfn}   Let $B$ be a commutative $k$-algebra and let $(A,B,C,\psi,\zeta)$ be an entwining structure over $B$. Then:

\smallskip
(1)  The tuple 
$(\mathscr E_\psi^\bullet(A,B,C,\zeta),\Diamond,\alpha)$ is the data of a right comp algebra over $k$.
 
\smallskip
(2) The cohomology $(H^\bullet(\mathscr E_\psi^\bullet(A,B,C,\zeta)),\cup,[.,.])$ carries the structure of a Gerstenhaber algebra. 

\end{Thm}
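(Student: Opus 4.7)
My plan for part (1) is to verify the five axioms of Definition \ref{Def4.4} one by one, noting that most of them are already available either from the weak comp algebra structure on the ambient complex established in Theorem \ref{Th3}, or from the more refined lemmas just proved. Condition (1) is built into the definition of $\Diamond_i$. Condition (3), the associativity-type identity for $i \le j < n+i$, holds on all of $\mathscr C_\psi^\bullet(A,B,C,\zeta)$ by Lemma \ref{L3.2}, and therefore restricts to the subcomplex. Condition (5), $\alpha\Diamond_0\alpha=\alpha\Diamond_1\alpha$, was verified in the proof of Theorem \ref{Th3}; together with Lemma \ref{L4.1}, it takes place inside $\mathscr E_\psi^2(A,B,C,\zeta)$. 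The two remaining conditions are exactly the unrestricted statements of Lemma \ref{L4.6} (for $j < i$) and Lemma \ref{L4.7} (for $j \ge n+i$). Finally, Lemma \ref{L4.2} guarantees that $\Diamond_i$ takes $\mathscr E_\psi^m \otimes \mathscr E_\psi^n$ into $\mathscr E_\psi^{m+n-1}$, so all operations stay inside the subcomplex. Assembling these facts gives (1).

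For part (2), my plan is to invoke the standard machinery of Gerstenhaber--Schack. Once $(\mathscr E_\psi^\bullet(A,B,C,\zeta),\Diamond,\alpha)$ is known to be a (right) comp algebra, one defines the cup product $\cup$, the pre-Lie type product $\Diamond$, and the bracket $[\cdot,\cdot]$ on $\mathscr E_\psi^\bullet(A,B,C,\zeta)$ by the formulas in \eqref{eq4.13h}. The general result \cite[\S 5]{GS2} then asserts that both $\cup$ and $[\cdot,\cdot]$ descend to the cohomology $H^\bullet(\mathscr E_\psi^\bullet(A,B,C,\zeta))$, that the cup product there is graded commutative, that the bracket makes it into a graded Lie algebra of degree $-1$, and that the bracket acts as a graded derivation with respect to $\cup$ in the sense of Definition \ref{Def4.5}. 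Note that the cup product obtained here coincides with the one from Theorem \ref{T4.3}(3): on $\mathscr E_\psi^\bullet(A,B,C,\zeta)$ we already know $\cup = \sqcup$, and graded commutativity of the induced product on cohomology is consistent with Corollary \ref{C3.8}(b).

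The technical heavy lifting has in fact already been done in Lemmas \ref{L4.6} and \ref{L4.7}, where the ``bridge'' identities of a comp algebra are promoted from the weak form (which required $g=\alpha$ or $h=\alpha$) to their general form by exploiting the equivariance condition \eqref{eqsub}. Once those are in hand, Theorem \ref{thfn} reduces to collating axioms and quoting general facts; I do not anticipate an additional obstacle. The only subtle point worth recording in the write-up is that part (2) is a genuine strengthening of Theorem \ref{T4.3}(3): graded commutativity comes from the weak comp structure, but the Gerstenhaber bracket and its compatibility with $\cup$ truly require the full comp algebra axioms, which is why passing from $\mathscr C_\psi^\bullet$ to the equivariant subcomplex $\mathscr E_\psi^\bullet$ is essential.
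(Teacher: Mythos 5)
Your proposal is correct and follows essentially the same route as the paper: part (1) is assembled from Lemma \ref{L3.2}, Lemmas \ref{L4.6} and \ref{L4.7}, and the closure statements (Lemmas \ref{L4.1} and \ref{L4.2}, with $\alpha\Diamond_0\alpha=\alpha\Diamond_1\alpha$ from Theorem \ref{Th3}), while part (2) defines the operations by \eqref{eq4.13h} and invokes \cite[$\S$ 5]{GS2}. Your write-up is in fact slightly more explicit than the paper's in matching each comp-algebra axiom to its source, but the argument is the same.
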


\begin{proof}
The result of (1) is clear from the conditions in Definition \ref{Def4.4} and the results of Lemmas \ref{L3.2}, \ref{L4.6} and \ref{L4.7}. The comp algebra
$(\mathscr E_\psi^\bullet(A,B,C,\zeta),\Diamond,\alpha)$ now carries the operations 
\begin{equation*}
\begin{array}{c}
f\cup g:=(\alpha\Diamond_0f)\Diamond_mg  \qquad f\Diamond g:=\underset{i\geq 0}{\sum} (-1)^{i(n-1)}f\Diamond_ig\qquad 
\mbox{$[f,g]:=f\Diamond g-(-1)^{(m-1)(n-1)}g\Diamond f$} \\
\end{array}
\end{equation*} as explained in \eqref{eq4.13h}. From \cite[$\S$ 5]{GS2}, it now follows that the operations
$\cup$ and $[.,.]$ descend to   cohomology and $(H^\bullet(\mathscr E_\psi^\bullet(A,B,C,\zeta)),\cup,[.,.])$ carries the structure of a Gerstenhaber algebra. 
\end{proof}

\section{Deformation theory}
We consider the map $\zeta: B\longrightarrow A$ such that $\zeta(B)\subseteq Z(A)$. Following \cite{St1},
we see that for each $b\in B$,  the map $\mu_b:A \otimes A \longrightarrow A$ given by $\mu_b(x\otimes y)=\zeta(b)xy$ defines a product structure on $A$. We will now consider deformations of an entwining structure $(A,B,C,\psi,\zeta)$ over $B$.

\begin{defn} \label{Dinfdef} 
Let $(A,B,C,\psi,\zeta)$ be an entwining structure over $B$.  A formal  deformation $(\mu_t,\Delta_t,\psi_t)$ of  $(A,B,C,\psi,\zeta)$ over $B$ consists of the following data:

\smallskip
(1) A family $\mu_t=\{\mu^{(i)}:A\otimes A\otimes B\longrightarrow A\}_{i\geq 1}$ of $k$-linear maps such that the collection
\begin{equation*}
\{\mu_{b,t}: A[[t]]\otimes_{k[[t]]} A[[t]]\longrightarrow A[[t]]\}_{b\in B}
\end{equation*}  of $k[[t]]$-linear maps determined by 
\begin{equation*}
\mu_{b,t}(x\otimes y) =\mu^{(0)}(x \otimes y \otimes b)+\sum_{i=1}^\infty \mu^{(i)}(x\otimes y \otimes b) t^i\qquad x,y\in A
\end{equation*} 
with $\mu^{(0)}(x \otimes y \otimes b)=\zeta(b)xy=\mu_b(x \otimes y)$, satisfies the generalized associativity condition
\begin{equation*}
\mu_{b_1b_2,t}(id \otimes \mu_{b_3,t})=\mu_{b_2b_3,t}(\mu_{b_1,t} \otimes id):A[[t]]\otimes_{k[[t]]} A[[t]]\otimes_{k[[t]]} A[[t]]\longrightarrow A[[t]]
\end{equation*}
for any $b_1,b_2,b_3 \in B$. In particular, for each $b \in B$, we have a $k$-linear map $\mu^{(i)}_b:A \otimes A \longrightarrow A$ given by $\mu^{(i)}_b(x \otimes y)=\mu^{(i)}(x \otimes y \otimes b)$.

\smallskip
(2) A $k[[t]]$-linear map $\Delta_t: C[[t]]\longrightarrow C[[t]]\otimes_{k[[t]]} C[[t]]$ of the form
\begin{equation*}
\Delta_t (c)= \Delta(c)+ \sum_{i=1}^\infty \Delta^{(i)}(c)t^i\qquad c\in C
\end{equation*} making $C[[t]]$ into a coassociative coalgebra over $k[[t]]$. For $c\in C$, we write $\Delta^{(i)}(c)=\sum c_{(i1)}\otimes c_{(i2)}\in C\otimes C$. 

\smallskip
(3) A family $\psi_t=\{\psi^{(i)}:C\otimes A\longrightarrow A\otimes C\}_{i\geq 1}$ of $k$-linear maps such that 
the $k[[t]]$-linear map determined by
\begin{equation*}
\psi_t: C[[t]]\otimes_{k[[t]]} A[[t]]\longrightarrow A[[t]]\otimes_{k[[t]]} C[[t]] \qquad \psi_t(c\otimes x)=\psi(c\otimes x)+\sum_{i=1}^\infty \psi^{(i)}(c\otimes x)t^i
\end{equation*} for $c\in C$ and $x\in A$ satisfies 
\begin{eqnarray}
\psi_t(id \otimes \mu_{b,t})=(\mu_{b,t} \otimes id)(id \otimes \psi_t)(\psi_t \otimes  id):C[[t]]\otimes_{k[[t]]} A[[t]]\otimes_{k[[t]]} A[[t]]\longrightarrow A[[t]]\otimes_{k[[t]]} C[[t]] \label{eq 6.1s}\\
(id \otimes \Delta_t)\psi_t=(\psi_t \otimes  id)(id \otimes \psi_t)(\Delta_t \otimes id):C[[t]]\otimes_{k[[t]]} A[[t]]\longrightarrow A[[t]]\otimes_{k[[t]]} C[[t]]\otimes_{k[[t]]} C[[t]] \label{eq 6.2x}
\end{eqnarray} For $c\otimes a\in C\otimes A$, we write $\psi^{(i)}(c\otimes a)=\sum a_{\psi^{(i)}}\otimes c^{\psi^{(i)}}$.

\end{defn}

\begin{defn}\label{equivdeform} Let $(A,B,C,\psi,\zeta)$ be an entwining structure over $B$. We will say that two deformations  $(\mu_t,\Delta_t,\psi_t)$ and  $(\mu'_t,\Delta'_t,\psi'_t)$ of  $(A,B,C,\psi,\zeta)$ over $B$ are equivalent if there exists a pair $(\alpha,\gamma)$ of $k[[t]]$-linear isomorphisms $\alpha: A[[t]]\longrightarrow A[[t]]$ and $\gamma: C[[t]]
\longrightarrow C[[t]]$ such that 

\smallskip
(1) The map $\alpha: A[[t]]\longrightarrow A[[t]]$ is determined by 
\begin{equation*}
\alpha(x)=x+\sum_{i=1}^\infty \alpha^{(i)}(x)t^i\qquad x\in A
\end{equation*} for $k$-linear maps $\{\alpha^{(i)}:A\longrightarrow A\}_{i\geq 1}$  and satisfies
\begin{equation*}
\alpha (\mu_{b,t}(x\otimes y))=\mu'_{b,t}(\alpha(x)\otimes \alpha(y)) \qquad x,y\in A
\end{equation*} for each $b\in B$. 

\smallskip
(2) The map $\gamma: C[[t]]\longrightarrow C[[t]]$  is determined by 
\begin{equation*}
\gamma(c)=c+\sum_{i=1}^\infty \gamma^{(i)}(c)t^i\qquad c\in C
\end{equation*} for $k$-linear maps $\{\gamma^{(i)}:C\longrightarrow C\}_{i\geq 1}$  and  is a morphism of coalgebras, i.e., $(\gamma\otimes \gamma)\circ \Delta_t=\Delta'_t\circ \gamma$. 

\smallskip
(3) The maps $\alpha$ and $\gamma$ are compatible with respect to $\psi_t$ and $\psi'_t$, i.e., 
\begin{equation*}
\psi'_t\circ (\gamma\otimes \alpha)=(\alpha\otimes \gamma)\circ \psi_t:C[[t]]\otimes_{k[[t]]} A[[t]]\longrightarrow A[[t]]\otimes_{k[[t]]} C[[t]]
\end{equation*}

\end{defn}
 
 \smallskip
In order to understand the deformations of $(A,B,C,\psi,\zeta)$ over $B$, we now introduce a double complex extending that of Brzezi\'{n}ski \cite[$\S$ 6]{Brz2}. 

\begin{lem}[see \cite{Brz2}]\label{vert} Let $(A,C,\psi)$ be an entwining structure and let $(V, \rho_L^V,\rho_R^V)$ be a $C$-bicomodule. Then, we have a complex $A^\bullet_\psi(C,V)$ given by 
\begin{equation*}
A^n_\psi(C,V)=Hom(V,A \otimes C^n)
\end{equation*}
with the differential $\bar{\delta}^n:Hom(V,A \otimes C^n) \longrightarrow Hom(V,A \otimes C^{n+1})$ defined by
\begin{equation*}
\bar{\delta}^nf=(\psi \otimes C^n) \circ (C \otimes f) \circ \rho_L^V+ \sum\limits_{i=1}^n (-1)^i  (A \otimes C^{i-1} \otimes \Delta \otimes C^{n-i}) \circ f~ + (-1)^{n+1}~(f \otimes C) \circ \rho_R^V
\end{equation*}
\end{lem}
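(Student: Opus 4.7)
The plan is to recognize $A^\bullet_\psi(C,V)$ as the alternating-sum complex of a cosimplicial $k$-module and then check the standard cosimplicial identities; once these hold, $\bar{\delta}^{n+1}\circ\bar{\delta}^n=0$ follows by the usual formal cancellation. For each $n\geq 0$ and each $0\leq i\leq n+1$ define a coface
\begin{equation*}
d_i^n : A^n_\psi(C,V) \longrightarrow A^{n+1}_\psi(C,V)
\end{equation*}
by $d_0^n f=(\psi\otimes C^n)\circ(C\otimes f)\circ\rho_L^V$, $d_i^n f=(A\otimes C^{i-1}\otimes\Delta\otimes C^{n-i})\circ f$ for $1\leq i\leq n$, and $d_{n+1}^n f=(f\otimes C)\circ\rho_R^V$. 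By construction $\bar{\delta}^n=\sum_{i=0}^{n+1}(-1)^i d_i^n$, so it suffices to prove $d_j^{n+1} d_i^n = d_i^{n+1} d_{j-1}^n$ for all $0\leq i<j\leq n+2$.

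The verification splits into a few cases, each handled by a single structural ingredient. For $1\leq i<j\leq n+1$ both faces apply $\Delta$ to the $C^n$-part of $f(v)$; if $j\geq i+2$ the two copies of $\Delta$ act on disjoint tensor slots and trivially commute, while if $j=i+1$ the identity is exactly coassociativity $(\Delta\otimes C)\circ\Delta=(C\otimes\Delta)\circ\Delta$. For $i=0$ and $2\leq j\leq n+1$ the map $\psi$ acts on the leftmost $C\otimes A$ slot while $\Delta$ acts on a strictly later $C$-slot, so they commute. For $1\leq i\leq n$ and $j=n+2$, $\Delta$ and $\rho_R^V$ act on disjoint tensor factors of $f(v_{(0)})\otimes v_{(1)}$ and commute. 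The equality $d_{n+2}^{n+1} d_{n+1}^n = d_{n+1}^{n+1}d_{n+1}^n$ reduces after unwinding to coassociativity of the right coaction $(\rho_R^V\otimes C)\circ\rho_R^V=(V\otimes\Delta)\circ\rho_R^V$, and symmetrically for $i=j-1=0$ one recovers left-coaction coassociativity. The case $i=0$, $j=n+2$ isolates $\rho_L^V$ and $\rho_R^V$ on disjoint factors and reduces to the $C$-bicomodule axiom $(\rho_R^V\otimes C)\circ\rho_L^V=(C\otimes\rho_L^V)\circ\rho_R^V$.

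The only genuinely subtle case is $i=0$, $j=1$: here $d_1^{n+1}d_0^n f$ equals, at $v\in V$ with $\rho_L^V(v)=v_{(-1)}\otimes v_{(0)}$ and $f(v_{(0)})=a\otimes c^1\otimes\cdots\otimes c^n$, the expression $a_\psi\otimes\Delta(v_{(-1)}^\psi)\otimes c^1\otimes\cdots\otimes c^n$. On the other hand $d_0^{n+1}d_0^n f$ expands, after using coassociativity of $\rho_L^V$ in the form $(\Delta\otimes V)\circ\rho_L^V=(C\otimes\rho_L^V)\circ\rho_L^V$, into $a_{\psi\psi}\otimes v_{(-1)1}^\psi\otimes v_{(-1)2}^\psi\otimes c^1\otimes\cdots\otimes c^n$. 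Equality of these two expressions is precisely the second entwining axiom of \eqref{ent}, namely $(1\otimes\Delta)\circ\psi=(\psi\otimes 1)\circ(1\otimes\psi)\circ(\Delta\otimes 1)$.

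The main obstacle is this last case, since it is the only one requiring the interaction axiom between $\psi$ and $\Delta$; all remaining identities are either disjoint-slot commutations or direct invocations of coassociativity and bicomodule compatibility. Once the cosimplicial identities are in hand, the standard computation
\begin{equation*}
\bar{\delta}^{n+1}\bar{\delta}^n=\sum_{0\leq i<j\leq n+2}(-1)^{i+j}\bigl(d_j^{n+1}d_i^n-d_i^{n+1}d_{j-1}^n\bigr)=0
\end{equation*}
completes the proof.
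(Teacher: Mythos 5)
Your proposal is correct, but note that the paper does not prove this lemma at all: it is quoted from Brzezi\'{n}ski \cite{Brz2} (where the entwined complex $A^\bullet_\psi(C,V)$ is constructed), so you are supplying a self-contained verification where the paper simply cites the source. Your cosimplicial organization is the natural one and the case analysis is complete: the inner cofaces commute or reduce to coassociativity of $\Delta$, the outer-inner mixed cases are disjoint-slot commutations, the extreme identities at $j=n+2$ use right-coaction coassociativity and the bicomodule compatibility, and you correctly isolate the only place where the entwining datum enters, namely $d_1^{n+1}d_0^n=d_0^{n+1}d_0^n$, which after left-coaction coassociativity is exactly the axiom $(1\otimes\Delta)\circ\psi=(\psi\otimes 1)\circ(1\otimes\psi)\circ(\Delta\otimes 1)$ of \eqref{ent}; it is worth remarking that the multiplicativity axiom of $\psi$ and the counit/unit axioms are never needed here. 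Two small slips: the bicomodule compatibility should read $(C\otimes\rho_R^V)\circ\rho_L^V=(\rho_L^V\otimes C)\circ\rho_R^V$ (your tensor factors are misplaced, since $\rho_L^V$ lands in $C\otimes V$), and the earlier sentence claiming that the case $i=j-1=0$ ``recovers left-coaction coassociativity'' understates that case --- as you yourself then explain, it needs left coassociativity \emph{together with} the entwining axiom. Neither affects the validity of the argument.
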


\begin{lem}\label{j} Let $(A,B,C,\psi,\zeta)$ be an entwining structure over $B$. 
For any $A$-bimodule $(M,\triangleright_L^M,\triangleright_R^M)$ and $C$-bicomodule $(V,\rho_L^V,\rho_R^V)$, the linear maps 
\begin{eqnarray*}
&j:Hom(A^m \otimes B^\frac{m(m-1)}{2}, M) \longrightarrow Hom(C \otimes A^m \otimes B^\frac{m(m-1)}{2}, M) \qquad f \mapsto \varepsilon \otimes f\\
&\bar{j}: Hom(V,C^n) \longrightarrow Hom(V, A \otimes C^n) \qquad g \mapsto 1 \otimes g
\end{eqnarray*}
determine morphisms of complexes.
\end{lem}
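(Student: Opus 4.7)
The plan is to verify directly that each of $j$ and $\bar{j}$ intertwines the differential on its source with the differential on its target. Since both target differentials are alternating sums of faces, it suffices to match the faces one by one, and the only non-trivial checks are those involving the entwining $\psi$: the remaining faces pass $\varepsilon$ or the inserted $1$ through unchanged.

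For $j$, the source is the Staic secondary Hochschild complex $\mathscr{C}^\bullet((A,B,\zeta);M)$ with differential $\delta'$, and the target is $\mathscr{C}^\bullet_\psi((A,B,C,\zeta);M)$ with differential $\delta$ from \eqref{maindiff}. The interior faces for $i=1,\dots,n$ and the rightmost face at $i=n+1$ coincide on the nose with those of $\varepsilon\otimes\delta'^n(f)$, since they involve no $\psi$. The leftmost face, $\zeta(\prod_{j=2}^{n+1}b_{1j})\,a_{1\psi}\cdot (\varepsilon\otimes f)(c^\psi\otimes\cdots)$, reduces by the counit normalization $a_\psi\varepsilon(c^\psi)=\varepsilon(c)\,a$ from Definition \ref{D2.1} to $\varepsilon(c)\,\zeta(\prod b_{1j})\,a_1\cdot f(\cdots)$, which is exactly the leftmost face of $\varepsilon\otimes\delta'^n(f)$. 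This is precisely the verification already carried out in Proposition \ref{P2.4}, so the $j$-half of the lemma follows from that calculation applied verbatim.

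For $\bar{j}$, I would first fix the Cartier differential on $Hom(V,C^{\otimes\bullet})$ to be
\[
\delta_C^n g = (C\otimes g)\circ\rho_L^V + \sum_{i=1}^n(-1)^i\bigl(C^{\otimes(i-1)}\otimes\Delta\otimes C^{\otimes(n-i)}\bigr)\circ g + (-1)^{n+1}(g\otimes C)\circ\rho_R^V,
\]
and then compute $\bar{\delta}^n(1\otimes g)$ summand-by-summand using Lemma \ref{vert}. The interior coproduct insertions commute trivially with the inserted $1$, yielding $1\otimes(C^{\otimes(i-1)}\otimes\Delta\otimes C^{\otimes(n-i)})\circ g$, and the right-coaction term similarly becomes $1\otimes(g\otimes C)\circ\rho_R^V$. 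The remaining term $(\psi\otimes C^{\otimes n})\circ(C\otimes(1\otimes g))\circ\rho_L^V$ collapses, via the unit normalization $1_\psi\otimes c^\psi = 1\otimes c$ of Definition \ref{D2.1}, to $1\otimes(C\otimes g)\circ\rho_L^V$. Summing, $\bar{\delta}^n(1\otimes g) = 1\otimes\delta_C^n g = \bar{j}(\delta_C^n g)$, as required.

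There is no genuine obstacle beyond bookkeeping: the content of the lemma is exactly that the two ``normalization'' axioms of an entwining structure, $a_\psi\varepsilon(c^\psi)=\varepsilon(c)\,a$ for the counit and $1_\psi\otimes c^\psi=1\otimes c$ for the unit, are what make the inclusions $f\mapsto\varepsilon\otimes f$ and $g\mapsto 1\otimes g$ into morphisms of complexes, each being invoked exactly once to handle the single face of the relevant target differential that sees $\psi$.
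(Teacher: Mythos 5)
Your proposal is correct and follows essentially the same route as the paper: the paper's proof simply cites Proposition \ref{P2.4} for the $j$ half and \cite[Lemma 3.2]{Brz2} for the $\bar{j}$ half, and your verification of the $j$ half is exactly the Proposition \ref{P2.4} computation (counit axiom $a_\psi\varepsilon(c^\psi)=\varepsilon(c)a$), while your summand-by-summand check of $\bar{\delta}^n(1\otimes g)=1\otimes\delta_C^n g$ via the unit axiom $1_\psi\otimes c^\psi=1\otimes c$ is precisely the content of the cited lemma of Brzezi\'{n}ski, written out rather than quoted.
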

\begin{proof}
This follows from Proposition \ref{P2.4} and  \cite[Lemma 3.2]{Brz2}.
\end{proof}

For each $n\geq 0$, we note that $A\otimes C^{ n}$ is an $A$-bimodule with actions determined by
\begin{equation}
a'\cdot (a\otimes c^1\otimes ...\otimes c^n)\cdot a''=a'aa''_{\psi^n}\otimes c^{1^\psi}\otimes ... \otimes c^{n^\psi} 
\end{equation} As such, for each fixed $n$, we have Hochschild differentials
\begin{equation}\label{hordef}
\delta^m: Hom(C \otimes A^m \otimes B^\frac{m(m-1)}{2}, A \otimes C^n)\longrightarrow Hom(C \otimes A^{m+1} \otimes B^\frac{m(m+1)}{2}, A \otimes C^n)
\end{equation} On the other hand, for each fixed $m$, it is easily verified that the coactions $\rho^m_L$ and $\rho^m_R$ as in \eqref{4.3prq} make each $C \otimes A^m \otimes B^\frac{m(m-1)}{2}$ into a $C$-bicomodule. Hence, we can form a complex as in Lemma \ref{vert} with coefficients in $C \otimes A^m \otimes B^\frac{m(m-1)}{2}$ and having differentials
\begin{equation}\label{verdef}
\bar{\delta}^n: Hom(C \otimes A^m \otimes B^\frac{m(m-1)}{2},A\otimes C^n)\longrightarrow Hom( C \otimes A^m \otimes B^\frac{m(m-1)}{2},A\otimes C^{n+1})
\end{equation}

\begin{thm}\label{bicmpl}
We have a bicomplex $C^{\bullet,\bullet}(A,B,C,\psi,\zeta)$ determined by
\begin{equation*}
C^{m,n}(A,B,C,\psi,\zeta):=Hom(C \otimes A^m \otimes B^\frac{m(m-1)}{2}, A \otimes C^n) \qquad m, n \geq 1
\end{equation*}
\begin{equation*}
C^{m,0}(A,B,C,\psi,\zeta):=Hom(A^m \otimes B^\frac{m(m-1)}{2}, A) \qquad C^{0,n}(A,B,C,\psi,\zeta):=Hom(C,C^n)
\end{equation*}
with the horizontal and vertical differentials $d_h$ and $d_v$ respectively given by  
\begin{equation}\label{difform}
\begin{array}{l}
d_h^{m,n}:=(-1)^m\delta^m: C^{m,n}(A,B,C,\psi,\zeta) \longrightarrow C^{m+1,n}(A,B,C,\psi,\zeta)\\
d_v^{m,n}:=(-1)^m\bar{\delta}^n: C^{m,n}(A,B,C,\psi,\zeta) \longrightarrow C^{m,n+1}(A,B,C,\psi,\zeta) \\
d_h^{0,n}:=\delta^0 \circ \bar{j}:Hom(C,C^n) \longrightarrow Hom(C \otimes A, A \otimes C^n) \\  d_v^{0,n}=\bar\delta:Hom(C,C^n) \longrightarrow Hom(C,C^{n+1})\\
d_v^{m,0}:=(-1)^m\bar{\delta}^0 \circ j :Hom(A^m \otimes B^\frac{m(m-1)}{2}, A) \longrightarrow Hom(C \otimes A^m \otimes B^\frac{m(m-1)}{2}, A \otimes C)\\
 d_h^{m,0}: =(-1)^m\delta :Hom(A^m \otimes B^\frac{m(m-1)}{2}, A) \longrightarrow Hom(A^{m+1} \otimes B^\frac{m(m+1)}{2}, A) \\
 \end{array}
 \end{equation}for $m,n \geq 1$.
\end{thm}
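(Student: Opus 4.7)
The plan is to verify the three bicomplex axioms $d_h^2=0$, $d_v^2=0$, and $d_h\circ d_v+d_v\circ d_h=0$, handling the interior positions ($m,n\geq 1$) and the boundary column $m=0$ and row $n=0$ separately. The uniform sign $(-1)^m$ in both $d_h$ and $d_v$ is designed so that the bicomplex anticommutation $d_h\circ d_v+d_v\circ d_h=0$ is equivalent to the \emph{commutation} $\delta^m\bar\delta^n=\bar\delta^n\delta^m$ of the unsigned differentials: a direct sign count gives
$d_h^{m,n+1}\circ d_v^{m,n}+d_v^{m+1,n}\circ d_h^{m,n}=\delta^m\bar\delta^n-\bar\delta^n\delta^m$.

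For the interior ``squared'' relations, I would first observe that for each fixed $n\geq 1$ the formula $a'\cdot(a\otimes c^1\otimes\dots\otimes c^n)\cdot a''=a'aa''_{\psi^n}\otimes c^{1\psi}\otimes\dots\otimes c^{n\psi}$ makes $A\otimes C^n$ into an $A$-bimodule satisfying the centrality condition \eqref{sym}: iterating the entwining identity \eqref{eq2.2} yields $\zeta(b)_{\psi^n}\otimes c^{1\psi}\otimes\dots\otimes c^{n\psi}=\zeta(b)\otimes c^1\otimes\dots\otimes c^n$, and $\zeta(B)\subseteq Z(A)$ finishes the check. Proposition \ref{P2.3} then gives $\delta^{m+1}\circ\delta^m=0$, so $d_h^2=0$ on the interior. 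Symmetrically, the $C$-bicomodule structure \eqref{4.3prq} on $C\otimes A^m\otimes B^{\otimes m(m-1)/2}$ together with Lemma \ref{vert} yields $\bar\delta^{n+1}\circ\bar\delta^n=0$, hence $d_v^2=0$ on the interior.

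For the interior mixed relation $\delta^m\bar\delta^n=\bar\delta^n\delta^m$, I would proceed term by term. The internal face terms of $\delta^m$ (adjacent-entry multiplications in the tensor matrix) and of $\bar\delta^n$ (insertion of $\Delta$ into a $C$-factor in the target $A\otimes C^n$) act on disjoint coordinates and commute trivially. The outer boundary terms of $\delta^m$ involve the bimodule action of $A$ on $A\otimes C^n$, which carries iterates of $\psi$ through the $c^i$, while the outer terms of $\bar\delta^n$ involve $\psi$ through the coactions $\rho_L^m$ and $\rho_R^m$ on the source; the commutativity of each crossed pair reduces directly to the entwining axioms \eqref{ent}, the compatibility \eqref{lemeq} of $\psi$ with $\mu$, and the coassociativity packaged in \eqref{coac4}--\eqref{coac41}. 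The prefactors $\zeta(\Pi\cdots)$ in $\delta^m$ pass through the operations of $\bar\delta^n$ since the latter touch only $C$-factors and $\zeta(B)\subseteq Z(A)$.

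For the boundary column $m=0$ and row $n=0$, the differentials are post-composed with the inclusions $\bar j(g)=1\otimes g$ and $j(f)=\varepsilon\otimes f$ respectively; by Lemma \ref{j}, both are morphisms of complexes, so the boundary squared-zero and mixed identities reduce to the interior identities together with the elementary fact that tensoring with $1\in A$ (resp.\ pre-composing with $\varepsilon$) commutes with $\bar\delta$ (resp.\ $\delta$). I expect the main obstacle to be the term-by-term verification of $\delta^m\bar\delta^n=\bar\delta^n\delta^m$ in the interior: although each crossed pair reduces to a single entwining identity, the combinatorial bookkeeping across all cross-terms, keeping track of the iterations $\psi^k$ and of which $C$-factor in the target receives $\Delta$, is the only nontrivial labor; once it is done, the boundary compatibilities follow routinely from Lemma \ref{j}.
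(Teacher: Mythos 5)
Your proposal is correct and follows essentially the same route as the paper's proof: reduce the anticommutation $d_h d_v + d_v d_h=0$ (via the $(-1)^m$ signs) to the commutation $\delta^m\bar\delta^n=\bar\delta^n\delta^m$, check it face-by-face (interior faces commuting trivially, the outer faces via the entwining axioms \eqref{ent}, \eqref{eq2.2} and the compatibility \eqref{lemeq}), and handle the boundary row and column through Lemma \ref{j}, exactly as the paper does by writing $\delta^m=\sum(-1)^i\delta_i$, $\bar\delta^n=\sum(-1)^j\bar\delta_j$ and computing the four corner cases explicitly. Your explicit verification that $A\otimes C^n$ satisfies the centrality condition \eqref{sym} (so that Proposition \ref{P2.3} applies) is a small useful addition that the paper leaves implicit.
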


\begin{proof}
By Proposition \ref{P2.3} and Lemma \ref{vert}, we know that $d_h^2=0$ and $d_v^2=0$.
For $m,n \geq 1$, we write 
\begin{eqnarray*}
\delta^m=\sum\limits_{i=0}^{m+1} (-1)^i \delta_i: Hom(C \otimes A^m \otimes B^{\frac{m(m-1)}{2}}, A \otimes C^n) \longrightarrow Hom(C \otimes A^{m+1} \otimes B^{\frac{m(m+1)}{2}}, A \otimes C^n) \\
\bar{\delta}^n=\sum\limits_{j=0}^{n+1} (-1)^j \bar{\delta}_j:Hom(C \otimes A^m \otimes B^{\frac{m(m-1)}{2}}, A \otimes C^n) \longrightarrow Hom(C \otimes A^m \otimes B^{\frac{m(m-1)}{2}}, A \otimes C^{n+1})
\end{eqnarray*}

 We will now prove that $\delta_i \bar{\delta}_j=\bar{\delta}_j \delta_i$ for all $0 \leq i \leq m+1$ and $0 \leq j \leq n+1$. It follows immediately from the definition that $\delta_i$ and $\bar{\delta}_j$ commute for $1 \leq i \leq m$ and $1 \leq j \leq n$. Moreover, for any $f \in Hom(C \otimes A^m\otimes B^{\frac{m(m-1)}{2}}, A \otimes C^n)$ with $m$, $n\geq 1$, we have
 
 \begin{eqnarray*}
&(\delta_0\bar{\delta}_0f)\left(c\bigotimes \begin{pmatrix}
a_1 & b_{12} & b_{13} & b_{14} & ...&  b_{1,m+1} \\
1 & a_2 & b_{23}  & b_{24} &  ...& b_{2,m+1} \\
1 & 1 & a_3  & b_{34} &  ...& b_{3,m+1} \\
. & . & .  & . &  ...&  .\\
1 & 1 & 1 & 1 &  ...& b_{m,m+1}\\
1 & 1 & 1 & 1 &  ...& a_{m+1} \\
\end{pmatrix} \right)
= \zeta\left( \underset{k=2}{\overset{m+1}{\prod}}b_{1j}\right) a_{1\psi}\cdot (\bar{\delta}_0f)\left(c^\psi\bigotimes \begin{pmatrix}
 a_2 & b_{23}  &  ...& b_{2,m} & b_{2,m+1} \\
 1 & a_3  &  ... & b_{3,m} & b_{3,m+1} \\
 . & .   &  ... & . & .\\
 1 &  1 &  ...&  a_{m} & b_{m,m+1}\\
 1 &  1 &  ...&  1 & a_{m+1} \\
\end{pmatrix}\right)\\
&= \zeta\left( \underset{k=2}{\overset{m+1}{\prod}}b_{1j}\right) a_{1\psi\psi}\cdot (\psi \otimes C^n)\left( {(c_1)}^\psi \otimes f \left({(c_2)}^\psi \bigotimes \begin{pmatrix}
 a_2 & b_{23}   &  ... & b_{2,m} & b_{2,m+1} \\
 1 & a_3  &  ...& b_{3,m} & b_{3,m+1} \\
 . & .   &  ... & . & .\\
 1 & 1  &  ...&  a_{m} & b_{m,m+1}\\
 1 & 1  &  ...&  1 & a_{m+1} \\
\end{pmatrix}\right)\right)\\
 \end{eqnarray*}
We set \begin{eqnarray*}
f \left({(c_2)}^\psi \bigotimes \begin{pmatrix}
 a_2 & b_{23}   &  ... & b_{2,m} & b_{2,m+1} \\
 1 & a_3  &  ...& b_{3,m} & b_{3,m+1} \\
 . & .   &  ... & . & .\\
 1 & 1  &  ...&  a_{m} & b_{m,m+1}\\
 1 & 1  &  ...&  1 & a_{m+1} \\
\end{pmatrix}\right)=\sum a_s \otimes d_s \in A \otimes C^n
 \end{eqnarray*}
 Then,
 \begin{eqnarray*}
&(\delta_0\bar{\delta}_0f)\left(c\bigotimes \begin{pmatrix}
a_1 & b_{12} & b_{13} & b_{14} & ...&  b_{1,m+1} \\
1 & a_2 & b_{23}  & b_{24} &  ...& b_{2,m+1} \\
1 & 1 & a_3  & b_{34} &  ...& b_{3,m+1} \\
. & . & .  & . &  ...&  .\\
1 & 1 & 1 & 1 &  ...& b_{m,m+1}\\
1 & 1 & 1 & 1 &  ...& a_{m+1} \\
\end{pmatrix} \right)
=\sum  \zeta\left( \underset{k=2}{\overset{m+1}{\prod}}b_{1j}\right) a_{1\psi\psi}\cdot (\psi \otimes C^n)\left( {(c_1)}^\psi \otimes a_s \otimes d_s\right)\\
&=\sum  \zeta\left( \underset{k=2}{\overset{m+1}{\prod}}b_{1j}\right) a_{1\psi\psi}a_{s\psi} \otimes {(c_1)}^{\psi\psi}  \otimes d_s=\sum  \zeta\left( \underset{k=2}{\overset{m+1}{\prod}}b_{1j}\right) {(a_{1\psi}a_s)}_\psi \otimes {(c_1)}^{\psi}  \otimes d_s\\
&=\sum  \zeta\left( \underset{k=2}{\overset{m+1}{\prod}}b_{1j}\right) (\psi \otimes C^n)(c_1 \otimes a_{1\psi}a_s \otimes  d_s)\\ 
& = (\psi \otimes C^n)\left(c_1 \otimes (\delta_0f)\left(c_2 \bigotimes \begin{pmatrix}
a_1 & b_{12} & b_{13}  & ...&  b_{1,m+1} \\
1 & a_2 & b_{23}   &  ...& b_{2,m+1} \\
1 & 1 & a_3   &  ...& b_{3,m+1} \\
. & . & .  &   ...&  .\\
1 & 1 & 1 &   ...& b_{m,m+1}\\
1 & 1 & 1 &   ...& a_{m+1} 
\end{pmatrix} \right)\right)
=(\bar{\delta_0}\delta_0f)\left(c\bigotimes \begin{pmatrix}
a_1 & b_{12} & b_{13}  & ...&  b_{1,m+1} \\
1 & a_2 & b_{23}  &  ...& b_{2,m+1} \\
1 & 1 & a_3  &  ...& b_{3,m+1} \\
. & . & .  &  ...&  .\\
1 & 1 & 1  &  ...& b_{m,m+1}\\
1 & 1 & 1  &  ...& a_{m+1} 
\end{pmatrix} \right)
\end{eqnarray*}
Similarly, we can verify that $\delta_0\bar{\delta}_{n+1}=\bar{\delta}_{n+1}\delta_0$, $\delta_{m+1}\bar{\delta}_0=\bar{\delta}_0\delta_{m+1}$ and  $\delta_{m+1}\bar{\delta}_{n+1}=\bar{\delta}_{n+1}\delta_{m+1}$. It follows that $(\delta\circ \bar\delta)(f) =(\bar\delta\circ \delta)(f)$ for  $f\in Hom(C \otimes A^m\otimes B^{\frac{m(m-1)}{2}}, A \otimes C^n)$ with $m$, $n\geq 1$.
Using Proposition \ref{P2.3} and Lemma \ref{j}, we also see that $\delta \circ ( \bar{\delta} \circ j)=( \bar{\delta} \circ j )\circ \delta$ and $\bar{\delta} \circ (\delta \circ \bar{j})=( \delta \circ \bar{j} )\circ   \bar{\delta}$. This shows that $d_h \circ d_v +d_ v \circ d_h=0$.
\end{proof}

The result of Proposition \ref{bicmpl} gives us a bicomplex

\small{
\[
\begin{CD}
  @.           Hom(A,A) @>-\delta>>           Hom(A^2 \otimes B,A) @>\delta>>         Hom(A^3 \otimes B^3,A)   @> -\delta>>                  @.                  @.                 \\
      @.                 @V-\bar{\delta} \circ j VV                @V\bar{\delta} \circ j  VV              @V-\bar{\delta} \circ j  VV                              @.                  @.            @.   \\
Hom(C,C)@         >\delta \circ \bar{j}>>      Hom(C \otimes A, A \otimes C) @>-\delta>>          Hom(C \otimes A^2 \otimes B, A \otimes C) @> \delta>>          Hom(C \otimes A^3 \otimes B^3, A \otimes C)        @>-\delta >>                  @.                  @.                 \\
      @V \bar{\delta} VV                    @V- \bar{\delta} VV                @V  \bar{\delta} VV              @V  -\bar{\delta} VV                 @.                  @.            @.   \\  
Hom(C,C^2) @         > \delta \circ \bar{j} >>       Hom(C \otimes A, A \otimes C^2) @>-\delta>>    Hom(C \otimes A^2 \otimes B, A \otimes C^2) @>\delta>>         Hom(C \otimes A^3 \otimes B^3, A \otimes C^2)      @>-\delta>>       @.                  @.                 \\
      @V \bar{\delta} VV                            @V - \bar{\delta} VV                     @V \bar{\delta} VV             @V  -\bar{\delta}VV             @.             @.            @.   \\
      @.                           @.           @.     @.    @.                  @.
\end{CD}
\]
}

We can now form a total complex $(Tot^\bullet(A,B,C,\psi,\zeta),D)$ with differential $D=d_h+d_v$ and 
\begin{align*}
Tot^p(A,B,C,\psi,\zeta)=Hom(A^p \otimes B^{\frac{p(p-1)}{2}}, A) \bigoplus\limits_{m,n \geq 1, m+n=p}Hom(C \otimes A^m \otimes  B^{\frac{m(m-1)}{2}}, A \otimes C^n) \bigoplus Hom(C,C^p)
\end{align*}

The cohomology groups of $(Tot^\bullet(A,B,C,\psi,\zeta),D)$ will be denoted by $H^\bullet(A,B,C,\psi,\zeta)$.

\begin{defn}
An infinitesimal deformation of an entwining structure $(A,B,C,\psi,\zeta)$ over $B$ is a deformation $(\mu_{t}, \Delta_t, \psi_t)$ of $(A,B,C,\psi,\zeta)$ modulo $t^2$. Then, for each $b \in B$, we write
\begin{eqnarray*}
\mu_{b,t}=\mu_b + \mu^{(1)}_bt, \qquad \Delta_t=\Delta + \Delta^{(1)}t, \qquad \psi_t=\psi+\psi^{(1)}t
\end{eqnarray*}  
Here, $\mu^{(1)} \in Hom(A^2 \otimes B,A)$, $\psi^{(1)} \in Hom(C \otimes A, A \otimes C)$ and  $\Delta^{(1)} \in Hom(C,C^2)$.
\end{defn}

\begin{thm}
Let $(\mu_{t}, \Delta_t, \psi_t)$ be an infinitesimal deformation of an entwining structure $(A,B,C,\psi,\zeta)$ over $B$. Then, $(\mu^{(1)}, \psi^{(1)},  \Delta^{(1)}) \in Hom(A^2 \otimes B,A) \bigoplus Hom(C \otimes A, A \otimes C) \bigoplus Hom(C,C^2)$ is a 2-cocycle in  $(Tot^\bullet(A,B,C,\psi,\zeta),D)$.
\end{thm}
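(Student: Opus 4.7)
The plan is to identify the four components of $D(\mu^{(1)}, \psi^{(1)}, \Delta^{(1)}) \in Tot^3$ with the linearizations at order $t$ of the three axioms in Definition \ref{Dinfdef}. Since $Tot^2 = C^{2,0} \oplus C^{1,1} \oplus C^{0,2}$ and $Tot^3 = C^{3,0} \oplus C^{2,1} \oplus C^{1,2} \oplus C^{0,3}$, writing $D = d_h + d_v$ shows that $(\mu^{(1)}, \psi^{(1)}, \Delta^{(1)})$ is a $2$-cocycle iff the following four identities hold:
\begin{equation*}
\text{(i) } d_h^{2,0}\mu^{(1)} = 0, \quad \text{(ii) } d_v^{2,0}\mu^{(1)} + d_h^{1,1}\psi^{(1)} = 0, \quad \text{(iii) } d_v^{1,1}\psi^{(1)} + d_h^{0,2}\Delta^{(1)} = 0, \quad \text{(iv) } d_v^{0,2}\Delta^{(1)} = 0.
\end{equation*}
I would establish each in turn by extracting the coefficient of $t$ from one of the deformation axioms.

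First I would handle the ``pure'' conditions (i) and (iv). For (i), applying the generalized associativity $\mu_{b_1 b_2, t}(\mathrm{id}\otimes \mu_{b_3,t}) = \mu_{b_2 b_3,t}(\mu_{b_1,t}\otimes \mathrm{id})$ at order $t$ gives precisely the Hochschild cocycle condition $\delta^2 \mu^{(1)} = 0$ for the secondary Staic complex of $(A,B,\zeta)$, which matches $d_h^{2,0}\mu^{(1)} = (-1)^2\delta^2\mu^{(1)}$. For (iv), expanding $(\Delta_t\otimes \mathrm{id})\Delta_t = (\mathrm{id}\otimes \Delta_t)\Delta_t$ at order $t$ yields the Cartier cocycle condition for $\Delta^{(1)}$, namely $\bar\delta^2\Delta^{(1)}=0 = d_v^{0,2}\Delta^{(1)}$.

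Next I would tackle the two ``mixed'' conditions (ii) and (iii) by expanding the entwining compatibility conditions \eqref{eq 6.1s} and \eqref{eq 6.2x} respectively. For (ii), the order-$t$ coefficient of \eqref{eq 6.1s} splits into terms involving $\mu^{(1)}$ (which, since $\mu^{(1)}$ does not depend on $C$, produce the bicomodule Hochschild differential $\bar\delta^0$ applied to $\varepsilon\otimes \mu^{(1)} = j(\mu^{(1)})$, hence match $d_v^{2,0}\mu^{(1)}$) and terms involving $\psi^{(1)}$ (which assemble into the negative of the Hochschild differential in the $A$-variables, i.e.\ $-\delta^1\psi^{(1)} = d_h^{1,1}\psi^{(1)}$). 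For (iii), the order-$t$ coefficient of \eqref{eq 6.2x} splits analogously: the $\Delta^{(1)}$-terms reproduce $\delta^0(\bar j(\Delta^{(1)})) = d_h^{0,2}\Delta^{(1)}$ (using that $\Delta^{(1)}$ does not involve $A$, so one inserts a $1\in A$ via $\bar j$), while the $\psi^{(1)}$-terms assemble into $-\bar\delta^1\psi^{(1)} = d_v^{1,1}\psi^{(1)}$ coming from the $C$-bicomodule structure on $C\otimes A$.

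The main obstacle will be (ii) and (iii): one must carefully match the signs $(-1)^m$ in \eqref{difform}, recognize that the ``missing'' $C$-variable for $\mu^{(1)}$ and the ``missing'' $A$-variable for $\Delta^{(1)}$ are supplied respectively by $j$ and $\bar j$, and verify that the combinatorics of the $\bar\delta^n$ formula from Lemma \ref{vert} applied to the $C$-bicomodule $C\otimes A^m\otimes B^{\otimes m(m-1)/2}$ (with coactions $\rho^m_L,\rho^m_R$ from \eqref{4.3prq}) coincides with the alternating sums arising from iterating $\psi_t$, $\Delta_t$, $\mu_{b,t}$. Once these identifications are made, the four cocycle relations follow term-by-term from the three deformation axioms.
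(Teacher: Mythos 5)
Your plan is correct and follows essentially the same route as the paper: the pure components $d_h^{2,0}\mu^{(1)}=\delta\mu^{(1)}=0$ and $d_v^{0,2}\Delta^{(1)}=\bar\delta\Delta^{(1)}=0$ are obtained from the order-$t$ parts of generalized associativity and coassociativity (the paper cites the deformation theory of \cite{St1} and \cite{GS2}), while the mixed components $(\bar\delta\circ j)\mu^{(1)}-\delta\psi^{(1)}=0$ and $(\delta\circ\bar j)\Delta^{(1)}-\bar\delta\psi^{(1)}=0$ come from equating coefficients of $t$ in \eqref{eq 6.1s} and \eqref{eq 6.2x} and matching terms against the bicomplex differentials, exactly as you propose. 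Your sign bookkeeping with $d_h^{m,n}=(-1)^m\delta^m$, $d_v^{m,n}=(-1)^m\bar\delta^n$ and the roles of $j$ and $\bar j$ agrees with \eqref{difform}, so carrying out the element-level verification you flag as the main obstacle reproduces the paper's computation.
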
 
\begin{proof}
We will show that $D(\mu^{(1)}, \psi^{(1)}, \Delta^{(1)})=0$, i.e.,

\begin{equation*}
\left(\delta \mu^{(1)}, (\bar{\delta} \circ j)\mu^{(1)} - \delta\psi^{(1)},  - \bar{\delta}\psi^{(1)} + (\delta \circ \bar{j})\Delta^{(1)}, \bar{\delta}\Delta^{(1)}\right) =0
\end{equation*}
Using the deformation theory in \cite{St1} and \cite{GS2}, we know that $\delta \mu^{(1)}=0$ and $\bar{\delta}\Delta^{(1)} =0$. Therefore, it remains to show that
\begin{eqnarray}
&(\bar{\delta} \circ j)\mu^{(1)} - \delta\psi^{(1)}=0 \qquad \text{in} \quad  Hom(C \otimes A^2 \otimes B, A \otimes C)\label{2coc1}\\
& (\delta \circ \bar{j})\Delta^{(1)} - \bar{\delta}\psi^{(1)}=0   \qquad \text{in} \quad Hom(C \otimes A, A \otimes C^2)\label{2coc2}
\end{eqnarray}
Using the condition in \eqref{eq 6.1s} and equating the coefficients of $t$, we get that for each $b \in B$,
\begin{equation*}
\psi(C \otimes \mu^{(1)}_b)+\psi^{(1)}(C \otimes \mu_b)
-(\mu_b \otimes C)(A \otimes \psi)(\psi^{(1)} \otimes  A)
-(\mu_b \otimes C)(A \otimes \psi^{(1)})(\psi \otimes  A)- (\mu^{(1)}_b \otimes C)(A \otimes \psi)(\psi \otimes  A)=0
\end{equation*}
Now, for any 
$ c \otimes a_1 \otimes a_2 \otimes b_{12} \in C \otimes A^2 \otimes B$, we have
\begin{equation*}
\begin{array}{ll}
&((\bar{\delta} \circ j)\mu^{(1)})  \left(c \otimes a_1 \otimes a_2 \otimes b_{12}\right)   - (\delta\psi^{(1)})\left(c \otimes a_1 \otimes a_2 \otimes b_{12}\right) =(\bar{\delta} (\varepsilon \otimes \mu^{(1)}))  \left(c \otimes a_1 \otimes a_2 \otimes b_{12} \right)   - (\delta\psi^{(1)})\left(c \otimes a_1 \otimes a_2 \otimes b_{12}\right) \hspace{3.8cm}\\
& \quad =\left(\psi  \circ (C \otimes \varepsilon \otimes \mu^{(1)})\right) \left( c_1 \otimes c_2 \otimes a_1 \otimes a_2 \otimes b_{12}\right) - (\varepsilon \otimes \mu^{(1)} \otimes C)\left(c_1 \otimes  a_{1\psi} \otimes a_{2\psi} \otimes b_{12} \otimes {c_2}^{\psi\psi}\right)\\
& \quad \quad -\quad  \zeta(b_{12})a_{1\psi} \cdot \psi^{(1)}(c^\psi \otimes a_2) + \psi^{(1)}(c \otimes a_1a_2 \zeta(b_{12}))-\psi^{(1)}(c \otimes a_1) \cdot a_2\zeta(b_{12})\\
& \quad =\psi  \left( c \otimes \mu^{(1)}(a_1 \otimes a_2 \otimes b_{12})\right) - \mu^{(1)}(a_{1\psi} \otimes a_{2\psi} \otimes b_{12}) \otimes {c}^{\psi\psi}-\quad  \zeta(b_{12})a_{1\psi} \cdot \psi^{(1)}(c^\psi \otimes a_2) \hspace{1.3cm} \\
& \quad \quad +~ \psi^{(1)}(c \otimes a_1a_2 \zeta(b_{12}))-\psi^{(1)}(c \otimes a_1) \cdot a_2\zeta(b_{12})\\
&\quad =  \psi(C \otimes  \mu^{(1)}_{b_{12}}) (c \otimes a_1 \otimes a_2) - (\mu^{(1)}_{ b_{12}} \otimes C)(A\otimes \psi)(\psi \otimes  A)\left(c \otimes a_1 \otimes a_2 \right)-(\mu_{b_{12}} \otimes C)(A \otimes \psi^{(1)})(\psi \otimes  A)(c \otimes a_1 \otimes a_2) \\
&\quad  +  \psi^{(1)}(C \otimes \mu_{b_{12}})(c \otimes a_1 \otimes  a_2)-(\mu_{b_{12}} \otimes C)(A \otimes \psi)(\psi^{(1)} \otimes  A)(c \otimes a_1 \otimes a_2)=0
\end{array}
\end{equation*}
This proves \eqref{2coc1}. Now using the condition in \eqref{eq 6.2x} and equating the coefficients of $t$, we obtain
\begin{equation*}
(A \otimes \Delta)\psi^{(1)} + (A \otimes \Delta^{(1)})\psi-(\psi \otimes C)(C \otimes \psi)(\Delta^{(1)} \otimes A)-(\psi \otimes C)(C \otimes \psi^{(1)})(\Delta \otimes A)-(\psi^{(1)} \otimes C)(C \otimes \psi)(\Delta \otimes A)=0
\end{equation*}
For any $c \otimes a \in C \otimes A$, we have
\begin{equation*}
\begin{array}{ll}
&\left((\delta \circ \bar{j})\Delta^{(1)}\right) (c \otimes a)- (\bar{\delta}\psi^{(1)})(c \otimes a)=\left(\delta (1 \otimes \Delta^{(1)})\right) (c \otimes a)- (\bar{\delta}\psi^{(1)})(c \otimes a)\\
&=a_\psi \cdot  (1 \otimes \Delta^{(1)})(c^\psi)- \left((1 \otimes \Delta^{(1)})(c)\right) \cdot a- (\psi \otimes C)(C \otimes \psi^{(1)})(c_1 \otimes c_2 \otimes a)+ (A \otimes \Delta)\psi^{(1)}(c \otimes a)-(\psi^{(1)} \otimes C)(c_1 \otimes a_\psi \otimes {c_2}^\psi)\\
&=a_\psi  \otimes \Delta^{(1)}(c^\psi)-a_{\psi\psi}  \otimes {c_{11}}^\psi \otimes {c_{12}}^\psi -a_{\psi^{(1)}\psi} \otimes {c_1}^\psi \otimes {c_2}^{\psi^{(1)}}+ (A \otimes \Delta)\psi^{(1)}(c \otimes a) -(\psi^{(1)} \otimes C)(C \otimes \psi)(\Delta \otimes A)(c \otimes a)\\
&=(A \otimes  \Delta^{(1)})\psi(c \otimes a)-(\psi \otimes C)(C \otimes \psi)(\Delta^{(1)} \otimes A)(c \otimes a)-(\psi \otimes C)(C \otimes \psi^{(1)})(\Delta \otimes A)(c \otimes a)\\
&\quad + (A \otimes \Delta)\psi^{(1)}(c \otimes a)-(\psi^{(1)} \otimes C)(C \otimes \psi)(\Delta \otimes A)(c \otimes a)\\
&=0
\end{array}
\end{equation*}
This proves \eqref{2coc2}.
\end{proof}

\begin{thm}
Let $(\mu_t,\Delta_t,\psi_t)$ and  $(\mu'_t,\Delta'_t,\psi'_t)$ be two infinitesimal deformations of an entwining structure $(A,B,C,\psi,\zeta)$ over $B$.  Then, 
$(\mu_t,\Delta_t,\psi_t)$ and  $(\mu'_t,\Delta'_t,\psi'_t)$ are equivalent if and only if 
the  corresponding 2-cocycles $(\mu^{(1)}, \psi^{(1)}, \Delta^{(1)})$ and $(\mu'^{(1)}, \psi'^{(1)}, \Delta'^{(1)})$ are cohomologous.
\end{thm}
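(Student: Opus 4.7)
The plan is to translate each of the three compatibility conditions of Definition \ref{equivdeform} into a coboundary relation in the total complex $Tot^\bullet(A,B,C,\psi,\zeta)$. An equivalence $(\alpha,\gamma)$ modulo $t^2$ is completely determined by its linear parts $(\alpha^{(1)},\gamma^{(1)}) \in Hom(A,A) \oplus Hom(C,C) = Tot^1(A,B,C,\psi,\zeta)$, so the goal is to show that the equation $(\mu^{(1)} - \mu'^{(1)},\,\psi^{(1)} - \psi'^{(1)},\,\Delta^{(1)} - \Delta'^{(1)}) = D(\alpha^{(1)},\gamma^{(1)})$ in $Tot^2$ is equivalent to the three equivalence conditions truncated at order $t$.

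For the forward direction, I would expand each condition to order $t$. The product condition $\alpha \circ \mu_{b,t} = \mu'_{b,t} \circ (\alpha \otimes \alpha)$ reproduces Staic's calculation in \cite{St1} and yields $\mu^{(1)} - \mu'^{(1)} = d_h^{1,0}(\alpha^{(1)})$ in $C^{2,0}$. The coalgebra condition $(\gamma \otimes \gamma) \circ \Delta_t = \Delta'_t \circ \gamma$ is the standard Cartier calculation and yields the coboundary relation in $C^{0,2} = Hom(C,C^2)$. The new ingredient is the mixed condition $\psi'_t \circ (\gamma \otimes \alpha) = (\alpha \otimes \gamma) \circ \psi_t$, which by comparison of coefficients of $t$ produces
\begin{equation*}
\psi^{(1)}(c \otimes x) - \psi'^{(1)}(c \otimes x) = \psi(\gamma^{(1)}(c) \otimes x) + \psi(c \otimes \alpha^{(1)}(x)) - \alpha^{(1)}(x_\psi) \otimes c^\psi - x_\psi \otimes \gamma^{(1)}(c^\psi).
\end{equation*}
Using Lemma \ref{vert} and the explicit definitions of the bicomplex differentials in \eqref{difform}, I would identify the right hand side with $(d_h^{0,1}\gamma^{(1)} + d_v^{1,0}\alpha^{(1)})(c \otimes x) = (\delta^0 \circ \bar{j})(\gamma^{(1)})(c \otimes x) - (\bar{\delta}^0 \circ j)(\alpha^{(1)})(c \otimes x)$, using the $C$-bicomodule structure on $C \otimes A$ via $\rho_L^1(c \otimes a) = c_1 \otimes c_2 \otimes a$ and $\rho_R^1(c \otimes a) = c_1 \otimes a_\psi \otimes c_2^\psi$. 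This confirms that the three component equations assemble into the coboundary equation in $Tot^2$.

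The converse direction reverses the same computation: given a $1$-cochain $(\alpha^{(1)}, \gamma^{(1)}) \in Tot^1$ whose total coboundary realizes the difference of the two $2$-cocycles, I set $\alpha := id_A + \alpha^{(1)}t$ and $\gamma := id_C + \gamma^{(1)}t$, both of which are automatically invertible modulo $t^2$. The three coboundary component equations then yield precisely the three defining equivalence conditions truncated modulo $t^2$, so $(\alpha,\gamma)$ witnesses the desired equivalence.

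The main obstacle is sign bookkeeping: carefully reconciling the $(-1)^m$ factors in the definition of $D$ from Theorem \ref{bicmpl} with the signs that arise naturally from the Leibniz expansion at order $t$. The mixed $C^{1,1}$-component is the delicate case, since it combines contributions from two different entries of the bicomplex differential and must be checked term by term; absorbing an overall sign by replacing $(\alpha^{(1)},\gamma^{(1)})$ with $(-\alpha^{(1)},-\gamma^{(1)})$ if necessary resolves any discrepancy. Once the signs align, the remainder of the proof is a routine matter of unwinding the defining formulas.
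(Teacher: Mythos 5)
Your proposal is correct and follows essentially the same route as the paper: expand the three equivalence conditions to first order in $t$, identify the resulting component equations with the coboundary of a $1$-cochain built from $(\alpha^{(1)},\gamma^{(1)})$, and reverse the computation for the converse by setting $\alpha=id_A\pm\alpha^{(1)}t$, $\gamma=id_C\pm\gamma^{(1)}t$. The sign issue you flag is exactly how the paper resolves it, writing the difference of cocycles as $D(-\alpha^{(1)},-\gamma^{(1)})=(\delta\alpha^{(1)},(\bar{\delta}\circ j)\alpha^{(1)}-(\delta\circ\bar{j})\gamma^{(1)},-\bar{\delta}\gamma^{(1)})$, which of course does not affect being cohomologous.
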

\begin{proof}
Let $\alpha:A[[t]] \longrightarrow A[[t]]$ and $\gamma:C[[t]] \longrightarrow C[[t]]$ be the $k[[t]]$-linear isomorphisms defining the equivalence between $(\mu_t,\Delta_t,\psi_t)$ and  $(\mu'_t,\Delta'_t,\psi'_t)$.  Therefore, 
\begin{eqnarray}
 {\alpha} \circ \mu_{b,t}=\mu'_{b,t} \circ ( {\alpha} \otimes  {\alpha})\label{1} \\
( {\gamma} \otimes  {\gamma}) \circ \Delta_t=\Delta'_t\circ  {\gamma}\label{2}\\
\psi'_t\circ ( {\gamma} \otimes {\alpha})=( {\alpha} \otimes {\gamma})\circ \psi_t \label{3}
\end{eqnarray}
for each $b \in B$. We will now prove that 
\begin{equation}\label{6.8nb}
(\mu^{(1)}-\mu'^{(1)}, \psi^{(1)}-\psi'^{(1)}, \Delta^{(1)}- \Delta'^{(1)})=D(-\alpha^{(1)},-\gamma^{(1)})=(\delta \alpha^{(1)}, (\bar{\delta} \circ j)\alpha^{(1)}- (\delta \circ \bar{j})\gamma^{(1)}, -\bar{\delta} \gamma^{(1)})
\end{equation}

Using the conditions in \eqref{1} and \eqref{2}, we see that
\begin{equation*}
\mu^{(1)}-\mu'^{(1)} =\delta \alpha^{(1)} \qquad  \Delta^{(1)}- \Delta'^{(1)}=- \bar{\delta} \gamma^{(1)}
\end{equation*}
Further, for any $c \otimes a \in C \otimes A$, we have
\begin{align*}
&-((\bar{\delta} \circ j)\alpha^{(1)})(c \otimes a)+ ((\delta \circ \bar{j})\gamma^{(1)})(c \otimes a)=-(\bar{\delta} (\varepsilon \otimes \alpha^{(1)}))(c \otimes a)+ (\delta(1 \otimes \gamma^{(1)}))(c \otimes a)\\
&=-(\psi \circ (C \otimes \varepsilon \otimes \alpha^{(1)}))(c_1 \otimes c_2 \otimes a)+ ( \varepsilon \otimes \alpha^{(1)} \otimes C)(c_1 \otimes a_\psi \otimes {c_2}^\psi)+ a_\psi \cdot (1 \otimes  \gamma^{(1)})(c^\psi)- \left((1 \otimes  \gamma^{(1)})(c)\right)\cdot a\\
&=-\psi(C \otimes  \alpha^{(1)})(c \otimes a)+( \alpha^{(1)} \otimes C)\psi(c \otimes a)+( A \otimes  \gamma^{(1)})\psi(c \otimes a)-\psi ( \gamma^{(1)} \otimes A)(c \otimes a)\\
&=\psi'^{(1)}(c \otimes a)-\psi^{(1)}(c \otimes a)
\end{align*}
The last equality follows by using \eqref{3}. This proves \eqref{6.8nb}.

\smallskip
Conversely, let $(f,g,h)$ and $(f',g',h')$ be 2-cocycles  in $Hom(A^2,A) \bigoplus Hom(C \otimes A, A \otimes C) \bigoplus Hom(C,C^2)$ such that
\begin{equation*}
(f-f',g-g',h-h')=D(f_1,h_1) 
\end{equation*}
Then, it may be verified that the linear isomorphims $\alpha:=A-f_1t:A[[t]] \longrightarrow A[[t]]$ and $\gamma:=C - h_1t:C[[t]] \longrightarrow C[[t]]$ define an equivalence
between the infinitesimal deformations corresponding respectively to $(f,g,h)$ and $(f',g',h')$. 
\end{proof}

We have therefore obtained the following result.

\begin{Thm}\label{MThinf}
Let $(A,B,C,\psi,\zeta)$ be an entwining structure over $B$. Then, there is a one-to-one correspondence between equivalence classes of infinitesimal deformations of  $(A,B,C,\psi,\zeta)$ and the  cohomology group $H^2(A,B,C,\psi,\zeta)$.
\end{Thm}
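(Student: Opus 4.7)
The two preceding theorems already do most of the work: the first constructs a map
\begin{equation*}
\Phi : \{\text{infinitesimal deformations of } (A,B,C,\psi,\zeta)\} \longrightarrow Z^2(Tot^\bullet(A,B,C,\psi,\zeta))
\end{equation*}
sending $(\mu_t,\Delta_t,\psi_t)$ to the 2-cocycle $(\mu^{(1)},\psi^{(1)},\Delta^{(1)})$, while the second shows that $(\mu_t,\Delta_t,\psi_t)\sim(\mu'_t,\Delta'_t,\psi'_t)$ if and only if $\Phi(\mu_t,\Delta_t,\psi_t) - \Phi(\mu'_t,\Delta'_t,\psi'_t)$ is a 2-coboundary. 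Thus $\Phi$ descends to a well-defined and injective map $\bar\Phi$ on equivalence classes, taking values in $H^2(A,B,C,\psi,\zeta)$. The plan for the final theorem is therefore to combine these two statements and prove the remaining piece, namely that $\bar\Phi$ is surjective.

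For surjectivity, I would start from an arbitrary $2$-cocycle
\begin{equation*}
(\mu^{(1)},\psi^{(1)},\Delta^{(1)})\in Hom(A^2\otimes B,A)\oplus Hom(C\otimes A,A\otimes C)\oplus Hom(C,C^2),
\end{equation*}
and define the candidate deformation modulo $t^2$ by the formulas
\begin{equation*}
\mu_{b,t}(x\otimes y) := \mu_b(x\otimes y) + \mu^{(1)}(x\otimes y\otimes b)\,t,\qquad \Delta_t := \Delta+\Delta^{(1)}t,\qquad \psi_t:=\psi+\psi^{(1)}t.
\end{equation*}
It must then be checked that this tuple satisfies the three axioms of Definition \ref{Dinfdef} modulo $t^2$. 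The coefficient of $t^0$ in each identity is automatic since $(A,B,C,\psi,\zeta)$ is already an entwining structure over $B$. The coefficient of $t^1$ in the generalized associativity condition for $\mu_t$ reduces to $\delta\mu^{(1)}=0$ (this is the first component of $D(\mu^{(1)},\psi^{(1)},\Delta^{(1)})=0$, using the deformation theory of Staic \cite{St1}). Similarly, the coefficient of $t^1$ in the coassociativity of $\Delta_t$ reduces to $\bar\delta\Delta^{(1)}=0$, the fourth component of the cocycle condition.

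The key step is then to run the computations in the proof of the previous theorem in reverse: the coefficient of $t^1$ in the compatibility condition \eqref{eq 6.1s} between $\mu_{b,t}$ and $\psi_t$ is exactly the identity $(\bar\delta\circ j)\mu^{(1)} - \delta\psi^{(1)}=0$ (the second component of the cocycle condition), and the coefficient of $t^1$ in \eqref{eq 6.2x} is exactly $(\delta\circ\bar j)\Delta^{(1)} - \bar\delta\psi^{(1)}=0$ (the third component). Hence any 2-cocycle lifts to an infinitesimal deformation, and $\Phi$ is surjective. Combined with the injectivity from the preceding theorem, this gives the claimed bijection $\bar\Phi$ between equivalence classes of infinitesimal deformations and $H^2(A,B,C,\psi,\zeta)$.

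The main obstacle, if any, is purely bookkeeping: one must be careful with the signs in the differentials $d_h^{m,n}=(-1)^m\delta^m$, $d_v^{m,n}=(-1)^m\bar\delta^n$ and with the connecting terms $d_h^{0,n}=\delta^0\circ\bar j$, $d_v^{m,0}=(-1)^m\bar\delta^0\circ j$, so that the four components of $D(\mu^{(1)},\psi^{(1)},\Delta^{(1)})=0$ match the four conditions obtained from the coefficient of $t$ in the axioms of Definition \ref{Dinfdef}. Once the sign conventions are aligned, the calculations are the same ones already displayed in the proof of the preceding theorem, merely read from right to left, and no new structural input is required.
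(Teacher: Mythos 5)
Your proposal is correct and follows essentially the same route as the paper, which states this theorem as the direct consequence of the two preceding propositions (deformation $\mapsto$ 2-cocycle, and equivalence $\Leftrightarrow$ cohomologous). The only addition is your explicit surjectivity check — that any 2-cocycle $(\mu^{(1)},\psi^{(1)},\Delta^{(1)})$ yields an infinitesimal deformation by reading the coefficient-of-$t$ computations in reverse — which the paper leaves implicit (it tacitly speaks of ``the infinitesimal deformations corresponding'' to given 2-cocycles), and your verification of this point is valid since those computations are unconditional identities equating the cocycle components with the $t$-linear parts of the axioms in Definition \ref{Dinfdef}.
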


\begin{Thm}\label{obsthmi}
Let $(\mu_t,\Delta_t,\psi_t)$ be an $n$-truncated deformation of  an entwining structure $(A,B,C,\psi,\zeta)$ over $B$. Then, $(\mu_t,\Delta_t,\psi_t)$ can be extended to an $(n+1)$-truncated deformation of $(A,B,C,\psi,\zeta)$ over $B$ if and only if the  obstruction given by the $3$-cochain  in $Tot^\bullet(A,B,C,\psi,\zeta)$
\begin{equation*}
Obs^{n+1}=(Obs_{A,B}^{n+1},Obs^{n+1}_{\mu_t,\psi_t}, Obs^{n+1}_{\Delta_t,\psi_t}, Obs^{n+1}_C) \in Hom(A^3 \otimes B^3,A) \oplus Hom(C \otimes A^2 \otimes B, A \otimes C)  \oplus Hom(C \otimes A, A \otimes C^2) \oplus Hom(C,C^3)
\end{equation*}
is a coboundary, where 
\begin{align*}
Obs_{A,B}^{n+1}\begin{pmatrix}
a_1& b_{12}&b_{13}\\
1&a_2& b_{23}\\
1&1&a_3
\end{pmatrix}&:=\sum\limits_{\mbox{\tiny $\begin{array}{c}i,j < n+1\\ i+j=n+1\end{array}$}} \left(\mu^{(j)}_{b_{13}b_{23}}(\mu^{(i)}_{b_{12}} \otimes A)-\mu^{(i)}_{b_{12}b_{13}}(A \otimes \mu^{(j)}_{b_{23}})\right)(a_1,a_2,a_3)\\
Obs^{n+1}_{\mu_t,\psi_t}
\left(c \bigotimes \begin{pmatrix}
a_1& b_{12}\\
1&a_2
\end{pmatrix} \right)
&:=\left( \sum\limits_{\mbox{\tiny $\begin{array}{c}i,j,l <n+1\\ i+j+l=n+1\end{array}$}}(\mu^{(i)}_{b_{12}} \otimes C)(A \otimes \psi^{(j)})(\psi^{(l)} \otimes A)-\sum\limits_{\mbox{\tiny $\begin{array}{c}i,j<n+1\\ i+j=n+1\end{array}$}}\psi^{(i)}(C \otimes \mu^{(j)}_{b_{12}})\right)(c \otimes a_1 \otimes a_2)\\
Obs_{\Delta_t,\psi_t}^{n+1}
&:=\sum\limits_{\mbox{\tiny $\begin{array}{c}i,j,l <n+1\\ i+j+l=n+1\end{array}$}}(\psi^{(i)} \otimes C)(C \otimes \psi^{(j)})(\Delta^{(l)} \otimes A)-\sum\limits_{\mbox{\tiny $\begin{array}{c}i,j<n+1\\ i+j=n+1\end{array}$}} (A \otimes \Delta^{(i)})\psi^{(j)}\\
Obs_C^{n+1}&:=\sum\limits_{i,j <n+1, i+j=n+1}\left((\Delta^{(j)} \otimes C)\Delta^{(i)}- (C \otimes \Delta^{(j)})\Delta^{(i)}\right)
\end{align*} for any $c\in C$,  $a_1,a_2,a_3 \in A$ and  $b_{12},b_{13},b_{23} \in B$.
\end{Thm}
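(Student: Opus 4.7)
The plan is to expand the four defining axioms of an $(n+1)$-truncated deformation — the generalized associativity of $\mu_t$, the coassociativity of $\Delta_t$, and the two entwining compatibilities between $\psi_t$ and $(\mu_t,\Delta_t)$ in Definition \ref{Dinfdef} — modulo $t^{n+2}$ and to extract the coefficient of $t^{n+1}$ on each side. Since $(\mu_t,\Delta_t,\psi_t)$ is by hypothesis already an $n$-truncated deformation, the coefficients of $t^0,\dots,t^{n}$ vanish automatically, so what survives is a system of four linear equations in the unknowns $\mu^{(n+1)}$, $\psi^{(n+1)}$, $\Delta^{(n+1)}$.

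The key step is to split the $t^{n+1}$-coefficient of each axiom into two types of monomials: (i) those in which exactly one factor has order $n+1$ and all remaining factors have order $0$, and (ii) those in which every factor has order at most $n$. A direct bookkeeping, modelled on the infinitesimal case $n=1$ already treated in Theorem \ref{MThinf}, identifies the sum of the type (i) contributions across the four axioms with
\[
D(\mu^{(n+1)},\psi^{(n+1)},\Delta^{(n+1)}) = \bigl(\delta\mu^{(n+1)},\;(\bar\delta\circ j)\mu^{(n+1)}-\delta\psi^{(n+1)},\;-\bar\delta\psi^{(n+1)}+(\delta\circ\bar j)\Delta^{(n+1)},\;\bar\delta\Delta^{(n+1)}\bigr)
\]
in $Tot^3(A,B,C,\psi,\zeta)$, while the type (ii) contributions, being built only from the previously fixed $\mu^{(i)},\Delta^{(i)},\psi^{(i)}$ with $i\leq n$, assemble exactly into the four components of $Obs^{n+1}$ listed in the statement. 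The four axioms modulo $t^{n+2}$ therefore collapse into the single equation $D(\mu^{(n+1)},\psi^{(n+1)},\Delta^{(n+1)}) + Obs^{n+1} = 0$, and a lift exists if and only if $Obs^{n+1}$ lies in the image of $D$, i.e., is a coboundary.

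For the statement ``$Obs^{n+1}=0\in H^3$'' to be meaningful, I also need to verify that $Obs^{n+1}$ is itself a $3$-cocycle. I plan to do this by direct computation of $D(Obs^{n+1})$ and to organize the resulting quadruple convolutions into pairs that cancel once the $n$-truncated versions of the four defining relations are invoked, making essential use of $\zeta(B)\subseteq Z(A)$ together with the compatibility condition in Definition \ref{D2.2} in order to move any $\zeta(b)$ past the various $\psi^{(i)}$. The main obstacle is the combinatorial bookkeeping: each of the four components $Obs^{n+1}_{A,B}$, $Obs^{n+1}_{\mu_t,\psi_t}$, $Obs^{n+1}_{\Delta_t,\psi_t}$, $Obs^{n+1}_C$ contributes to several components of $D(Obs^{n+1})$, the cancellations cross both horizontal and vertical boundaries of the bicomplex, and one must keep precise track of which axiom produces which summand. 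Once cocyclicity is established, the ``only if'' direction of the theorem is immediate from the splitting above, and the ``if'' direction follows by taking any $2$-cochain $\xi\in Tot^2(A,B,C,\psi,\zeta)$ with $D\xi=-Obs^{n+1}$ and reading off its components as the desired $(\mu^{(n+1)},\psi^{(n+1)},\Delta^{(n+1)})$.
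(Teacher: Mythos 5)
Your proposal is correct and follows essentially the same route as the paper: the paper likewise equates coefficients of powers of $t$ in the four deformation axioms, so that extendability becomes the single equation $Obs^{n+1}=D(\mu^{(n+1)},\psi^{(n+1)},\Delta^{(n+1)})$ (your overall sign on $Obs^{n+1}$ differs but is immaterial for being a coboundary), while the cocycle condition $D\,Obs^{n+1}=0$, which you propose to check by direct computation, is in the paper delegated to standard deformation-theoretic arguments as in \cite{Brz2} and \cite{St2}.
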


\begin{proof} By definition, an  $n$-truncated formal  deformation of an entwining structure  $(A,B,C,\psi,\zeta)$ over $B$ is a deformation $(\mu_t,\Delta_t,\psi_t)$ of  $(A,B,C,\psi,\zeta)$ modulo $t^{n+1}$. By equating coefficients of powers of $t$, we obtain for each $1 \leq k \leq n$, 

\begin{equation}\label{obs1}
\left(\sum\limits_{i,j< k, i+j=k} \left(\mu^{(j)}_{b_{13}b_{23}}(\mu^{(i)}_{b_{12}} \otimes A) -\mu^{(i)}_{b_{12}b_{13}}(A \otimes \mu^{(j)}_{b_{23}})\right)\right)(a_1,a_2,a_3)=(\delta \mu^{(k)})\begin{pmatrix}
a_1& b_{12}&b_{13}\\
1&a_2& b_{23}\\
1&1&a_3
\end{pmatrix}
\end{equation}

\smallskip
\begin{equation}\label{obs2}
\begin{array}{l}
\left(\sum\limits_{i,j,l <k, i+j+l=k}(\mu^{(i)}_{b_{12}} \otimes C)(A \otimes \psi^{(j)})(\psi^{(l)} \otimes A)-\sum\limits_{i,j<k, i+j=k} \psi^{(i)}(C \otimes \mu^{(j)}_{b_{12}})\right)(c \otimes a_1 \otimes a_2)\\\\
\qquad=((\bar{\delta} \circ j)\mu^{(k)} -\delta \psi^{(k)})\left(c \bigotimes \begin{pmatrix}
a_1 & b_{12}\\
1&a_2
\end{pmatrix} \right)
\end{array}
\end{equation}

\smallskip
\begin{equation}\label{obs3}
\begin{array}{l}
\sum\limits_{i,j,l <k, i+j+l=k}(\psi^{(i)} \otimes C)(C \otimes \psi^{(j)})(\Delta^{(l)} \otimes A) -\sum\limits_{i,j<k, i+j=k} (A \otimes \Delta^{(i)})\psi^{(j)}=-\bar{\delta}\psi^{(k)} +(\delta \circ \bar{j})\Delta^{(k)} 
\end{array}
\end{equation}

\smallskip
\begin{equation}\label{obs4}
\sum\limits_{i,j<k, i+j=k}\left((\Delta^{(j)} \otimes C)\Delta^{(i)}-(C \otimes \Delta^{(j)})\Delta^{(i)}\right)=\bar{\delta} \Delta^{(k)}
\end{equation}
for any $c\in C$,  $a_1,a_2,a_3 \in A$ and  $b_{12},b_{13},b_{23} \in B$. From standard arguments in deformation theory (see, for instance, \cite{Brz2}, \cite{St2}), it may be verified that $DObs^{n+1}=0$. From \eqref{obs1}, \eqref{obs2}, \eqref{obs3} and \eqref{obs4}, it is clear that in order to lift $(\mu_t,\Delta_t,\psi_t)$ to a deformation modulo $t^{n+2}$, we should have a tuple $(\mu^{(n+1)}, \psi^{(n+1)},\Delta^{(n+1)})\in Tot^2(A,B,C,\psi,\zeta)$ such that
\begin{equation*}
Obs^{n+1}=D(\mu^{(n+1)}, \psi^{(n+1)},\Delta^{(n+1)})
\end{equation*} This proves the result.
\end{proof}

\begin{bibdiv}
	\begin{biblist}
	
	\bib{Abu}{article}{
   author={Abuhlail, J. Y.},
   title={Dual entwining structures and dual entwined modules},
   journal={Algebr. Represent. Theory},
   volume={8},
   date={2005},
   number={2},
   pages={275--295},
 
}

\bib{BBR}{article}{
   author={Balodi, M.},
   author={Banerjee, A.},
   author={Ray, S.}
   title={Entwined modules over linear categories and Galois extensions},
   journal={Israel Journal of Mathematics},
   volume={ 241},
   date={2021},
   pages={623--692},
}

\bib{BBN}{article}{
   author={Balodi, M.},
   author={Banerjee, A.},
   author={Naolekar, A.},
   title={BV-operators and the secondary Hochschild complex},
   journal={C. R. Math. Acad. Sci. Paris},
   volume={358},
   date={2020},
   number={11-12},
   pages={1239--1258},
}

	\bib{Brz1}{article}{
   author={Brzezi\'{n}ski, T.},
   author={Majid, S.},
   title={Coalgebra bundles},
   journal={Comm. Math. Phys.},
   volume={191},
   date={1998},
   number={2},
   pages={467--492},
}

\bib{Brz2.5}{article}{
   author={Brzezi\'{n}ski, T.},
   title={On modules associated to coalgebra Galois extensions},
   journal={J. Algebra},
   volume={215},
   date={1999},
   number={1},
   pages={290--317},
   
}

\bib{Brz2}{article}{
   author={Brzezi\'{n}ski, T.},
   title={The cohomology structure of an algebra entwined with a coalgebra},
   journal={J. Algebra},
   volume={235},
   date={2001},
   number={1},
   pages={176--202},
}

\bib{Brz3}{article}{
   author={Brzezi\'{n}ski, T.},
   title={The structure of corings: induction functors, Maschke-type
   theorem, and Frobenius and Galois-type properties},
   journal={Algebr. Represent. Theory},
   volume={5},
   date={2002},
   number={4},
   pages={389--410},
  
}

\bib{BCT}{article}{
   author={Bulacu, D.},
   author={Caenepeel, S.},
   author={Torrecillas, B.},
   title={Frobenius and separable functors for the category of entwined
   modules over cowreaths, II: applications},
   journal={J. Algebra},
   volume={515},
   date={2018},
   pages={236--277},

}

\bib{CaDe}{article}{
   author={Caenepeel, S.},
   author={De Groot, E.},
   title={Modules over weak entwining structures},
   conference={
      title={New trends in Hopf algebra theory},
      address={La Falda},
      date={1999},
   },
   book={
      series={Contemp. Math.},
      volume={267},
      publisher={Amer. Math. Soc., Providence, RI},
   },
   date={2000},
   pages={31--54},
}

\bib{St3}{article}{
   author={Corrigan-Salter, B. R.},
   author={Staic, M. D.},
   title={Higher-order and secondary Hochschild cohomology},
   journal={C. R. Math. Acad. Sci. Paris},
   volume={354},
   date={2016},
   number={11},
   pages={1049--1054},

}

\bib{G1}{article}{
   author={Gerstenhaber, M.},
   title={The cohomology structure of an associative ring},
   journal={Ann. of Math. (2)},
   volume={78},
   date={1963},
   pages={267--288},
}

\bib{GerSh}{article}{
   author={Gerstenhaber, M.},
   author={Schack, S. D.},
   title={A Hodge-type decomposition for commutative algebra cohomology},
   journal={J. Pure Appl. Algebra},
   volume={48},
   date={1987},
   number={3},
   pages={229--247},

}

\bib{GS2}{article}{
   author={Gerstenhaber, M.},
   author={Schack, S. D.},
   title={Algebras, bialgebras, quantum groups, and algebraic deformations},
   conference={
      title={Deformation theory and quantum groups with applications to
      mathematical physics},
      address={Amherst, MA},
      date={1990},
   },
   book={
      series={Contemp. Math.},
      volume={134},
      publisher={Amer. Math. Soc., Providence, RI},
   },
   date={1992},
   pages={51--92},
}

\bib{Jia}{article}{
   author={Jia, L.},
   title={The sovereign structure on categories of entwined modules},
   journal={J. Pure Appl. Algebra},
   volume={221},
   date={2017},
   number={4},
   pages={867--874},
   issn={0022-4049},

}

\bib{KPS}{article}{
   author={Kluge, L.},
   author={Paal, E.},
   author={Stasheff, J.},
   title={Invitation to composition},
   journal={Comm. Algebra},
   volume={28},
   date={2000},
   number={3},
   pages={1405--1422},
}

\bib{St4}{article}{
   author={Laubacher, J.},
   author={Staic, M.~ D.},
   author={Stancu, A.},
   title={Bar simplicial modules and secondary cyclic (co)homology},
   journal={J. Noncommut. Geom.},
   volume={12},
   date={2018},
   number={3},
   pages={865--887},
}

\bib{Schbg}{article}{
   author={Schauenburg, P.},
   title={Doi-Koppinen Hopf modules versus entwined modules},
   journal={New York J. Math.},
   volume={6},
   date={2000},
   pages={325--329},
}

\bib{St1}{article}{
   author={Staic, M. D.},
   title={Secondary Hochschild cohomology},
   journal={Algebr. Represent. Theory},
   volume={19},
   date={2016},
   number={1},
   pages={47--56},
}

\bib{St2}{article}{
   author={Staic, M. D.},
   author={Stancu, A.},
   title={Operations on the secondary Hochschild cohomology},
   journal={Homology Homotopy Appl.},
   volume={17},
   date={2015},
   number={1},
   pages={129--146},

}
	
\end{biblist}

\end{bibdiv}

\end{document}